\def\ODEstateFB{\ODEstate^{\text{\tiny\sf FB}}}
\def\thetaopt{\theta^{\text{\tiny\sf opt}}}
\def\intI{\text{\rm\tiny I}}
\def\haUpupsilon{\widehat{\Upupsilon}}
\def\haf{\hat{f}}
\def\fq{f^\text{\rm\tiny $1$Q}}
\def\fqq{f^\text{\rm\tiny $2$Q}}
\def\tilqsaprobe{\widetilde{\qsaprobe}}
\def\clD{\mathcal{D}}
\def\Df{\clD^f}
\newtheorem{theorem}{Theorem}[section]
\newtheorem{lemma}[theorem]{Lemma} 
\newtheorem{proposition}[theorem]{Proposition} 
\newtheorem{corollary}[theorem]{Corollary}
\newlength{\noteWidth}
\long\def\notes#1{\ifinner
	{\footnotesize #1}
	\else 
	\marginpar{\parbox[t]{\noteWidth}{\raggedright\tiny#1}}  
	\fi\typeout{#1}}
\def\notes#1{\typeout{check notes!!!}}   
\def\rd#1{{\color{red}#1}}
\Crefname{corollary}{Corollary}{Corollaries}
\Crefname{eqnarray}{eq.}{eqs.}
\Crefname{equation}{eq.}{eqs.}
\Crefname{figure}{Fig.}{Figs.}
\Crefname{tabular}{Tab.}{Tabs.}
\Crefname{table}{Tab.}{Tabs.}
\Crefname{proposition}{Prop.}{Propositions}
\Crefname{theorem}{Thm.}{Thms.}
\Crefname{definition}{Def.}{Defs.} 
\Crefname{section}{Section}{Sections}
\Crefname{lemma}{Lemma}{Lemmas}
\Crefname{assumption}{Assumption}{Assumptions}
\def\urls#1{{\footnotesize\url{#1}}}
\def\mindex#1{\index{#1}}
\def\ocp{*}   
\DeclareFontFamily{U}{mathx}{\hyphenchar\font45}
\DeclareFontShape{U}{mathx}{m}{n}{<-> mathx10}{}
\DeclareSymbolFont{mathx}{U}{mathx}{m}{n}
\DeclareMathAccent{\widebar}{0}{mathx}{"73}
\def\barUpupsilon{\widebar{\Upupsilon}}
\newcommand{\qsaprobe}{{\scalebox{1.1}{$\upxi$}}}  
\newcommand{\bfqsaprobe}{{\scalebox{1.1}{$\bm{\upxi}$}}}  
\def\Lip{L}  
\def\Obj{\Upgamma}  
\def\ODEstate{\Uptheta} 
\def\barODEstate{\widebar{\Uptheta}}
\def\ODEstatePR{\ODEstate^{\text{\tiny\sf PR}}}
\def\ODEstatePRm{\ODEstate^{\text{\tiny\sf PR$-$}}}
\def\odestate{\upvartheta}
\newcommand{\bbblot}{\raise1pt\hbox{\vrule height .4ex width .4ex depth .05ex}}
\long\def\defbox#1{\framebox[.9\hsize][c]{\parbox{.85\hsize}{%
\parindent=0pt
\baselineskip=12pt plus .1pt      
\parskip=6pt plus 1.5pt minus 1pt 
 #1}}}
\long\def\beginbox#1\endbox{\subsection*{}%
\hbox{\hspace{.05\hsize}\defbox{\medskip#1\bigskip}}%
\subsection*{}}
\def\endbox{}
 \def\archival#1{} 
\def\FRAC#1#2#3{\genfrac{}{}{}{#1}{#2}{#3}}
\def\ddt{{\mathchoice{\FRAC{1}{d}{dt}}%
{\FRAC{1}{d}{dt}}%
{\FRAC{3}{d}{dt}}%
{\FRAC{3}{d}{dt}}}}
\def\ddtp{{\mathchoice{\FRAC{1}{d^{\hbox to 2pt{\rm\tiny +\hss}}}{dt}}%
{\FRAC{1}{d^{\hbox to 2pt{\rm\tiny +\hss}}}{dt}}%
{\FRAC{3}{d^{\hbox to 2pt{\rm\tiny +\hss}}}{dt}}%
{\FRAC{3}{d^{\hbox to 2pt{\rm\tiny +\hss}}}{dt}}}}
\def\ddyp{{\mathchoice{\FRAC{1}{d^{\hbox to 2pt{\rm\tiny +\hss}}}{dy}}%
{\FRAC{1}{d^{\hbox to 2pt{\rm\tiny +\hss}}}{dy}}%
{\FRAC{3}{d^{\hbox to 2pt{\rm\tiny +\hss}}}{dy}}%
{\FRAC{3}{d^{\hbox to 2pt{\rm\tiny +\hss}}}{dy}}}}
\def\half{{\mathchoice{\FRAC{1}{1}{2}}%
{\FRAC{1}{1}{2}}%
{\FRAC{3}{1}{2}}%
{\FRAC{3}{1}{2}}}}
\def\darrow{\buildrel{\rm d}\over\longrightarrow}
\def\tr{{\rm tr\, }}
\def\limsup{\mathop{\rm lim{\,}sup}}
\def\bfmath#1{{\mathchoice{\mbox{\boldmath$#1$}}%
{\mbox{\boldmath$#1$}}%
{\mbox{\boldmath$\scriptstyle#1$}}%
{\mbox{\boldmath$\scriptscriptstyle#1$}}}}
\def\bfPhi{\bfmath{\Phi}}
\def\bfxi{\bfmath{\xi}}
 \def\bfzeta{\bfmath{\zeta}}
\def\bfma{\bfmath{a}}
\def\bfmY{\bfmath{Y}}
\def\bfmhhaY{\bfmath{\hhaY}} 
\def\bfmhhaY{\hbox to 0pt{$\widehat{\bfmY}$\hss}\widehat{\phantom{\raise 1.25pt\hbox{$\bfmY$}}}}
\def\haf{{\hat f}}
\def\hah{{\hat h}}
\def\haA{\widehat A}
\def\tilA{\tilde{A}}
\def\tilf{\tilde f}
\def\clN{{\cal N}}
\def\clW{{\cal W}}
\def\eqdef{\mathbin{:=}}
\def\Expect{{\sf E}}
\def\lgmath#1{{\mathchoice{\mbox{\large #1}}%
{\mbox{\large #1}}%
{\mbox{\tiny #1}}%
{\mbox{\tiny #1}}}}
\def\Zero{{\mathchoice{\lgmath{\sf 0}}%
{\mbox{\sf 0}}%
{\mbox{\tiny \sf 0}}%
{\mbox{\tiny \sf 0}}}}
 \def\epsy{\varepsilon}
\def\varble{\,\cdot\,}
\def\formtmp#1#2{{\vskip12pt\noindent\fboxsep=0pt\colorbox{#1}{\vbox{\vskip3pt\hbox to \textwidth{\hskip3pt\vbox{\raggedright\noindent\textbf{#2\vphantom{Qy}}}\hfill}\vspace*{3pt}}}\par\vskip2pt%
\noindent\kern0pt}}
\titleformat\subparagraph[runin]
\titlespacing\subparagraph{0pt}
                       {.1ex minus 0.2ex}
                       {.75em}
\def\barf{{\widebar{f}}}
\def\barh{{\overline {h}}}
\def\barA{{\bar{A}}}
\def\barY{{\bar{Y}}}
\def\barSigma{\overline{\Sigma}}
\def\ass(#1:#2){(#1\ref{#1:#2})}
\def\ritem#1{
\item[{\sf \ass(\current_model:#1)}]
}
\newenvironment{recall-ass}[1]{%
\begin{description}
\def\current_model{#1}}{
\end{description}
}
\def\sq{\hbox{\rlap{$\sqcap$}$\sqcup$}}
\def\qed{\ifmmode\sq\else{\unskip\nobreak\hfil
\penalty50\hskip1em\null\nobreak\hfil\sq
\parfillskip=0pt\finalhyphendemerits=0\endgraf}\fi}
\newcommand{\blot}{\vrule height 1.1ex width .9ex depth -.1ex }
\def\qedb{\ifmmode\blot\else{\vspace{-.2cm}\unskip\nobreak\hfil
\penalty50\hskip1em\null\nobreak\hfil\blot
\parfillskip=0pt\finalhyphendemerits=0\endgraf}\fi}
\newtheoremstyle{example}{15pt}{20pt}%
     {}
     {}
     {\bfseries}
     {}
     {1pt}
     {\thmname{#1}\thmnumber{ #2.}~\thmnote{\textit{\textbf{#3}}}%
     \\[.15cm]\unskip\nobreak}
\newcounter{rmnum}
\newenvironment{romannum}{\begin{list}{{\upshape (\roman{rmnum})}}{\usecounter{rmnum}
\setlength{\leftmargin}{18pt}
\setlength{\rightmargin}{8pt}
\setlength{\itemindent}{2pt}
}}{\end{list}}
\newcounter{anum}
\newenvironment{alphanum}{\begin{list}{{\upshape (\alph{anum})}}{\usecounter{anum}
\setlength{\leftmargin}{18pt}
\setlength{\rightmargin}{8pt}
\setlength{\itemindent}{2pt}
}}{\end{list}}
\newcommand{\field}[1]{\mathbb{#1}}
\def\Re{\field{R}} 
\def\intgr{\field{Z}}
\def\nat{\field{Z}_+}
\def\Co{\field{C}}
\def\Expect{{\sf E}}
\def\transpose{{\intercal}}
\def\epsy{\varepsilon}
\def\varble{\,\cdot\,}
\def\haY{\widehat{Y}}
\def\hhaY{\hbox to 0pt{$\haY$\hss}\widehat{\phantom{\raise 1.25pt\hbox{Y}}}}
\def\haA{\widehat A}
\def\haS{{\widehat S}}
\def\haY{\widehat Y}
\def\bfPhi{\bfmath{\Phi}}
\newlength{\dhatheight}
    \newcommand{\doublehat}[1]{%
    \settoheight{\dhatheight}{\ensuremath{\hat{#1}}}%
    \addtolength{\dhatheight}{-0.3ex}%
    \hat{\vphantom{\rule{1pt}{\dhatheight}}%
    \smash{\hat{#1}}}}
\def\hahaf{\doublehat{f}}
\def\hahah{\doublehat{h}}
\def\tilUpupsilon{\widetilde{\Upupsilon}}
\def\barUpupsilon{\widebar{\Upupsilon}}
\def\tilXi{\widetilde{\Xi}}
\def\prstate{\Upomega}
\def\qsaDyn{\text{H}}
\def\Sigmaqsa{\Sigma_{\text{\tiny$\qsaprobe$}}}
\def\tilh{\tilde{h}}
\newcommand\gobblepars{%
    \@ifnextchar\par%
 {\expandafter\gobblepars\@gobble}%
{}}
\def\wham#1{\smallbreak\pagebreak[3]%
\noindent\textbf{#1}\ \ \gobblepars}
 \def\witem{\wham{\small$\triangle$}}
\title{Extremely Fast Convergence Rates for 
\\
Extremum Seeking Control with Polyak-Ruppert Averaging}
\author{%
	Caio Kalil Lauand \\
	University of Florida\\
	\texttt{caio.kalillauand@ufl.edu} \\[.5em]
	Sean Meyn\\
	University of Florida\\
	\texttt{meyn@ece.ufl.edu} \\
}
\begin{document}

\clearpage
	
	\setcounter{page}{1}

	\maketitle

	\begin{abstract}
Stochastic approximation is a foundation for many algorithms found in machine learning and optimization.   It is in general slow to converge:  the mean square error vanishes as $O(n^{-1})$.    A deterministic counterpart known as quasi-stochastic approximation is a viable alternative in many applications, including gradient-free optimization and reinforcement learning.   It was assumed in prior research that the optimal achievable convergence rate is $O(n^{-2})$.    It is shown in this paper that through design it is possible to obtain far faster convergence,   of order $O(n^{-4+\delta})$,  with $\delta>0$  arbitrary.   
Two techniques are introduced for the first time to achieve this rate of convergence.   The theory is also specialized within the context of gradient-free optimization, and tested on standard benchmarks.  The main results are based on a combination 
of novel application of 
results from number theory and techniques adapted from stochastic approximation theory.

\smallskip
\noindent
Financial support from ARO award W911NF2010055
and National Science Foundation award EPCN 1935389
is gratefully acknowledged

\noindent
AMS-MSC Codes: 62L20, 34C29
%

	\end{abstract}

\clearpage	
		\tableofcontents

\clearpage

	\section{Introduction}
	\label{s:Intro}
	Stochastic approximation (SA) was   introduced in the seminal work of Robbins and Monro \cite{robmon51a}. The goal is to solve the root finding problem $\barf(\theta^\ocp) =0$, in which $\barf\colon\Re^d\to\Re^d$ is of the form 
\begin{equation}
	\barf(\theta) \eqdef \Expect[f(\theta,\zeta)]
	\label{e:barf}
\end{equation}
where $\zeta$ is a random vector taking values in $\Re^m$.  The basic algorithm is expressed as the $d$-dimensional recursion,
\begin{equation}
	\theta_{n+1} = \theta_{n}+ \alpha_{n+1} f(\theta_n,\zeta_{n+1}) \, , \quad n\ge 0,
	\label{e:SA_recur}
\end{equation}
in which $\{\alpha_n\}$ is the   step-size sequence, and $\zeta_{n+1} \darrow \zeta$ as $n \to \infty$  (convergence  in distribution).  
SA theory has attracted a great deal of attention over the past twenty years, motivated in large part by applications to reinforcement learning and optimization   \cite{tsi94a,kontsi04,bacmou11,karbha18,bor20a}. 


Convergence theory is couched in the \textit{ODE Method} in which trajectories of \eqref{e:SA_recur} are compared to solutions of the ODE $\dot \odestate = \barf(\odestate)$ (the \textit{mean flow}).   The major assumption required to ensure convergence to $\theta^\ocp$, for each initial condition  $\theta_0 \in \Re^d$,  is that  the mean flow is globally asymptotically stable---see  \cite{bor20a}  for minimal assumptions on the ``noise sequence'' $\bfzeta$.   
Establishing sharp rates of convergence is a far greater challenge.   There is however a rich theory available to achieve the optimal rate of convergence for the mean-square error (MSE), which is in general   $\Expect[ \| \theta_n- \theta^\ocp\|^2] = O(n^{-1})$.

There are many applications for which the designer of the algorithm also designs the noise.   Notable examples include the introduction of exploration in reinforcement learning or gradient-free optimization.  This motivates the use of \textit{quasi-stochastic approximation} (QSA) in which the sequence $\bfzeta$ is deterministic (e.g. mixtures of sinusoids or pseudo-random numbers).  The idea was introduced in \cite{lappagsab90,larpag12}, but has a much longer history in the context of gradient-free optimization---see \cite{EShistory2010,liukrs12} for a survey of \textit{extremum seeking control} (ESC).

Theory supporting rates of convergence of nonlinear QSA appeared only recently \cite{chedevbermey21,CSRL}.   Analysis and algorithms are posed in continuous time to simplify analysis.   This setting is also motivated by recent success stories justifying algorithm design in continuous time, followed by a careful translation to obtain a discrete time algorithm.   See for example theory surrounding acceleration methods of  Polyak and Nesterov   \cite{attgoured00,suboycan14,kovstu21}.

The notation adopted in \cite[Chap.~4]{CSRL}  will be used here:   the \textit{QSA ODE} is defined as 
\begin{equation}
	\ddt\ODEstate_t = a_t f(\ODEstate_t,\qsaprobe_t) 
	\label{e:QSAgen}
\end{equation}
The deterministic continuous time process $\bfqsaprobe$ will be called the \textit{probing signal}, and
plays the role of $\bfzeta$ in SA;    $\bfma$ is called the gain process.   The motivation for QSA is two-fold:
\begin{romannum}
	\item
	It will be seen that the rate of convergence is far faster than SA,  subject to careful choice of algorithm architecture.
	
	\item 
	In on-line applications the introduction of independent noise may not be realizable, or may impose unnecessary stress on equipment.  In QSA design the components of the probing signal might be chosen to be  sinusoidal signals of appropriate frequency and magnitude to ensure learning takes place without stress on physical devices. 
\end{romannum}

\textit{What is the optimal rate of convergence for QSA?}     Consider the most basic one-dimensional problem in which $ f(\ODEstate_t,\qsaprobe_t)   =  -  \ODEstate_t  +  \qsaprobe_t$,  with $\bfqsaprobe$ a zero-mean signal.    The special case $a_t = 1/(1+t)$ results in an approximate average:
\begin{equation}
	\ODEstate_T =  \frac{1}{1+T}  \ODEstate_0  +     \frac{1}{1+T} \int_0^T  \qsaprobe_t\,  dt
	\label{e:QSAfallacy}
\end{equation}
If for example $\qsaprobe_t  =   \sin(\omega t)$ then the right hand side converges at rate $O(T^{-1})$,  which translates to $O(T^{-2})$ for the ``MSE''; this is far faster than the rate $O(T^{-1})$ expected for SA.    

\begin{wrapfigure}[9]{r}{0.4\textwidth}
	
	\vspace{-.75em}
	\includegraphics[width=0.95\hsize]{./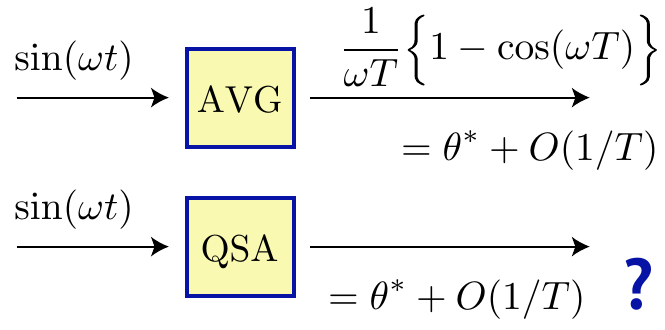}
	\caption{What is the optimal convergence rate for QSA?}\label{fig:diagram}
\end{wrapfigure}

The special case of pure averaging is illustrated at the top in \Cref{fig:diagram}.   The bold question mark in the figure refers to a question regarding a natural extension of the linear example in \eqref{e:QSAfallacy}:   \textit{can we obtain the same rate of convergence for general non-linear QSA?}  This question is posed and answered in the affirmative in \cite[\S~4.9]{CSRL}, achieved  through the averaging technique of Polyak and Ruppert,  but only under conditions on the QSA ODE that could not be verified a~priori. The current paper not only provides ways to ensure such conditions are always met, but also answers this question in a much more optimistic manner: the presumption that MSE rates of order $O(T^{-2})$ are optimal for general QSA is \textit{fallacy} since for any $\delta>0$, rates of order  $\| \ODEstate_T - \theta^\ocp\|^2 = O(T^{-4+\delta})$ can be achieved through design.

The astonishingly fast MSE rates obtained for general QSA are a result of the \textit{perturbative mean flow} (p-mean flow) representation for the QSA ODE. This representation expresses \eqref{e:QSAgen} in terms of the average vector field $\barf$,
\begin{align}
	\ddt \ODEstate_t    = 
	a_t[ \barf (\ODEstate_t) & 
	-   a_t 	\barUpupsilon_t   
	+ \clW_t] \,,  \qquad \clW_t =   \sum_{i =0}^2 a^{2-i}_t \frac{d^i}{dt^i} \clW^i_t
	\label{e:BigGlobalODEvanishing}
\end{align}
where $\{\clW_t^i : i=0,1,2\}$ are smooth functions of  a larger state process,   and $\barUpupsilon_t = \barUpupsilon(\ODEstate_t)$ with  $\barUpupsilon$ also smooth (see \Cref{t:P-meanflow}).

This representation is valuable only after boundedness of $ \{\ODEstate_t\}$ is established, for which sufficient conditions are provided in    \cite[Prop. 4.33 and 4.34]{CSRL}  (these conditions form part of Assumption (QSA3) in the supplementary material).    
A linearization of \eqref{e:BigGlobalODEvanishing} around $\theta^*$ gives
\begin{equation}
	\ddt \ODEstate_t    =  
	a_t  [ A^*(\ODEstate_t-\theta^*)
	- a_t \barUpupsilon^*
	+\clW_t + O(\| \ODEstate_t-\theta^* \|^2)] 
	\label{e:BigGlobalODElin}
\end{equation}
where $A^* = \partial \barf(\theta^*)$ and $\barUpupsilon^* = \barUpupsilon(\theta^*)$.    The upper bound $\|\ODEstate_t - \theta^*\| = O(a_t)$ (previously obtained in \cite{chedevbermey21,CSRL}) follows easily from \eqref{e:BigGlobalODEvanishing}.  

\notes{CL: are we coining a name to $\barUpupsilon^*$? Noise multiplication/multiplicative index ,  }

\begin{figure}[h]
	\includegraphics[width=1\hsize]{./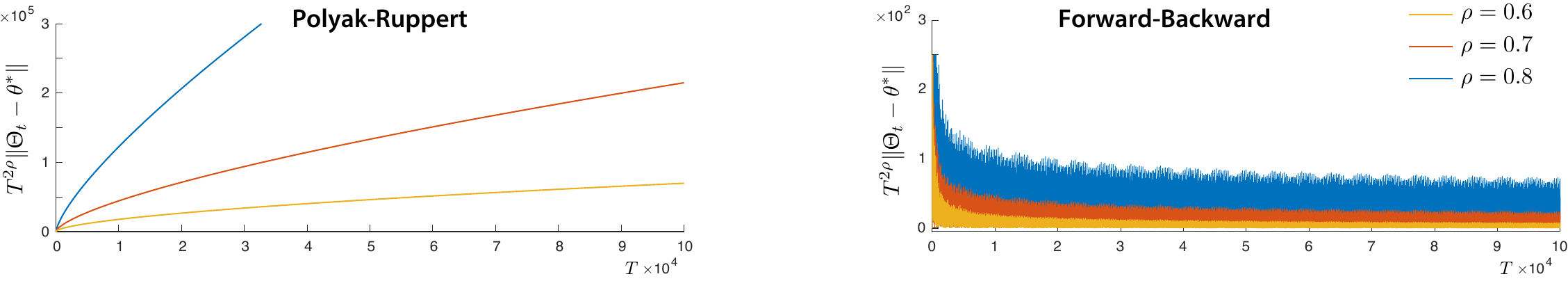}
	\caption{Rates of convergence for PR averaging and forward-backward algorithms.}
	\label{fig:comp_rho_ROC}
\end{figure} 

Given estimates $\{\ODEstate_t : t\ge 0\}$,   Polyak-Ruppert (PR) averaging defines the new estimates as follows,
\begin{equation}
	\ODEstatePR_T  = \frac{1}{T-T_0} \int_{T_0}^T  \ODEstate_t \,  dt \,, \quad T> T_0\,,  
	\label{e:PR_def}
\end{equation}
where the interval $[0,T_0]$ is known as the burn-in period.   The time-average of $\{\clW_t\}$ is of order $O(a_T^2)$, implying the rate $\| \ODEstatePR_T - \theta^\ocp\|^2 = O(T^{-4+\delta})$   if and only if $\barUpupsilon^* = 0$.

This brings us to the main contributions:
\begin{romannum}
	\item   The p-mean flow representation \eqref{e:BigGlobalODEvanishing}   is introduced for the first time in \Cref{t:P-meanflow}.
	
	\item   Convergence rates arbitrarily close to $O(T^{-2})$ can be obtained (so that the MSE is arbitrarily close to $O(T^{-4})$),  
	which is far faster than the bound  $O(T^{-1})$ assumed in prior research.
	
	\item The near quartic rates for MSE are established for PR averaging in \Cref{t:Couple_main}, subject to the assumption that $\barUpupsilon^*=0$.    
	Sufficient conditions on the probing signal  $\bfqsaprobe$ are provided in \Cref{t:P-meanflow} to ensure that  $\barUpupsilon^*=0$.

	\item An alternative to PR averaging is introduced:    \textit{forward-backward filtering}     achieves near quartic rates without the special conditions imposed in  \Cref{t:Couple_main}    (see \Cref{t:FB_ROC}).
	
\end{romannum}

The theory is refined in the context of gradient free-optimization,   and the general theory is illustrated through numerical examples in this setting.

\begin{wrapfigure}[14]{r}{0.375\textwidth}
	
	\vspace{-.95em}

	\includegraphics[width=0.95\hsize]{./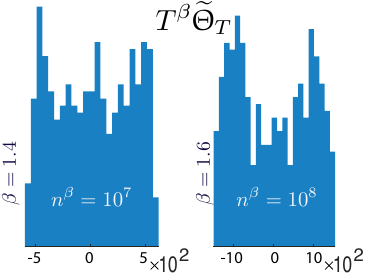}
	\caption{Quasi Monte Carlo using QSA with  Polyak-Ruppert averaging.  \\
		Histograms for  $\protect n=10\times T =10^5 $.}
	\label{fig:PRQSA}
\end{wrapfigure}

While theory is developed in continuous time,  simulation studies using   Euler approximations are consistent with theory.   The plots shown in \Cref{fig:comp_rho_ROC} are based on a two-dimensional example, whose details can be found in \Cref{s:QSA_intro}.  
The change of notation is also explained there:  $(2\rho)^2 =4-\delta$,  where $\rho$ is a design parameter subject to $\rho<1$.  
The forward-backward algorithm is a new algorithm introduced in this paper, that achieves the $ O(t^{-4+\delta})$ convergence rate;   we find in these experiments that the same rates hold for the Euler approximation. 
Polyak-Ruppert averaging \eqref{e:PR_def}  cannot obtain this fast rate of convergence,  because the frequencies used in this experiment violate a critical assumption in \Cref{t:Couple_main}.

\Cref{fig:PRQSA} illustrates how the techniques introduced in this paper specialize to quasi-Monte Carlo (QMC), based on two instances of the algorithm: one designed using  $\beta = \sqrt{4-\delta}=1.4$, and the other with $\beta=1.6$.  Details are found in the supplementary material.

\subsection{Preliminaries}
\label{s:Preliminaries}

A short literature survey and notation glossary are provided here. 

\wham{Literature review:}    Polyak-Ruppert (PR) averaging was introduced in \cite{rup88,pol90,poljud92}, and is now a standard workhorse in machine learning  \cite{frapol20,bacmou11,moujunwaibarjor20,CSRL}.    

Quasi-Monte Carlo (QMC) remains an active area of research for applications to estimation and optimization \cite{owegly18}.  
One aspect of the theory concerns techniques to construct a sequence $\{\zeta_i  :  i \in \nat  \} \subset  [0,1]^K$   so that sample path averages are convergent,  $n^{-1} \sum_{i=1}^n g(\zeta_i) \to   \int g(x)\, \mu(dx)$ as $n\to\infty$ for functions $g\colon\Re^K\to\Re$, with $\mu$ uniform;   theory predicts a convergence rate of $O(\log(n)^K /n)$  (see e.g.\  \cite[Section 9.3]{asmgly07}).

It is shown in \Cref{t:BakerCor} of the present paper that the convergence rate $O(1/n)$ holds for QMC,  subject to a smoothness condition on the function,  and careful design of the probing signal. \textit{The main results  of this paper hinge on these new bounds for QMC, and further refinements}.  These  bounds are based in part on refinements of Baker's Theorem \cite{mat00,bug18}.

QSA was first applied to finance in \cite{lappagsab90,larpag12}.
Stability and convergence theory for QSA is recent   \cite{chedevbermey21,CSRL},  following preliminary results in  \cite{berchecoldalmehmey19b};  this work was motivated by applications to extremum seeking control,  and successful application of QSA to Q-learning in \cite{mehmey09a}.

The linear QSA ODE is treated in  \cite{shimey11}, where the first bounds on the rate of convergence of order $O(a_t)$ were obtained.    This bound was extended to general nonlinear QSA in   \cite{chedevbermey21,CSRL},  and it was also shown that the use of PR averaging results in a convergence rate of   $O(T^{-1})$ subject to (up to now) unverifiable conditions.   

QSA with fixed step-size is the topic of the contemporaneous work  \cite{laumey22b,laumey22d}.   An analog of the p-mean flow representation \eqref{e:BigGlobalODEvanishing} is obtained, along with both steady-state and   transient error bounds

\begin{subequations}
	
	Gradient-free optimization (GFO) concerns minimization of   an objective function $\Obj: \Re^d \to \Re$ based solely on measurements of $\Obj(\theta)$ for selected values of $\theta\in\Re^d$.    
	The algorithms of Keifer and Wolfowitz   are early examples  \cite{kiewol52,bhaprapra13,bhabor03,gos15,asmgly07}.     The QSA ODEs for GFO considered in this paper are inspired by the   simultaneous perturbation stochastic approximation (SPSA) algorithms of   Spall  \cite{spa87, spa92, spa97}.
	Two examples are  of the form \eqref{e:SA_recur}, with
	\begin{align}
		\text{\sf 1SPSA:} \qquad 
		f^{1\text{SPSA}}(\theta_n,\zeta_{n+1}) &= -\frac{1}{\epsy} 
		\zeta_{n+1} \Obj(\theta_n + \epsy \zeta_{n+1})
		\\
		\text{\sf 2SPSA:} \qquad 
		f^{2\text{SPSA}}(\theta_n,\zeta_{n+1}) &=-
		\frac{1}{2\epsy} \bigl( \zeta_{n+1}[\Obj(\theta_n + \epsy \zeta_{n+1}) -\Obj(\theta_n - \epsy \zeta_{n+1})]  \bigr)
	\end{align}
	where $\epsy>0$ and $\{\zeta_n\}$ is a zero-mean i.i.d. sequence.
	
	\label{e:SPSA_Gen}
\end{subequations}

These algorithms are biased in general unless the probing gain $\epsy$ is a vanishing function of $n$.   There is substantial research in this setting:   the best possible convergence rate for the mean square error is  $O(n^{-\beta} )$ with  $\beta =  (p-1)/( 2p) $, provided the  objective function is $p$-fold differentiable at $\theta^\ocp$ \cite{poltsy90}.   Upper bounds appeared earlier in \cite{fab68b}.
See \cite{dipren97,dip03,spa12,pasgho13,larmenwil19} for more recent history.

Deterministic versions of SPSA were analyzed in \cite{bhafumarwan03} without sharp rates of convergence.   The present paper follows the approach of  \cite[\S~4.9]{CSRL}, in which the applications to gradient-free optimization result in algorithms that resemble simple versions of the ESC algorithms surveyed in \cite{EShistory2010,liukrs12}  (see \cite{laumey22d} for more on the relationship between ESC and QSA).

\wham{Notation:}   

We restrict to probing signals that are nonlinear functions of sinusoids, of the form
\begin{equation}
	\qsaprobe _t  = G_0(\qsaprobe^0_t) \quad 
	\text{with} 
	\quad  
	\qsaprobe^0_t = [\cos(2\pi[\omega_1 t + \phi_1 ] ), \cdots , \cos(2\pi[\omega_K t + \phi_K ] ) ]^\transpose
	\label{e:qSGD_probe0}
\end{equation}
for which 
$G_0: \Re^K \to \Re^m$ is analytic on $\Co^K$, $\{ \phi_i \}$ are arbitrary, and assumptions on the 
distinct 
frequencies $\{\omega_i\}$ will be imposed in the main results. When $G_0$ is linear we obtain the mixture of sinusoids
\begin{equation}   
	\qsaprobe _t  = \sum_{i=1}^{K_\bullet} v^i \cos(2 \pi [\omega_i t + \phi_i])\, , 
	\quad 
	\text{ for vectors $\{v^i\} \subset \Re^m$}
	\label{e:qSGD_probe}
\end{equation}
Denote $\Phi_t^i =   \exp(2\pi j[\omega_i t + \phi_i])$ for each $i$ and $t$,   which defines the $K$-dimensional vector-valued
$\Phi_t$     function of time evolving on $\prstate\eqdef \{ z \in\Co^K :  |z_i| = 1\,, \ 1\le i\le K\}$.      Writing  $ G(z) \eqdef G_0(  (z+ z^{-1})/2) $ for non-zero  $ z\in \Co^K$, we obtain $\qsaprobe_t   = G(\Phi_t ) $.

For two real-valued functions of time $\{\upzeta^i_t : t\ge 0\,, \ i=1,2 \}$ we denote
\begin{align}
	\langle \upzeta^1, \upzeta^2 \rangle   \eqdef   \lim_{T \to \infty}\frac{1}{T} \int_0^T   \upzeta^1_t \upzeta^2_t  \,  dt   \,,	
	\qquad
	\langle \upzeta^1 \rangle  \eqdef   \lim_{T \to \infty}\frac{1}{T}\int_0^T  \upzeta^1_t \,  dt \,,
	\qquad
	\tilde{\upzeta}^i_t = \upzeta^i_t-\langle \upzeta^i \rangle
	\label{e:spath-notation}
\end{align}
The notation is extended to vector or matrix valued functions.  In particular,  $ \langle \qsaprobe \rangle =\Zero\in\Re^m$,
and  $\Sigmaqsa \eqdef \langle\qsaprobe\qsaprobe^\transpose \rangle $ is an $m \times m$  matrix.

The functions of time used in \eqref{e:spath-notation}
are frequently obtained as functions of $(\ODEstate_t,\qsaprobe_t)$  [or more generally, $(\ODEstate_t,\Phi_t)$].
If $h\colon\Re^d\times\Re^m \to \Re$ we often write  $h_t \equiv h(\ODEstate_t,\qsaprobe_t)$ and $h^*_t \equiv h(\theta^*,\qsaprobe_t) $ to save space, and use the alternative notation  $\barh \eqdef \langle h \rangle$;   
identical notation is used for functions of $(\ODEstate_t,\Phi_t)$.

Let $\hah$ denote a solution of \textit{Poisson's equation} \eqref{e:Poissoneqdef} with forcing function $h\colon\Re^m\to\Re$,
which satisfies the defining equations, 
\begin{equation}
	\begin{aligned}
		\hah(\Phi_T)= -\int^T_0 \tilh(\qsaprobe_t) \,  dt + \hah(\Phi_0)  \,,\quad T\ge 0\,,\  \Phi_0\in\prstate
	\end{aligned} 
	\label{e:Poissoneqdef}
\end{equation}
It is assumed throughout that the solution is normalized so that its mean is zero.
\Cref{t:BakerCor} in the supplementary material provides conditions ensuring a smooth solution.

For a continuously differentiable ($C^1$) function $g\colon \Re^d \times \prstate \to \Re$, the directional derivative in the direction of the QSA vector field   $f$ is denoted
\begin{equation}
	[\clD^f g](\theta,z) = \partial_\theta g\,  (\theta,z )   \cdot f (\theta, G(z)) \, , 
	\quad 
	(\theta, z) \in \Re^d \times \prstate  
	\label{e:direc_derivat}
\end{equation}

For a scalar-valued function $\delta_t>0$, we use the notation $O(\delta_t)$ to denote a 
function of $t$;  the notation indicates that there is a constant $B$ such that $ \| O(\delta_t) \|  \leq B \delta_t$ for all $t$.   That is, we are only asserting an \textit{upper bound}.   For example, if $\delta_t =1/(1+t)$ we are not claiming that  $\| O(\delta_t) \| \leq B \delta_t^2$  is not possible.

\wham{Organization:}   \Cref{s:PRF}   summarizes  the main contributions of the paper, and
\Cref{s:GFO} provides illustrations of the theory in application to GFO; in particular, convergence rate bounds are validated for GFO algorithms based on QSA.  Conclusions and directions for future research are summarized in  \Cref{s:conc}. Technical proofs and additional numerical results can be found in the supplemental material.

\section{Towards Quartic Rates} 
\label{s:PRF}

\subsection{Quasi-Stochastic Approximation}
\label{s:QSA_intro}

We begin with foundations regarding the QSA ODE  \eqref{e:QSAgen} from \cite[Ch. 4.5, 4.9]{CSRL}. 
The \textit{mean vector field}  is defined by the sample path average,
\begin{align}
	\barf(\theta) &\eqdef \lim_{T \to \infty} \frac{1}{T} \int^T_0 f(\theta,\qsaprobe_t)\, dt \,, \quad \theta\in\Re^d
	\label{e:ergodicA1}
\end{align}
Theory is based on comparison with the ODE,
\begin{equation}
	\ddt\barODEstate_t = a_t \barf(\barODEstate_t)\, , 
	\quad 
	\text{for $t\geq t_0$ and $\barODEstate_{t_0} = \ODEstate_{t_0}$.}
	\label{e:barQSA}
\end{equation}
The time $t_0$ is fixed, but chosen suitably large in analysis of convergence rates.

The full list of assumptions  (QSA0)--(QSA5) may be found in the supplementary material.  We settle for a brief summary here: (QSA0) specifies conditions on the frequencies in \eqref{e:qSGD_probe0}.  The assumptions allow the following special case:   
fix $\omega_1>0$ and an increasing sequence of positive integers $\{n_i\}$, and choose the probing signal of the form 	\eqref{e:qSGD_probe0}  with 
\begin{equation}
	\omega_i = n_i \omega_1     \ \ \text{and} \ \  \phi_i = n_i \phi_1  \,, \quad 2 \leq i \leq K_\bullet  
	\label{e:specialcase_LogFreqSpan}
\end{equation}
See \Cref{t:OneFrequency} for explanation.

(QSA1) concerns the gain process; in the body of the paper we take $a_t = a_0(1+t)^{-\rho}$ for $\rho \in (1/2,1]$ and a constant $a_0>0$.  (QSA2) imposes global Lipschitz bounds on $f$ and $\barf$. 
(QSA3) imposes global asymptotic stability of the mean flow  $\ddt \odestate_t  = \barf  ( \odestate_t  )$ and minor additional assumptions.
(QSA4):   $\barf$ is $C^1$,  $\barA(\theta) =\partial_\theta \barf\, (\theta)$ is bounded and Lipschitz continuous,  and  $A^\ocp = A (\theta^\ocp)$ is Hurwitz. 
(QSA5):   existence of functions $\haf$,  and $\haUpupsilon$ that are Lipschitz continuous in $\theta$ solving for each $ 0\le t_0\le t_1$,
\begin{equation}
	\begin{aligned} 
		\haf( \theta, \Phi_{t_0} ) &= 
		\int_{t_0} ^{t_1}  \tilf(\theta,\qsaprobe_t)    \, dt    + \haf( \theta, \Phi_{t_1} ) \, , 
		\qquad  
		\tilf(\theta,\qsaprobe_t) = f(\theta,\qsaprobe_t) -\barf(\theta) 
		\\	
		\hahaf( \theta, \Phi_{t_0} ) &=  
		\int_{t_0} ^{t_1}   \haf(\theta,\qsaprobe_t)   \, dt    + \hahaf( \theta, \Phi_{t_1} ) 
		\\	
		\haUpupsilon( \theta, \Phi_{t_0} ) &= 
		\int_{t_0} ^{t_1}   \tilUpupsilon(\theta,\Phi_t)   \, dt    + \haUpupsilon( \theta, \Phi_{t_1} )\, ,
		\qquad 
		\tilUpupsilon(\theta,\Phi_t) = \Upupsilon(\theta,\Phi_t) -\barUpupsilon(\theta)
	\end{aligned} 
	\label{e:hatDefs}
\end{equation}
where for $\haA(\theta,\qsaprobe) \eqdef \partial_\theta \haf \, (\theta,\qsaprobe)$,
\begin{equation}
	\begin{aligned} 
		\Upupsilon(\theta,\Phi_t) &= -\haA\, (\theta,\Phi_t) f(\theta,\qsaprobe_t) \, ,\qquad \barUpupsilon(\theta) = \lim_{T \to \infty} \frac{1}{T} \int^T_0 \Upupsilon(\theta,\Phi_t)\, dt
	\end{aligned} 
	\label{e:Upupsilon}
\end{equation}

If $A(\theta,\qsaprobe) = \barA(\theta)$ for all $\theta$ and $\qsaprobe$ then   $\barUpupsilon(\theta^*) =0$.  This is a very restrictive special case, though it does hold for QMC for which $A(\theta,\qsaprobe) = -I$. In broad generality, we can always ensure $\barUpupsilon(\theta^*) =0$ by designing an algorithm with $\bfxi$ satisfying (QSA0) as shown in part (ii) of \Cref{t:P-meanflow}.

The solutions to Poisson's equation in \eqref{e:hatDefs} are used   to define the terms in  \eqref{e:BigGlobalODEvanishing}:

\begin{subequations}

	\begin{theorem}
		\label[theorem]{t:P-meanflow}
		Suppose that (QSA1) holds and $a_t = (1+t)^{-\rho}$, with $\rho\in(0,1)$.
		\begin{romannum}
			\item Under (QSA5), the p-mean flow representation \eqref{e:BigGlobalODEvanishing} holds with
			\begin{align} 
				&\clW^0(\ODEstate_t,\Phi_t) = -[D^f \haUpupsilon](\ODEstate_t,\Phi_t) 
				+ \frac{r_t}{a_t} [D^f \hahaf](\ODEstate_t,\Phi_t) 
				\\
				&\clW^1(\ODEstate_t,\Phi_t) = -[D^f \hahaf](\ODEstate_t,\Phi_t)+\haUpupsilon(\ODEstate_t,\Phi_t)
				\\
				&\clW^2(\ODEstate_t,\Phi_t) = \hahaf(\ODEstate_t,\Phi_t)\,, 
				\qquad 
				\text{where } r_t = \rho/(t+1)
			\end{align}
			\item If (QSA0) and (QSA5) hold, then $\barUpupsilon(\theta) = 0$ for each $\theta \in \Re^d$.
			
			\item If (QSA2) and (QSA3) hold, then $\{\ODEstate_t\}$ is \textit{ultimately bounded}: there exists $b<\infty$ such that for any $\ODEstate_0$, $\limsup_{t \to \infty} \|\ODEstate_t \| \leq b$.  
		\end{romannum}  
	\end{theorem}  
	
	\label{e:BigGlobalODEvanishing_thm}
	
\end{subequations}

\textit{Proof:}
The proof of part (i) is given at the end of Section B.3, while part (iii)  follows from \cite[Prop. 4.33 and 4.34]{CSRL}.

\begin{wrapfigure}[9]{r}{0.375\textwidth}
	\vspace{-.5em}
	\includegraphics[width= \hsize]{./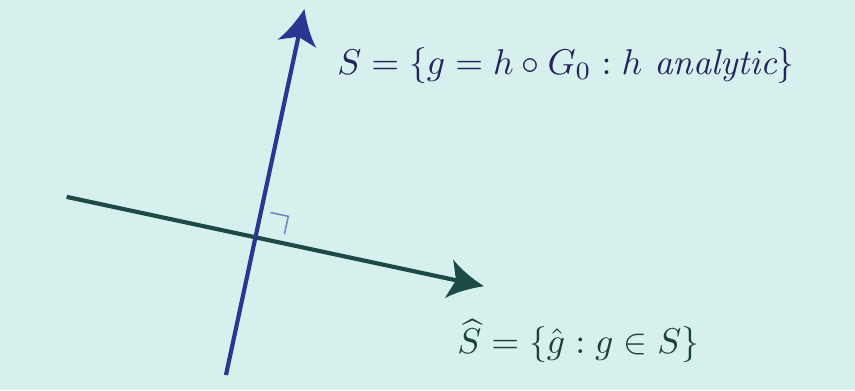}
	\caption{\textit{Hidden geometry}: orthogonality of the function classes $S$ and $\haS$. }
	\label{fig:orth}
\end{wrapfigure}

Part (ii) is based on the geometry illustrated in 	\Cref{fig:orth}:   $S$ denotes the set of functions $g\colon\Re^m\to\Re$ that are
analytic functions of the probing signal,   and $\haS$ the set of functions $h\colon\prstate\to\Re$ that have zero mean and solve Poisson's equation for some $g\in S$.
\Cref{t:hah-g-orth} establishes orthogonality of these two function classes   in $L_2$:    
$\langle g,h\rangle =0$  for each $g\in S$ and $h\in\haS$ (in the notation \eqref{e:spath-notation}),  which  indicates
\[
\lim_{T\to\infty} \int_0^T g(\qsaprobe_t) h(\Phi_t) \, dt = 0 \,, \ \  \text{for each $\Phi_0$}
\]
The definition \eqref{e:Upupsilon} gives 
\[
\Upupsilon_i(\theta,\Phi) =  -\sum_{j=1}^d \haA_{i,j}( \theta, \Phi)  f_j( \theta, \qsaprobe)\,,
\quad \text{  for each $\qsaprobe\in\Re^m$ and each $\Phi \in \prstate$.}
\]
Since $\haA_{i,j}(\theta,\varble)  \in \haS$ and $f_{j}(\theta,\varble)  \in S$ for each $i,j$ and $\theta$, it follows that $\barUpupsilon(\theta) = 0$. 
\qed

\subsection{Acceleration}
\label{s:fix_PR+FB}
Two approaches to obtain the convergence rate bounds $\| \ODEstate_T - \theta^\ocp\| = O(a_T^{2})$ are described here.

\wham{Polyak-Ruppert averaging}

Fast convergence can be obtained using standard averaging, but only under restrictive conditions.  In \Cref{t:Couple_main} we adopt the notation,
\begin{equation}
	\barY^* \eqdef  [A^\ocp]^{-1} \barUpupsilon^*,
	\text{ where } 
	\barUpupsilon^* =\barUpupsilon(\theta^*) \,,\qquad \haf^*_t \eqdef \haf(\theta^*,\Phi_t)\,,\ \ t\ge 0
	\label{e:barY}
\end{equation}
The proof of \Cref{t:Couple_main} is postponed to the supplementary material. 

\begin{theorem}
	\label[theorem]{t:Couple_main}
	Suppose that (QSA1)--(QSA5) hold, and the following additional assumptions are imposed:   $\{ \ODEstate_t\}$ is obtained using $a_t = (1+t)^{-\rho}$ with $\rho\in(1/2,1)$,  and estimates using PR averaging \eqref{e:PR_def} are obtained using  $T_0 =(1-1/{\kappa})T$ with $\kappa > 1$  fixed.   Then,
	\begin{subequations}
		\begin{align}
			\ODEstate_t &=  
			\theta^\ocp  +   a_t [\barY^*  -  \haf^*_t ] + o(a_t)
			\label{e:Couple_ODEstate} 
			\\ 
			\ODEstatePR_T &=  
			\theta^* + a_T [c(\kappa,\rho) + o(1)] \barY^*  + O(T^{-2\rho})
			\label{e:PRtheta}
			\\
			& \text{with } c(\kappa,\rho) = \kappa[1-(1-1/\kappa)^{1-\rho}]/(1-\rho)
			\nonumber
		\end{align}
		\label{e:newPR_rates}
	\end{subequations} 
\end{theorem}

\Cref{t:P-meanflow} and the definition \eqref{e:barY}  imply that $\barY^* = 0$ under (QSA0).     
Eq.~\eqref{e:PRtheta} of \Cref{t:Couple_main} then implies that PR averaging leads to a convergence rate bounded by $O(T^{-2\rho})$, subject to (QSA0)--(QSA5).

It was  discovered recently that the vector $\barUpupsilon^*$ also appears in representations of the SA recursion 		\eqref{e:SA_recur} when $\{\zeta_n\}$ is a smooth function of a Markov chain.  This introduces bias and  potentially large variance  
for fixed step-size algorithms  ($\alpha_{n+1} \equiv \alpha>0$), even when using averaging \cite[\S 2.5.3]{laumey22b},\cite{laumey22c}.

\wham{Forward-backward filtering}

We present a technique to achieve near quartic MSE convergence rate without imposing additional structure on the probing signal.   Motivation is two-fold:   first, it is hoped that this technique can be extended to SA for which there is no known analog to (QSA0);   second, in some applications of QSA it may not be possible to ensure that (QSA0) is satisfied.

Forward-backward (FB) filtering is defined by a pair of QSA ODEs:
\begin{align}
	\ddt \ODEstate^-_t = a_t f(\ODEstate^-_t,\qsaprobe^-_t) 
	\qquad
	\ODEstateFB_T = \half  [\ODEstatePR_T+\ODEstatePRm_T] 	
	\label{e:Backwards_QSA}
\end{align}
where $\qsaprobe^-_t \eqdef \qsaprobe_{-t}$,     $\ODEstatePR_T$ is defined in \eqref{e:PR_def},   and $\ODEstatePRm_T$ is defined analogously using $\{ \ODEstate^-_t\}$.


\begin{theorem}
	\label[theorem]{t:FB_ROC}
	If the assumptions of \Cref{t:Couple_main} hold and the frequencies $\{\omega_i\}$ in \eqref{e:qSGD_probe0} are distinct, then, $\ODEstateFB_T = \theta^{\ocp} + O(T^{-2\rho})$.  
\end{theorem}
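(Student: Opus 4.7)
The plan is to invoke Proposition~\ref{t:new_PR_ROC} twice---once for the forward ODE \eqref{e:QSAgen} and once for its time-reversed counterpart \eqref{subequation:Back_QSA_ODE}---and then exploit the fact that the bias vector $\barY$ flips sign under time reversal of the probing signal, so that the dominant $O(a_T)$ contribution cancels in the average \eqref{subequation:FB_result}.

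The backward signal $\qsaprobe^-_t = \qsaprobe_{-t}$ has the form \eqref{e:qSGD_probe} with phases $\phi_i$ negated, so the hypotheses (QSA1)--(QSA5) descend to the backward ODE, and the ergodic quantities $\barf$, $A^\ocp$, $\theta^\ocp$ are inherited from the forward process. Let $\barY^{-}$ denote the analogue of $\barY$ constructed from $\qsaprobe^-$ via \eqref{e:barY}. The crux is the identity $\barY^{-} = -\barY$, equivalently $\barUpupsilon^{-} = -\barUpupsilon$. Substituting the Poisson identity \eqref{e:hatDefs} into \eqref{e:barY} and using $\barf(\theta^\ocp)=0$ yields the compact form $\barUpupsilon = \langle \haA(\theta^\ocp,\Phi)\,f(\theta^\ocp,\qsaprobe)\rangle$. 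The corresponding Poisson solution for $\qsaprobe^-$ satisfies $\haA^{-}(\theta^\ocp,\Phi^-_t) = -\haA(\theta^\ocp,\Phi_{-t}) + C$, which one verifies directly by the substitution $s = -r$ in \eqref{e:hatDefs}. Therefore
\begin{equation*}
\barUpupsilon^{-} = \langle \haA^{-}(\theta^\ocp,\Phi^-)\,f(\theta^\ocp,\qsaprobe^-)\rangle = -\langle \haA(\theta^\ocp,\Phi_{-\cdot})\,f(\theta^\ocp,\qsaprobe_{-\cdot})\rangle = -\barUpupsilon,
\end{equation*}
where the constant $C$ drops out because $\barf(\theta^\ocp)=0$, and the last equality uses the time-reversal invariance of ergodic averages of almost periodic signals.

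Summing the two expansions produced by Proposition~\ref{t:new_PR_ROC} and using $\barY^{-} = -\barY$,
\begin{equation*}
\ODEstateFB_T = \theta^\ocp + \tfrac{1}{2}a_T[c(\kappa,\rho)+o(1)]\bigl(\barY + \barY^{-}\bigr) + O(T^{-2\rho}) = \theta^\ocp + O(T^{-2\rho}),
\end{equation*}
since $\barY + \barY^{-} = 0$ annihilates the $a_T$-order contribution. The principal obstacle is ensuring that this cancellation survives at the finer $O(T^{-2\rho})$ scale: a literal reading of Proposition~\ref{t:new_PR_ROC} formally leaves a residual $a_T \cdot o(1)$ piece of size $o(T^{-\rho})$, which exceeds $O(T^{-2\rho})$ when $\rho \in (1/2,1)$. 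Closing this gap requires revisiting the proof of Proposition~\ref{t:new_PR_ROC} and verifying that, once the $\barY$-proportional components of the decomposition $\mathcal{B}_T = [A^\ocp]^{-1}(\epsy^Y_T - \epsy^Z_T + \epsy^{\Upupsilon}_T + \epsy^a_T)$ cancel between forward and backward processes, the surviving pieces are bounded by $O(T^{1-2\rho})$, yielding $(\mathcal{B}_T + \mathcal{B}^{-}_T)/(2T) = O(T^{-2\rho})$.
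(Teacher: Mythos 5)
Your proposal is correct and takes essentially the same route as the paper: apply \Cref{t:new_PR_ROC} to the forward and time-reversed ODEs, prove $\barY^- = -\barY$ via a sign-flip of the Poisson solution (the paper's \Cref{t:Back_Poisson}, where the additive constant is eliminated by the mean-zero normalization rather than by $\barf(\theta^\ocp)=0$, though both arguments work), and then average. You also correctly flagged the one subtle point: the $o(1)$ factor in the statement of \Cref{t:new_PR_ROC} is insufficient as stated, and one must use the sharper form obtained inside its proof, namely $\mathcal{B}_T = -[A^\ocp]^{-1}\barY\,\rho\log(\kappa/(\kappa-1)) + O(T^{1-2\rho})$, whose $\barY$-linear part cancels between forward and backward so that $(\mathcal{B}_T+\mathcal{B}^-_T)/T = O(T^{-2\rho})$---which is exactly how the paper closes the argument.
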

\textit{Overview of proof:} The analogous vector   $\barY^*$ for the QSA ODE   \eqref{e:Backwards_QSA} is denoted $\barY^{*-}$.
The identity  $\barY^{*-} = -\barY^*$ is established through consideration of \eqref{e:barY}, and  the desired bound is then obtained by combining   \eqref{e:PRtheta} with the definition \eqref{e:Backwards_QSA}.  \qed

Assumption (QSA0) implies that the frequencies $\{\omega_i\}$ are irrationally related.   
This might appear far stronger than needed:  for one,  the full rank condition for $\Sigmaqsa$ can be achieved under far weaker assumptions.   In two dimensions,   $\Sigmaqsa =\half I$ for each of the following special cases:  $\qsaprobe _t = [\cos (   t) , \cos (  2 t) ]^\transpose $   and $ 
\qsaprobe _t = [\cos (  t) , \sin (  t) ]^\transpose$.     
However, in each case we can construct a second order polynomial $h\colon\Re^2\to\Re$ such that $h( \qsaprobe _t ) =0$ for all $t$,  so that the \textit{excitation} implied by the full rank condition is lost through a simple nonlinearity.    This is only the first sign of trouble.

We  consider next a numerical example  designed with four positive frequencies that are linearly independent over the rationals, denoted $[  \omega_{1} \,, \,	\omega_{2} \,, \, \omega_{3} \,, \,\omega_{4} ] $. 
Consider the linear QSA ODE,
\begin{align*}
	&\ddt\ODEstate_t =a_tf(\ODEstate_t,\qsaprobe_t) =(1+t)^{-\rho}[(A^*+A^\circ_t)\ODEstate_t +  10 b^\circ_t]
	\\[.5em]
	&	\text{with}  \quad
	A^\circ_t  = 
	\begin{bmatrix}
		4\sin(\omega_{1} t)&\sin(\omega_{2}t)\\
		\sin(\omega_{3}t)&4\sin(\omega_{4}t)\\
	\end{bmatrix} \qquad
	b^\circ_t =
	\begin{bmatrix}
		2\cos(\omega_{1} t)\\
		\cos(\omega_{4}t)\\
	\end{bmatrix}  \qquad \text{and $A^* $ Hurwitz. }
\end{align*}
We have $\barf(\theta) = A^* \theta$ and hence $\theta^\ocp =0$.   
To fit the notation \eqref{e:qSGD_probe0}
we must take $\qsaprobe_t\in \Re^6$,  which can be chosen so that   $\Sigmaqsa =\half I $.  
The expression $\barY^* = -20[2/\omega_{1}, 1/\omega_{4}]^\transpose$  follows from \eqref{e:barY}.

The conclusions of \Cref{t:FB_ROC} are illustrated in \Cref{fig:comp_rho_ROC}.
The model was simulated with  $A^* = - 0.8 I $
and frequencies  $ [ 	\pi 	\,, \, \sqrt{3} \,, \, 4 	\,, \, \sqrt{5}  ]/5$.   
The QSA ODEs were constructed using  three values of the parameter in the gain process,  $\rho \in \{0.6,0.7,0.8\}$.

The plots shown on the left in \Cref{fig:comp_rho_ROC}  were obtained using PR averaging with  $T_0 =(1-1/{\kappa})T$ and $\kappa = 4$.  
The convergence rate is $O(T^{-\rho})$ because $\barY^* \neq \Zero$  (recall \eqref{e:PRtheta}).
FB filtering achieves the convergence rate of $O(T^{-2\rho})$, for each value of $\rho$ tested,  as predicted by \Cref{t:Couple_main}.

\subsection{Quasi-Monte Carlo}
\label{s:QMCnum}

Results from a  simple experiment are provided here to illustrate that we can design algorithms to achieve convergence rates far faster than $O(\log(n)^K/n)$ in applications to QMC.

Suppose that $\bfqsaprobe$ is $m$-dimensional, with components equal to triangle waves:   $\qsaprobe_t^i  =  \bigtriangleup (\omega_i t + \phi_i)$ for each $i$ and $t$, with $\bigtriangleup$ the unit sawtooth wave with unit period and range $\pm 1$:
\[
\bigtriangleup (t) =  1-4 | \half - \text{frac}\{t+ \tfrac{1}{4} \} | 
\]

We present results from a simple experiment as illustration:
Our goal is to estimate the mean of $g(Y)$ with $g\colon\Re^2\to \Re$ and $Y$ uniformly distributed on the rectangle $[-1,1]^2$.   
The probing signal is chosen to be two-dimensional
, so that a QSA algorithm to estimate $\barh$ is obtained with $f(\theta,\qsaprobe) =-\theta +h(\qsaprobe)$  for which $\barf(\theta) = -\theta + \barh$:
\begin{equation}
	\ddt\ODEstate_t = a_t [  -\ODEstate_t  +   h(\qsaprobe_t) ]
	\label{e:QSAQMC}
\end{equation} 
Based on the formula \eqref{e:barY} we have $\barY^*=0$.

Consider the function $h(x_1,x_2) = \exp( \gamma x_1 )\sin(2\pi (x_2-x_1) ) $, for which $\barh=0$ for any value $\gamma$.  The frequencies $\{\omega_1,\omega_2\} = \{\log(6),\log(2)\} \approx \{     1.8    , 0.69 \}$ were chosen, along with several values of $\rho$, 
and $\kappa=5$ in application of PR averaging.   The ODEs were approximated using an Euler approximation with sampling time $T_s=0.1$ sec., which is roughly 1/5 of the shortest period $1/\omega_1$.

The data displayed in \Cref{f:Hist_IID+QSA_p7} is based on four experiments,  differentiated by two values   $\gamma=1,4$,  and choice of probing signal.    The first column uses the triangle wave described above, while the second is based on  $\{\qsaprobe_{t_n} : n\ge 0\}$ i.i.d.\ and uniform on $[-1,1]^2$,  where $\{t_n\}$ are the sampling times used in the Euler approximation for QSA.    It is known that  Polyak-Ruppert averaging shares the same CLT (asymptotic) variance as the usual sample path average when the probing signal is i.i.d.\  \cite{pol90,poljud92}.    The empirical variance using PR averaging was found to be similar to what was obtained using standard Monte Carlo.

Each histogram was created based on a runlength of $T=10^4$  (corresponding to $10^5$ samples, given $T_s=0.1$),   and 500 independent runs in which the phases were sampled uniformly and independently on $[0,1]$,   and the initial condition $\theta_0$ was   
sampled uniformly and independently on $[-25,25]$.  The value $\rho=0.7$ was chosen, resulting in $T^{2\rho} \approx 4\times 10^5$.  
The  MSE is roughly \textit{four orders of magnitude larger} for the stochastic algorithm as compared to QSA.

The remaining experiments surveyed here are based on $\gamma=1$.

\Cref{fig:PRQSA} shows histograms of the scaled estimation error for quasi-Monte Carlo with averaging for two cases: $\rho=0.7\,(\beta=1.4)$ and $\rho=0.8\,(\beta=1.6)$.   The scaled error exhibits more variability as $\rho$ is increased.  In this experiment,  the histogram obtained with  $\rho=0.8$ is roughly two times wider than for $\rho=0.7$.    In other words,  the rate of convergence is bounded by a constant $B_\rho$ times $T^{-2\rho}$,  and in these experiments we observe that the best constant $B_\rho$ is an increasing function of $\rho$.

\Cref{f:VerySorry} shows sample paths of estimates for   standard Monte Carlo and the stochastic PR algorithm,  with     $\{\qsaprobe_{t_n} : n\ge 0\}$ i.i.d.\ and uniform.

\begin{figure}
	\centering
	\includegraphics[width= 0.75\hsize]{./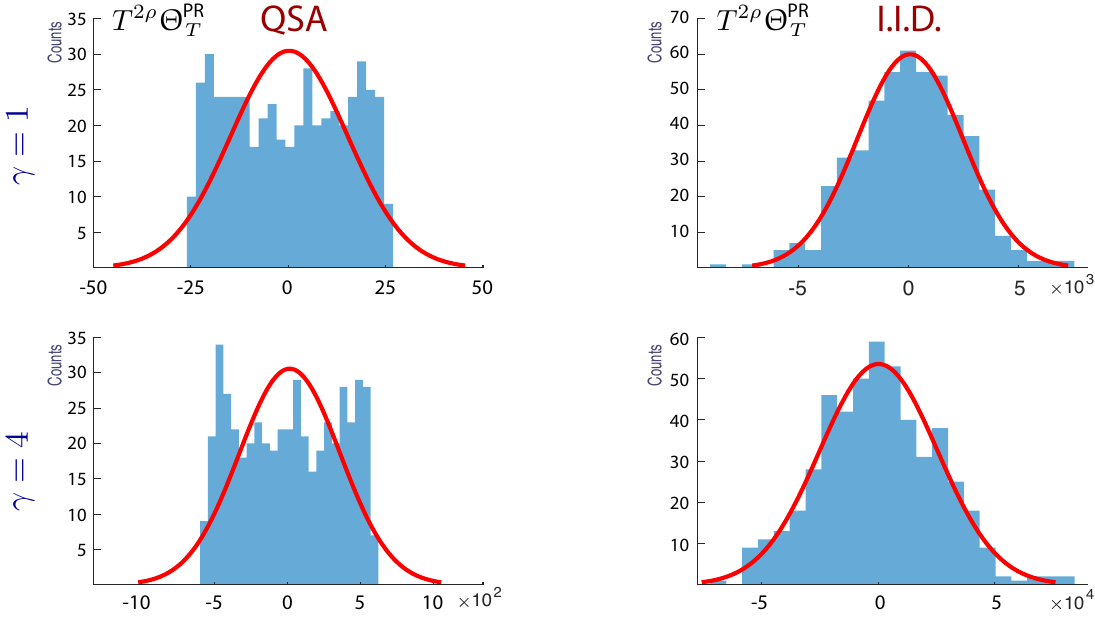}
	\caption{Comparison of Monte Carlo and quasi-Monte Carlo with Polyak-Ruppert averaging.}
	\label[figure]{f:Hist_IID+QSA_p7}
\end{figure}

\begin{figure}[htbp]
	\centering
	\includegraphics[width= 0.8\hsize]{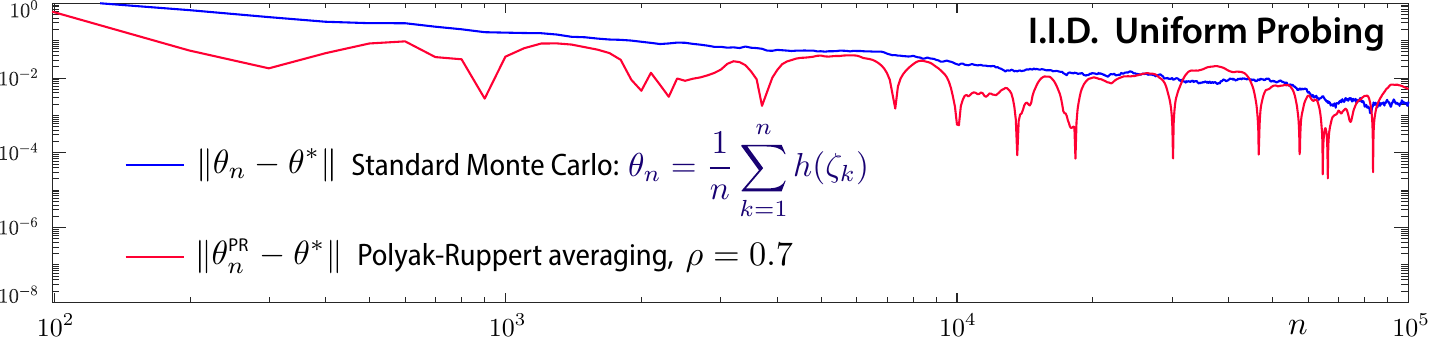}
	\caption{Slow convergence of Monte Carlo and  Monte Carlo with Polyak-Ruppert qveraging.}
	\label[figure]{f:VerySorry}
\end{figure}

The next  set of experiments illustrate that the rate of convergence observed in \Cref{f:VerySorry} is \textit{far slower} than observed in any of the deterministic algorithms.

Each row of \Cref{f:notSorry} contains four plots:  $| \ODEstate_T  |$  obtained with \eqref{e:QSAQMC} using the indicated value of $\rho$,     $| \ODEstatePR_T  |$ obtained using PR averaging,   and $T^{-\rho}$, $T^{-2\rho}$ as these are the convergence rate bounds for each case.     The plots illustrate that the $O(T^{-2\rho})$ bound on the convergence rate using PR averaging is achieved for smaller values of $\rho>1/2$.   With $\rho=0.9$ the results are not so clearly compatible with theory;
recall that the theory requires $\rho<1$, so we expect numerical challenges when $\rho$ is close to~$1$.

\begin{figure}[htbp]
	\centering
	\includegraphics[width= 0.85\hsize]{./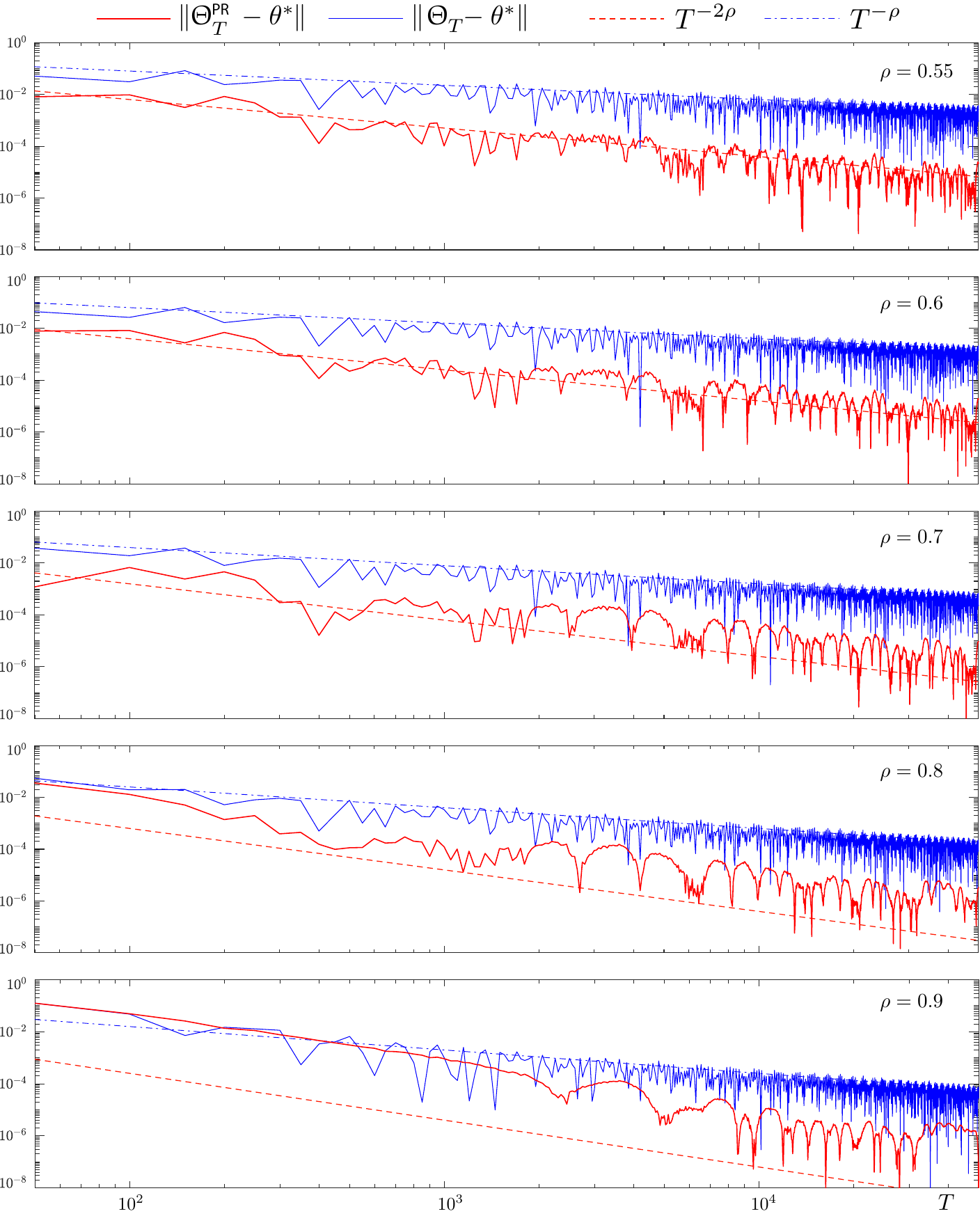}
	
	\caption{Quasi-Monte Carlo with Polyak-Ruppert averaging.}
	\label[figure]{f:notSorry}
\end{figure}

\vfill

\clearpage

\section{Gradient-Free Optimization} 
\label{s:GFO}

We now turn to   applications to gradient-free optimization (GFO).    
It is assumed that the objective   $\Obj \colon\Re^d\to\Re$ is $C^2$ and that it has a unique minimizer denoted $\thetaopt$,
and that  $\Sigmaqsa \eqdef \langle\qsaprobe\qsaprobe^\transpose \rangle $ is full rank.

\subsection{Gradient-Free Optimization and QSA}
\label{s:GFO_intro}
 
Two approaches are considered in the following:  For $\epsy>0$ and each $\ODEstate_0 \in \Re^d$,  
\begin{align}
	\textbf{$\mathbf{1}$qSGD:} \qquad	&	\fq(\ODEstate_t,\qsaprobe_t) = -\frac{1}{\epsy}\qsaprobe_t \Obj(\ODEstate_t+\epsy\qsaprobe_t)
	\label{e:qSGD1}
	\\
	\textbf{$\mathbf{2}$qSGD:} \qquad	&\fqq(\ODEstate_t,\qsaprobe_t) = -\frac{1}{2\epsy}\qsaprobe_t [\Obj(\ODEstate_t+\epsy\qsaprobe_t)-\Obj(\ODEstate_t-\epsy\qsaprobe_t)]
	\label{e:qSGD3}
\end{align} 

The mean vector fields $\barf^{\,\text{1Q}},\barf^{\,\text{2Q}}$ are identical, provided the probing signal   is symmetric:
\begin{proposition}
	\label[proposition]{t:samefbar}
	Suppose (QSA1)-(QSA5) hold, that $\Obj$ is $C^2$ with unique minimizer $\thetaopt$. Assume moreover that the   probing signal is  of the form \eqref{e:qSGD_probe} with $m =d$ and distinct frequencies $\{\omega_i\}$. Then, the average vector fields for $1$qSGD and $2$qSGD are equal:
	\begin{equation}
		\barf   (\theta) =  
		\langle\fq(\theta,\qsaprobe)\rangle  = \langle\fqq(\theta,\qsaprobe)\rangle = 
		- \Sigmaqsa \nabla \Obj(\theta) + O(\epsy^2)
		\label{e:barf_qsgd}
	\end{equation}
\end{proposition}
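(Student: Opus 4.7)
The plan is to prove the two halves of \eqref{e:barf_qsgd} separately: first that $\langle \fqq(\theta,\qsaprobe)\rangle = -M\Sigmaqsa\nabla\Obj(\theta)+O(\epsy^2)$, then that the difference $\langle \fq(\theta,\qsaprobe) - \fqq(\theta,\qsaprobe)\rangle = O(\epsy^2)$. With $\phi_i = 1/4$, each summand in \eqref{e:qSGD_probe} reduces to $-v^i\sin(2\pi\omega_i t)$, so $\qsaprobe_t$ is uniformly bounded in $t$ and composed of pure sines; all Taylor remainders are therefore uniform in $t$ and well-behaved under the Cesaro mean $\langle\cdot\rangle$.

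For $2$qSGD the symmetric-difference structure kills every even-order term in the Taylor expansion of $\Obj$,
\begin{equation*}
	\Obj(\theta+\epsy\qsaprobe) - \Obj(\theta-\epsy\qsaprobe)
		= 2\epsy\, \qsaprobe^{\transpose}\nabla\Obj(\theta)
		+ \tfrac{\epsy^3}{3} D^3\Obj(\theta)[\qsaprobe,\qsaprobe,\qsaprobe] + O(\epsy^5),
\end{equation*}
so that substitution into \eqref{e:qSGD3} gives
\begin{equation*}
	\fqq(\theta,\qsaprobe) = -M\qsaprobe\qsaprobe^{\transpose}\nabla\Obj(\theta)
		- \tfrac{\epsy^2}{6}\, M\qsaprobe\, D^3\Obj(\theta)[\qsaprobe,\qsaprobe,\qsaprobe] + O(\epsy^4).
\end{equation*}
Taking $\langle\cdot\rangle$ and using $\langle\qsaprobe\qsaprobe^{\transpose}\rangle = \Sigmaqsa$, together with boundedness of the fourth-moment polynomial in the $\epsy^2$ term, yields the claimed $-M\Sigmaqsa\nabla\Obj(\theta) + O(\epsy^2)$.

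The difference between the two estimators telescopes to $\fq - \fqq = -\tfrac{1}{2\epsy}M\qsaprobe\bigl[\Obj(\theta+\epsy\qsaprobe)+\Obj(\theta-\epsy\qsaprobe)\bigr]$, in which only even-order Taylor terms survive:
\begin{equation*}
	\fq(\theta,\qsaprobe) - \fqq(\theta,\qsaprobe)
		= -\tfrac{1}{\epsy}M\qsaprobe\,\Obj(\theta)
		- \tfrac{\epsy}{2} M\qsaprobe\,\qsaprobe^{\transpose}\nabla^2\Obj(\theta)\qsaprobe
		+ O(\epsy^3).
\end{equation*}
The singular $\epsy^{-1}$ piece averages to zero as soon as $\langle\qsaprobe\rangle = 0$, which is immediate since each $-\sin(2\pi\omega_i t)$ has positive frequency. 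The $\epsy^{1}$ piece is a third-order polynomial in $\qsaprobe$, so its Cesaro average is controlled by the triple moments $\langle\qsaprobe^i\qsaprobe^j\qsaprobe^k\rangle$; if these all vanish then $\langle \fq-\fqq\rangle = O(\epsy^2)$ and combining with the $2$qSGD computation completes the proof.

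The only nontrivial step, and the main obstacle, is thus the vanishing of every triple moment. I would expand each component as $\qsaprobe^i = -\sum_r v^r_i \sin(2\pi\omega_r t)$ and apply the product-to-sum identity
\begin{equation*}
	\sin\alpha\sin\beta\sin\gamma
		= \tfrac{1}{4}\bigl[\sin(\alpha+\beta-\gamma) + \sin(\beta+\gamma-\alpha) + \sin(\gamma+\alpha-\beta) - \sin(\alpha+\beta+\gamma)\bigr],
\end{equation*}
so that each triple product reduces to a sum of sinusoids of frequency $\pm\omega_{r_1}\pm\omega_{r_2}\pm\omega_{r_3}$. Under the rational-independence structure \eqref{e:logFreq} used throughout the paper — in fact all that is needed here is the weaker requirement that no such signed triple sum vanishes — every such frequency is strictly positive, so each sinusoid has zero Cesaro mean and the triple moment is zero. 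This is in the same spirit as \Cref{t:BakerCor} but substantially simpler, since we need only existence of the Cesaro limit with value zero rather than the quantitative $O(1/T)$ rate furnished by Baker's theorem.
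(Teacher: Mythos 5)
Your proof takes a genuinely different route from the paper's, and there is one gap worth flagging.

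The paper's argument is built on a single distributional symmetry: \Cref{t:arcsinLaw} shows that the time-average $\langle h(\qsaprobe)\rangle$ equals $\Expect[h(VX)]$ with $X$ distributed arcsine-symmetrically about zero, so $\qsaprobe$ and $-\qsaprobe$ are interchangeable under $\langle\cdot\rangle$. This immediately gives the \emph{exact} identity
\[
\langle\fq(\theta,\qsaprobe)\rangle = -\tfrac{1}{\epsy}\langle\qsaprobe\,\Obj(\theta+\epsy\qsaprobe)\rangle
= \tfrac{1}{\epsy}\langle\qsaprobe\,\Obj(\theta-\epsy\qsaprobe)\rangle,
\]
from which $\langle\fq\rangle = \langle\fqq\rangle$ with no Taylor expansion at all; the same odd/even symmetry also gives \Cref{t:3sin}, i.e.\ $\langle\qsaprobe\qsaprobe^\transpose M\qsaprobe\rangle = 0$ for every constant $M$. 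Your approach instead Taylor-expands each estimator and kills individual polynomial moments via explicit product-to-sum identities and rational independence of the $\omega_i$. That computation is correct as far as it goes, and your parity argument for the triple moment (an odd number of $\pm 1$'s cannot net to zero coefficients under rational independence) does establish $\langle\qsaprobe^i\qsaprobe^j\qsaprobe^k\rangle = 0$. The paper's symmetry trick is more economical and dispenses with any dependence on the arithmetic structure of the frequencies, while your Fourier-side computation is more elementary and makes the mechanism visible.

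The gap: the proposition asserts the exact equality $\langle\fq\rangle = \langle\fqq\rangle$, but your argument only yields $\langle\fq - \fqq\rangle = O(\epsy^3)$, because you truncate the Taylor series of $\Obj(\theta+\epsy\qsaprobe)+\Obj(\theta-\epsy\qsaprobe)$ after the Hessian term and absorb the tail into a remainder. (In passing, you report this as $O(\epsy^2)$, but after the $\epsy^{-1}$ and $\epsy$ terms vanish it is actually $O(\epsy^3)$.) The fix is already sitting in your own display: you derived
\[
\fq(\theta,\qsaprobe) - \fqq(\theta,\qsaprobe) = -\tfrac{1}{2\epsy} M\qsaprobe\,\bigl[\Obj(\theta+\epsy\qsaprobe)+\Obj(\theta-\epsy\qsaprobe)\bigr],
\]
and the right-hand side is an \emph{odd} function of $\qsaprobe$ (the bracket is even, the prefactor $\qsaprobe$ is odd). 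Since $\qsaprobe$ and $-\qsaprobe$ have the same statistics under $\langle\cdot\rangle$, this averages to zero exactly, without any Taylor expansion or moment bookkeeping. With that one-line replacement, your term-by-term route becomes fully correct; without it, you have proved the weaker statement that the two average vector fields agree to $O(\epsy^3)$, which happens to suffice for the later use in \Cref{t:bias_qSGD1_strconvx} but does not match the claimed equality.
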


For the special case of a strongly convex  objective $\Obj$,  bounds on bias of $2$qSGD are well known \cite{spa03}.    An application of \Cref{t:samefbar} implies that the bias for $1$qSGD is identical:   
\label{s:GFO_bias_var}
\begin{corollary}
	\label[corollary]{t:bias_qSGD1_strconvx}
	Suppose the assumptions of \Cref{t:samefbar} hold, and $\Obj$ is strongly convex.
	\\
	Then, $\| \theta^{\ocp}  -\thetaopt \|  \leq O(\epsy^2)   
	$   for either $1$qSGD or $2$qSGD.
\end{corollary}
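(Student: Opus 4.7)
The plan is to reduce the corollary to the equation $\barf(\theta^{\ocp}) = 0$ combined with the Taylor-expansion formula supplied by \Cref{t:samefbar}. Under the stated assumptions, \Cref{t:samefbar} gives, in the normalized setting $M=\Sigmaqsa = I$,
\begin{equation*}
  \barf(\theta) = -\nabla \Obj(\theta) + R(\theta,\epsy), \qquad \|R(\theta,\epsy)\| = O(\epsy^2),
\end{equation*}
uniformly on a neighborhood of $\thetaopt$. Since $\theta^{\ocp}$ is by definition the unique zero of $\barf$ (its existence and uniqueness follow from strong convexity of $\Obj$ together with the $O(\epsy^2)$ perturbation bound and the implicit function theorem), evaluating at $\theta=\theta^{\ocp}$ yields $\nabla \Obj(\theta^{\ocp}) = R(\theta^{\ocp},\epsy) = O(\epsy^2)$.

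Next I would invoke strong convexity of $\Obj$. Let $\mu>0$ denote the strong-convexity constant, so that for every $\theta$,
\begin{equation*}
  \langle \nabla \Obj(\theta) - \nabla \Obj(\thetaopt),\, \theta-\thetaopt\rangle \;\ge\; \mu \|\theta-\thetaopt\|^2.
\end{equation*}
Using $\nabla \Obj(\thetaopt) = 0$ and the Cauchy--Schwarz inequality gives the classical bound $\|\nabla \Obj(\theta)\| \ge \mu\|\theta-\thetaopt\|$. Applied at $\theta=\theta^{\ocp}$ this delivers
\begin{equation*}
  \|\theta^{\ocp} - \thetaopt\| \;\le\; \mu^{-1} \|\nabla \Obj(\theta^{\ocp})\| \;=\; O(\epsy^2),
\end{equation*}
which is the desired bound. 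Because \Cref{t:samefbar} produces the same averaged vector field for both 1qSGD and 2qSGD, the identical argument covers both cases simultaneously.

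The only mildly delicate step is justifying that $\theta^{\ocp}$ lies in a neighborhood of $\thetaopt$ on which the $O(\epsy^2)$ remainder in \eqref{e:barf_qsgd} is uniform. This is handled by a standard perturbation argument: the unperturbed field $-\nabla \Obj$ is strongly monotone by strong convexity, so any zero of the $O(\epsy^2)$-perturbed field must lie within $O(\epsy^2)$ of $\thetaopt$ for $\epsy$ small enough, and a local application of the implicit function theorem (or a direct contraction argument) makes this precise. Apart from this routine localization, the proof is a short two-line calculation.
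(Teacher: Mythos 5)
Your proof is correct and follows essentially the same route as the paper: combine $\barf(\theta^{\ocp})=0$ with the expansion $\barf(\theta)=-\nabla\Obj(\theta)+O(\epsy^2)$ from \Cref{t:samefbar} to get $\nabla\Obj(\theta^{\ocp})=O(\epsy^2)$, then apply the strong-convexity inequality (plus Cauchy--Schwarz) to bound $\|\theta^{\ocp}-\thetaopt\|$. The extra paragraph you add on localizing the $O(\epsy^2)$ remainder and justifying the implicit-function-theorem step is a reasonable point of rigor that the paper leaves implicit (it relies on existence of $\theta^{\ocp}$ being baked into (QSA3)); it does not change the argument.
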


		
\subsection{Numerical Examples}
\label{s:GFO_num_ex}

We survey results of numerical experiments, whose objective functions were selected from \cite{simulationlib}.\footnote{Publicly available code obtained under GNU General Public License v2.0.} 
For a rectangular region $B_0 \subseteq \Re^d$, we will frequently use the expression ``projection of sample-paths onto $B_0$''. This means we project the trajectories $\{\ODEstate_t\}$ to $B_0$ component-wise in any approximation of the QSA ODE.   For the one-dimensional setting with $B_0 = [-1,1]$,  the projection is defined by 
$ \max\{-1,\min\{1,\theta_{n}\}\}$ where $\theta_n$ is an approximation of $\ODEstate_{t_n}$ at sampling time $t_n$.

Projection is often necessary because Lipschitz continuity of $f$ is assumed in (QSA2). This requires $\Obj$ to be Lipschitz continuous when employing $1$qSGD \eqref{e:qSGD1} and $\nabla \Obj$ when using $2$qSGD \eqref{e:qSGD3} \cite{chedevbermey21,CSRL}. Restricting $\{\ODEstate_t\}$ to a closed and bounded set is a way to relax these requirements.

Each experiment contained the following common features:
\begin{romannum}
	\item Only $1$qSGD was considered.
	
	\item The gain process was $a_t = \min\{a_0,(t+1)^{-\rho}\}$ with $1/2 \leq \rho \leq 1$, and $a_0>0$.
	
	\item The QSA parameters $a_0$ and $\epsy$ were problem specific and chosen by trial and error.

	\item The ODE \eqref{e:qSGD1} was approximated by an Euler scheme with sampling time equal to $1$~sec. This crude approximation is justified via $(\text{ii})$ by choosing $a_0 > 0$ sufficiently small.
	
	\item Averaging was performed with $\kappa=5$ (final 20\%\ of samples).

	\item A rectangular constraint region $B_0 \subseteq \Re^d$ was fixed. The selection of $B_0$ was based on conventions of the particular objective given in \cite{simulationlib}.
	
	\item For each objective and algorithm, the frequencies were held fixed in $M$ independent experiments:  
	$\{\omega_i : 1 \leq i \leq d\}$ were  uniformly sampled from $[0.05,0.5]^d$.   The initial conditions and phases were not held constant:
	For $\{1 \leq m \leq M\}$,
	\begin{alphanum}
		\item The phases $\{\phi_i^m : 1\le i \le d \}$ were sampled uniformly on $[-\pi/2,\pi/2]^d$.  
		That is, the probing signal respected $\eqref{e:qSGD_probe}$, and in the $m$th experiment the probing signal took the form 
		\[
		\qsaprobe^{m}_t = 2[\sin(\omega_1t+\phi_1^m) \,,\cdots, \ \sin(\omega_dt+\phi_d^m)]^\transpose
		\] 
		giving $\Sigmaqsa =2I$. 
		\item The initial condition $\ODEstate^m_0$ were selected uniformly at random from $B_0$.
	\end{alphanum}

	\item The outputs were the sample paths $\{\ODEstate^m_t: t \geq 0\}$, $\{\Obj(\ODEstate^m_t): t \geq 0\}$ and/or the sample covariance:
	\begin{equation}
		\barSigma_T = \frac{ 1}{M} \sum_{i=1}^{M} \ODEstate_T^i  {\ODEstate_T^i }^\transpose   -  \barODEstate_T \barODEstate_T^\transpose\,,\qquad \barODEstate_T= \frac{ 1}{M} \sum_{i=1}^{M} \ODEstate_T^i
		\label{e:empCov}
	\end{equation}
	
\end{romannum}

Theory predicts that root mean square error $T^{2\rho} \sqrt{\text{tr}(\barSigma_T)}$ will be bounded in $T$ subject to conditions on the algorithm and probing signal.
It is found that the results remain consistent with theory for moderate dimension in each example tested.

		\begin{figure}[h]
			\includegraphics[width=1\hsize]{./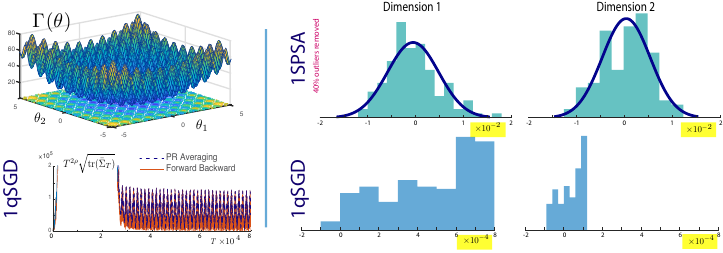}
			\caption{Rastrigin objective (top left), scaled trace of empirical covariance (bottom left), histograms of estimation error for $1$SPSA with PR averaging (top middle and top right), histograms of estimation error for $1$qSGD with PR averaging (bottom middle and bottom right).}\label{fig:Rastrigin}
		\end{figure}

\wham{Rastrigin:}    The experiments illustrated in \Cref{fig:Rastrigin,fig:contour_rasta} are based on the Rastrigin objective with $d=2$ and projection region $B_0 = [-5.12,5.12]^d$, following \cite{simulationlib}.
The experiments used $T= 8 \times 10^4$, $M=200$ and $a_0 =0.5$. The 1qSGD and 2qSGD algorithms were compared in this standard benchmark \cite{simulationlib},  for which a plot of the objective can be found on the upper left in \Cref{fig:Rastrigin}.    In each case the values $\epsy = 0.25$, $\kappa = 5 $  and $\rho=0.85$ were used.  
The stochastic counterparts were included in these experiments in which the exploration sequence $\{\zeta_n\}$ was chosen i.i.d.\ and zero-mean. It was a scaled and shifted Bernoulli with $p=1/2$ and support chosen so that its covariance matrix $\Sigma$ satisfied $\Sigma=\Sigmaqsa$.

Histograms are shown only for $1$qSGD and $1$SPSA using PR-averaging in \Cref{fig:Rastrigin}.   The results obtained for $1$qSGD with FB filtering were similar.    Results obtained using $2$qSGD and $2$SPSA were similar qualitatively.  Outliers were identified with Matlab's \textit{isoutlier} function and  excluded in these plots.   Outliers were found in 20\%\ of the independent runs for $1$SPSA,   and none for $1$qSGD.   Outliers for deterministic algorithms were observed in other experiments, but fewer than in their stochastic counterparts. 

		\begin{wrapfigure}{r}{0.35\textwidth}
	\includegraphics[width= \hsize]{./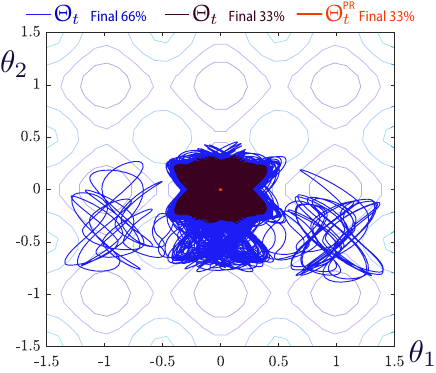}
	\caption{1qSGD    for the Rastrigin objective:
		$\{\ODEstate_T \}$ for the final $66\%$ of the run, and $\{\ODEstatePR_T\}$ for the final $33\%$. }
	\label{fig:contour_rasta}
\end{wrapfigure}

As predicted by theory,  the   root-MSE  $  T^{2\rho} \tr(\widebar{\Sigma}_{T})^\half$ is bounded in $T$ when using FB filtering or PR averaging:  see the plot on the lower left in  \Cref{fig:Rastrigin}. Apart from achieving near quartic convergence rates, the estimates of $\thetaopt$ resulting from the deterministic algorithm had less variability than their stochastic counterpart. This reduction of variability was roughly two orders of magnitude. In both cases, the histograms show that the algorithms consistently overestimate $\thetaopt$. This is most likely due to the bias that is inherent in these algorithms (recall \Cref{s:GFO_intro}).

\Cref{fig:contour_rasta} shows part of the trajectory of $\{\ODEstate_T\}$ for a short run for better vizualization. We see that $\{\ODEstate_T\}$ oscillates between saddle points and local extrema before settling around the minimizer $\thetaopt =0$ near the end of the run.  The corresponding PR-estimates   very closely approximate $\thetaopt$ for the final 33\%\ of the run.

  \begin{figure}[h]
	\centering
	\includegraphics[width= 1\hsize]{./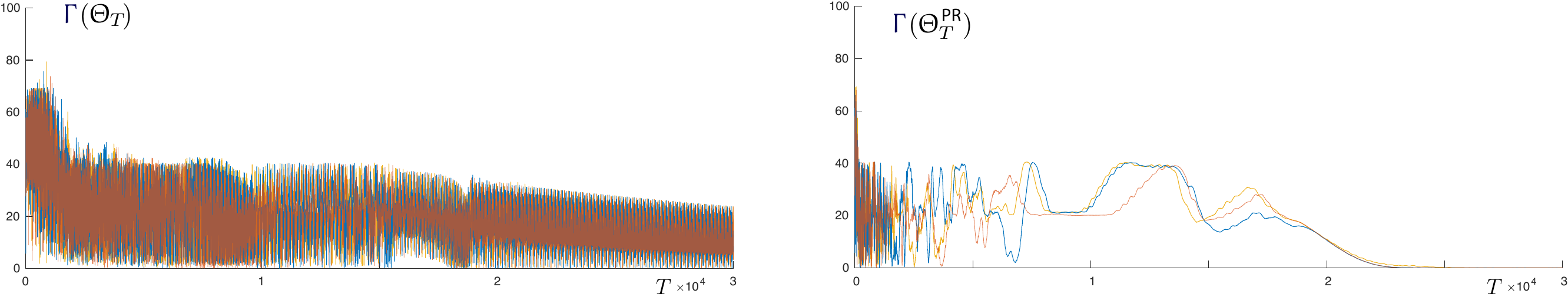}
	\caption{Evolution of $\Obj(\ODEstate_T)$ (left) and $\Obj(\ODEstatePR_T)$ (right) as functions of $T$ for the Rastrigin objective.}
	\label[figure]{fig:Rasta_Loss}
\end{figure}

  The plots in \Cref{fig:Rasta_Loss} show a comparison of both $\Obj(\ODEstate_T)$ and $\Obj(\ODEstatePR_T)$ for three distinct initial conditions $\ODEstate_0$. This plot illustrates the benefits of PR averaging as $\Obj(\ODEstatePR_T)$ approaches $\Obj(\theta^*)=0$ much quicker than $\Obj(\ODEstate_T)$ for each initial condition.


\begin{figure}[h]
	\centering
	\includegraphics[width= 1\hsize]{./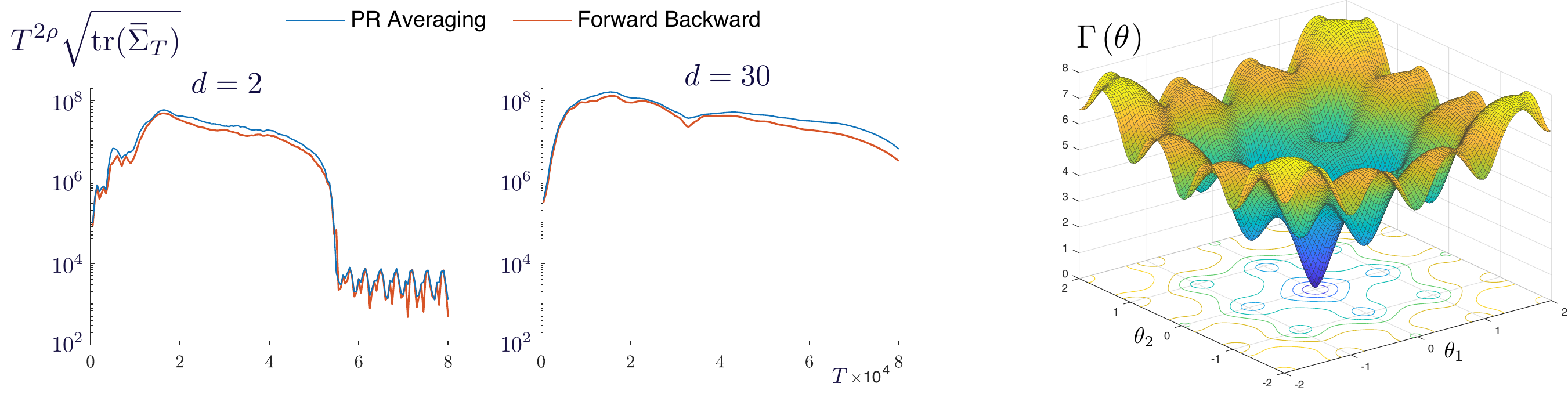}
	
	\caption{Scaled root-MSE for $d=2$ (left) and $d=30$ (middle). At right is the Ackley objective for $d=2$.}
	\label[figure]{f:Ackley}
\end{figure}
 
\wham{Ackley:}  This is another standard benchmark \cite{simulationlib},  whose objective function  is shown on the right in  \Cref{f:Ackley} in the special case $d=2$. We performed $M=50$ independent experiments of run length $T= 8 \times 10^4$ to minimize the Ackley objective with projection region equal to $B_0 = [-32.768,32.768]^d$ \cite{simulationlib}. Experiments only employed $1$qSGD and were performed for two sets of parameters:
\begin{romannum}
\item $\rho=0.85$, $a_0= 0.02$ and $\epsy = 0.01$.
\item $\rho=0.7$, $a_0= 0.07$ and $\epsy = 0.1$.
\end{romannum}

For each set of parameters, the root mean square error $T^{2\rho} \sqrt{\text{tr}(\barSigma_T)}$ was obtained for $d=2$ and $d=30$. Results for case (i) are pictured in \Cref{f:Ackley} and repeated in \Cref{f:Ackley_08507_cov} along with results for case (ii). Both PR averaging and FB filtering are successful in achieving  $O(T^{-2 \rho})$ convergence rates. 

Simulations indicate that the variance is bounded by $O(a_T^4)$ in each case, but it is possible that their value grows with dimension: we observe much larger values of   $\tr(\widebar{\Sigma}_{T})^\half$ at the end of the run for  $d=30$.   It is possible that better results will be improved by with a different choice  of probing signal $\bfqsaprobe$.

  \begin{figure}[h]
	\centering
	\includegraphics[width= 1\hsize]{./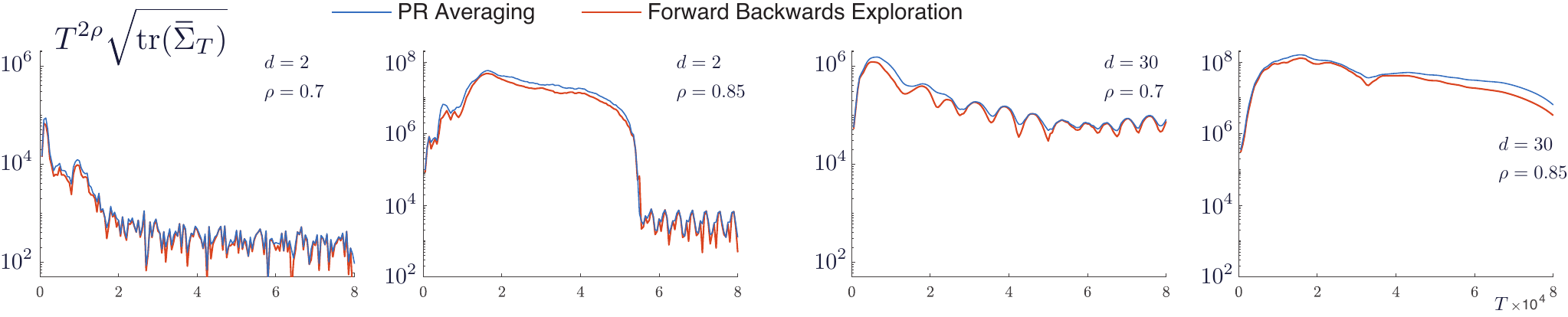}
	\caption{Comparison of scaled empirical covariances for Ackley objective.}
	\label[figure]{f:Ackley_08507_cov}
\end{figure}

\begin{figure}[h]
	\centering
	\includegraphics[width= 1\hsize]{./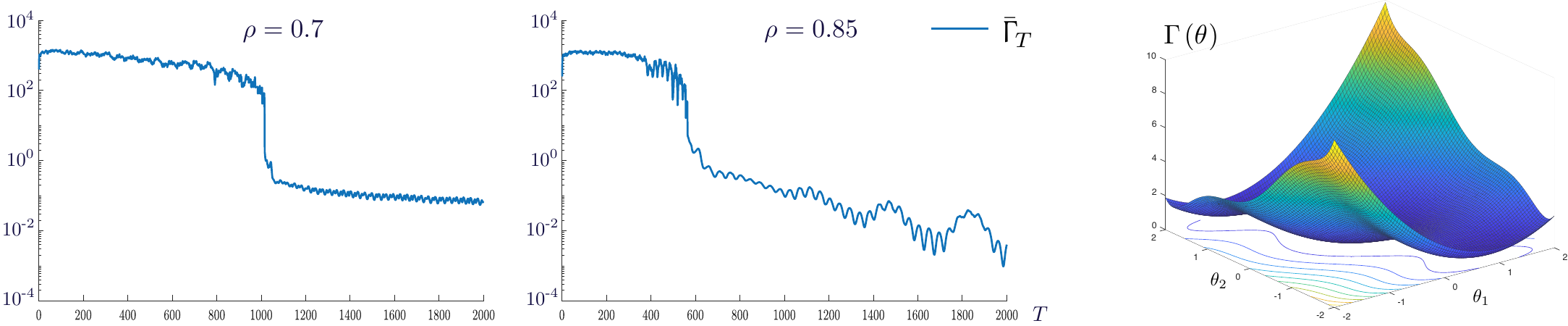}
	
	\caption{Average of $\Obj(\ODEstate^i_T)$ over $1\le i\le M$ independent runs for the three-hump camel objective. 
	  } 
	\label[figure]{f:3humpObj}
\end{figure}

\wham{Three-Hump Camel:}  This standard $d=2$ dimensional benchmark, plotted on the right in \Cref{f:3humpObj},  uses $B_0 = [-5,5]^d$ \cite{simulationlib}.

Results from the $1$qSGD algorithm are surveyed here, for the values  $\rho=0.85$ and $\rho=0.7$;  in each of these two cases the values $a_0= 0.01$ and $\epsy = 0.5$ were successful.  

The average loss across initial conditions was obtained via
\[
\bar{\Obj}_T = \frac{1}{M} \sum^M_{m=1} \Obj(\ODEstate^m_T)
\]
with estimates $\{\ODEstate^m_t : t\ge 0\}$ obtained using PR-averaging.
Its evolution in $T$ is plotted on the left of \Cref{f:3humpObj} where it is possible to observe that the value $\Obj(\ODEstatePR_t)$ converges rapidly towards its optimal value of zero. 

Covariance estimates were obtained based on $M=50$ independent experiments with run length $T= 8 \times 10^4$ and the root mean square error $T^{2\rho} \sqrt{\tr(\barSigma_T)}$ is shown by \Cref{f:3Hump_2dim08507cov}. The results for this experiment agree with the previous observations: near quartic rates are achieved. The variance appears to grow with $\rho$.

  \begin{figure}[htbp]
\centering
	\includegraphics[width= 1\hsize]{./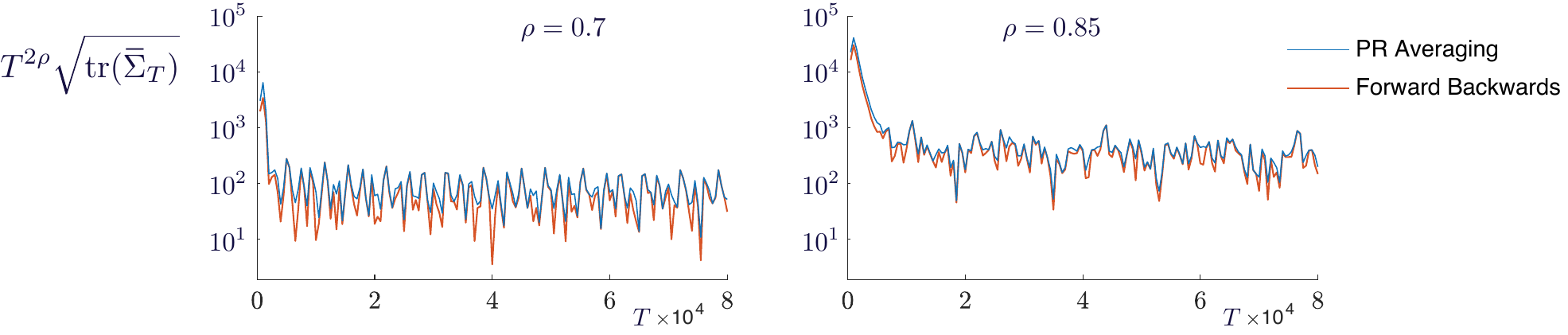}
\caption{Scaled empirical covariances for $\rho=0.7$ (left) and $\rho=0.85$ (right) for three-hump camel objective.}
\label[figure]{f:3Hump_2dim08507cov}
\end{figure}


\section{Conclusions} 
		\label{s:conc}

While it is convenient to design exploration around i.i.d.\ signals,  and this approach opens the doors to many powerful tools from probability theory,  we have shown that   deterministic ``noise'' has significant benefits.   Convergence rates can be accelerated dramatically provided the algorithm and deterministic probing signals are chosen with care.

There are many avenues for future research:

\witem
The impact of dimension on convergence rate appears to be understood for  QMC \cite{asmgly07}.   We currently do not know how to extend this theory to QSA because our analysis is rooted in properties of the $K$-dimensional clock process $\bfPhi$, which is far removed from the QMC setting.  

\witem
The optimal convergence rate for QSA is unknown for the approaches described in this paper, and we currently lack universal bounds that are not restricted to a particular algorithm.
\begin{romannum}
	\item   Can we improve the upper bounds for PR averaging and the FB algorithm?  It may be possible to achieve the bound $O(T^{-1-\rho})$,  rather than $O(T^{-2\rho})$ in current theory. 
	
	\item  Can we find fundamental bounds for any algorithm?    \textit{We haven't ruled out the creation of an algorithm with rate of convergence of order $O(T^{-10})$.   }
	This question is motivated by a long history of success in the stochastic approximation literature \cite{chu54o,rup88,pol90,poljud92,poltsy90}.

\end{romannum}

\witem 
It is not clear that the constraint $\rho>\half$ is required in this deterministic setting, and may be removed if we can improve the bounds in our analysis.
We have found in some experiments that the use of PR averaging results in very fast convergence even when this constraint is violated.

\witem Under what conditions is $  \sqrt{\text{tr}(\barSigma_T)}$ strictly smaller using the FB algorithm as compared to PR averaging?
We find in experiments that the FB algorithm usually outperforms averaging,  but
we have yet to find tools to obtain bounds that are rich enough to compare the two approaches.

\clearpage
	
	\def\cprime{$'$}\def\cprime{$'$}


	


	\appendix

	\addcontentsline{toc}{section}{Appendices}
\clearpage
	
\section{Assumptions for QSA Theory}
\label{s:assump}
The assumptions imposed in \cite[\S 4.5, 4.9]{CSRL} are listed below.   The setting there is far more general than here, since the entries of the probing signal are not restricted to functions of sinusoids.   In this prior work it is assumed that the probing signal   is a function of a deterministic signal $\bfPhi$ of the form $\qsaprobe_t = G(\Phi_t)$,  where $\bfPhi$ is   the state process for a dynamical system,
\begin{equation}
	\ddt\Phi_t = \qsaDyn(\Phi_t) 
	\label{e:qsaDynamics}
\end{equation} 
with $\qsaDyn\colon \prstate \to \prstate$ and $\prstate$ is a compact subset of the Euclidean space. It is assumed that it  has a unique invariant measure $\uppi$  on $\prstate$.

For the special case treated here, with $\Phi_t^i =   \exp(2\pi j[\omega_i t + \phi_i])$ for each $i$ and $t$,
we have $\prstate = S^K\subset \Co^K$ with $S$ the unit circle in $\Co$.   The dynamics \eqref{e:qsaDynamics} and the function $G$
are: 
\begin{equation}
	\begin{aligned}
		\ddt \Phi_t &= W \Phi_t   \qquad  &&    W \eqdef  2\pi j\text{\rm diag} (\omega_i)   
		\\
		\qsaprobe_t & = G(\Phi_t ) \qquad &&  G(z) \eqdef G_0(\half (z+ z^{-1})) \,,\qquad z\in \Co^K
	\end{aligned}
	\label{e:qsaDynLinear}
\end{equation}
where $G_0$ is the function appearing in \eqref{e:qSGD_probe0}. The function $G$ is then analytic on $z \in \{\Co \setminus \{0\}\}^K$ if $G_0$ is analytic on $\Co^K$. 

The proof of \Cref{t:arcsinLaw} below begins with a proof that $\uppi$  exists, with density    
$  \upvarrho(z_1,\dots, z_K)  =\prod_{i=1}^K u(z_i)$, where $u$ denotes the uniform distribution on $S$.   In particular, since $\qsaprobe_t = G(\Phi_t)$ for some function $G$,  the function $\barf$ in \eqref{e:ergodicA1} can be expressed,
\begin{equation}
	\barf(\theta)  =   \int_\prstate f(\theta,G(z))   \,  \uppi(dz) 
	\label{e:barf2}
\end{equation}

For any function $h: \Re^d \times \prstate \to \Re$ that is $C^1$, we define
\begin{equation}
	\clD h\, (\theta,z)  \eqdef  a_t[\clD^f h](\theta,z)   
	+   \partial_z   h \, (\theta, z) \cdot W  z  \, ,  \quad (\theta, z) \in \Re^d \times \prstate  
	\label{e:diff_gen}
\end{equation}
where $\clD^f h$ is defined in \eqref{e:direc_derivat}. The function $\clD h$ is continuous and the functional $\clD$ is known as the \textit{differential generator} in the Markov literature. Upon denoting $g = \clD h $, the chain rule gives 
\begin{equation}
	g(\ODEstate_t,\Phi_t) = \ddt h(\ODEstate_t,\Phi_t)
	\label{e:Chainrule_diffgen}
\end{equation}

The remaining assumptions are listed below.  Lipschitz bounds on $\haf$
and those in (QSA5) are partially justified by \Cref{t:BakerCor} subject to smoothness assumptions on $f$.

The first assumption sets restrictions on frequencies.

\begin{subequations}
	
	\wham{(QSA0a)}     
	$\qsaprobe_t  = G_0(\qsaprobe _t^0  )$ for all $t$,   with $\qsaprobe _t^0$ defined in \eqref{e:qSGD_probe0}.  The function 
	$G_0\colon\Re^K \to\Re^m$ is assumed to be analytic, with the coefficients in the Taylor series expansion for $G_0(\qsaprobe _t^0  )$ absolutely summable.

	\wham{(QSA0b)}   
	The frequencies $\{\omega_1\,,\dots\,,  \omega_K\}$ are chosen of the form
	\begin{equation}
		\begin{aligned}
			&\omega_i   = \log(a_i/b_i) > 0\,, 
			\ \  
			1\le i\le K\,, 
			\\
			&  \text{$\{\omega_i  \}$ ,  linearly independent over the rationals,}
		\end{aligned} 
		\label{e:logFreq}
	\end{equation}
	and with   $\{a_i,b_i\}$   positive integers.

	\wham{(QSA1)} The process $\bfma$ is non-negative, monotonically decreasing, and  
	\begin{equation}
		\lim_{t\to\infty} a_t = 0,\qquad\int_0^\infty a_r\, dr = \infty.
		\label{e:QSA_A5}
	\end{equation} 
	
	\wham{(QSA2)}
	The functions $\barf$ and $f$ are Lipschitz continuous:  for   a constant $\Lip_f  <\infty$,
	\begin{align*} 
		\|\barf(\theta') - \barf(\theta)\| &\le \Lip_f \|\theta' - \theta\|, 
		\\
		\|f(\theta',\qsaprobe) - f(\theta,\qsaprobe)\| + \|f(\theta,\qsaprobe') - f(\theta,\qsaprobe)\| 
		&\le
		\Lip_f [ \|\theta' - \theta\| +   \| \qsaprobe'-\qsaprobe\|   ]   \,, 
		\quad \theta', \, \theta\in\Re^d\,,  \qsaprobe, \qsaprobe' \in\Re^m 
	\end{align*}

	\def\SetV{\text{\sf S}}
	
	\wham{(QSA3)}
	The ODE $\ddt \odestate_t  = \barf  ( \odestate_t  )$ is globally asymptotically stable with unique equilibrium $\theta^\ocp$. Moreover, one of the following conditions holds:
	\begin{alphanum}
		\item  There is a Lipschitz continuous Lyapunov function $V: \Re^d \to \Re_+$, a constant $\delta_0>0$ and a compact set $\SetV$ such that $\nabla V (\odestate_t) \cdot \barf(\odestate_t)  \le -\delta_0 \| \odestate_t\|$ whenever $\odestate_t \notin \SetV$.
		\item  The scaled vector field $\barf_\infty\colon\Re^d\to\Re^d$ defined by $\barf_\infty(\theta) \eqdef \lim_{c \to \infty}  \barf(c \theta)/c $,  $\theta \in \Re^d$,  exists as a continuous function.    Moreover, the ODE@$\infty$ defined by  
		\begin{equation}
			\ddt x_t = \barf_\infty(x_t)
			\label{e:ODEatinfty}
		\end{equation}
		is globally asymptotically stable  \cite[\S 4.8.4]{CSRL}.
	\end{alphanum}

	\wham{(QSA4)}
	The vector field $\barf$ is   differentiable,  with derivative denoted 
	\begin{equation}
		\barA(\theta)  = \partial_\theta \barf \, (\theta)
		\label{e:barfDer}
	\end{equation}
	That is,   $\barA(\theta)$ is a $d\times d$ matrix for each $\theta\in\Re^d$, with 
	$\displaystyle
	\barA_{i,j}(\theta)  = \frac{\partial}{\partial \theta_j}   \barf_i\, (\theta)
	$.
	
	Moreover, the derivative  $\barA$ is Lipschitz continuous, and      $\barA^\ocp = \barA (\theta^\ocp)$ is Hurwitz.

	\wham{(QSA5)}
	$\bfPhi$ satisfies the following ergodic theorems for the functions of interest, for each initial condition $\Phi_0\in\prstate$:
	\begin{romannum}
		\item   For each $\theta$ there exists a solution  $\haf (\theta,\varble)$ to \textit{Poisson's equation} with forcing function $f$.   That is,
		\begin{equation}
			\haf( \theta, \Phi_{t_0} ) = 
			\int_{t_0} ^{t_1} [ f( \theta ,  \qsaprobe_t) - \barf(\theta)]  \, dt    + \haf( \theta, \Phi_{t_1} )   \,,
			\quad 0\le t_0\le t_1
			\label{e:PoissonA1}
		\end{equation}
		with $\barf$ given in \eqref{e:barf2} and for each $\theta$, $   \int_\prstate \haf(\theta,z) \,  \,  \uppi(dz)    = \Zero $.
		Finally, $\haf$ is continuously differentiable ($C^1$) on $\Re^d \times \prstate$. Its Jacobian with respect to $\theta$ is denoted
		\begin{align}
			&\haA(\theta,z) \eqdef \partial_\theta\haf(\theta,z)  \label{e:haA_def} 
			\\ 
			\text{where } 
			\int_\prstate &\haA(\theta,z) \,  \,  \uppi(dz)   = \Zero\, 
			\quad 
			\text{for each } \theta \in \Re^d
			\label{e:Ameanzero}
		\end{align}
		
		\item  For each $\theta$, there are $C^1$ solutions to  Poisson's equation with forcing functions $ \haf$ and $\Upupsilon$. They are denoted $\hahaf$ and $\haUpupsilon$, respectively, satisfying
		\begin{align}
			\hahaf( \theta, \Phi_{t_0} ) &= 
			\int_{t_0} ^{t_1}  \haf( \theta ,  \qsaprobe_t)  \, dt    + \hahaf( \theta, \Phi_{t_1} ) 
			\label{e:NoiseIntInt} 
			\\
			\haUpupsilon( \theta, \Phi_{t_0} ) &= 
			\int_{t_0} ^{t_1}  [\Upupsilon( \theta ,  \Phi_t) - \barUpupsilon(\theta) ] \, dt    + \haUpupsilon( \theta, \Phi_{t_1} ) \,,
			\quad 0\le t_0\le t_1 
			\label{e:haUpupsilon} 
			\\
			\text{with} 
			\quad 
			\barUpupsilon(\theta) & = \langle\Upupsilon(\theta,\Phi) \rangle=  - \int_\prstate \haA(\theta,z) f(\theta,G(z))\,  \,  \uppi(dz) 
			\label{e:barUpupsilon}
		\end{align}
		Moreover, for each $\theta$, 
		\begin{equation}
			\int_\prstate \hahaf(\theta,z) \,  \,  \uppi(dz) = 
			\int_\prstate \haUpupsilon(\theta,z) \,  \,  \uppi(dz)    =
			\Zero
			\nonumber 
		\end{equation}
	\end{romannum}
	
\end{subequations}

\begin{proposition}
	\label[proposition]{t:OneFrequency}  
	For any integer $n\ge 2$ there is a polynomial $\varrho$ satisfying
	\[
	\varrho(  \cos(   r)  )    =   \cos(nr)  \,,\quad r \ge 0 
	\]

	Consequently, in the special case of (QSA0b) with $K=1$,   for any increasing sequence of positive integers $n_1=1<n_2<\cdots<n_{K_\bullet}$ there is a polynomial function $G_0\colon\Re^K\to\Re^m$ such that
	\[
	\qsaprobe _t  =  G_0 (\omega_1 t  + \phi_1) 
	=   \sum_{i=1}^{K_\bullet} v^i \cos(2 \pi [n_i\omega_1 t + n_i\phi_1])  
	\]
	with $\{v^i\}$ as in 	\eqref{e:qSGD_probe}.    
\end{proposition}

\Cref{t:OneFrequency}  tells us that if we choose a small value for $K$ in (QSA0) then we must impose stronger conditions on the values of phases for the mixture of sinusoids model 	\eqref{e:qSGD_probe}.

\section{Technical Proofs} 
\label{s:tech_proofs}

	\subsection{Tighter Bounds for Quasi-Monte Carlo}

\Cref{t:BakerCor} below justifies boundedness of the integral \eqref{e:Poissoneqdef} and much more.  
Through careful design of the frequencies $\{\omega_i\} $ appearing in \eqref{e:qSGD_probe0} we can apply refinements of Baker's Theorem, as surveyed in the monograph \cite{bug18}.

\begin{theorem}
	\label[theorem]{t:BakerCor}   
	Suppose that the function 
	$h\colon\Re^m\to\Re$ is analytic in a neighborhood of the unit hypercube $[-1,1]^m$, and that $\qsaprobe_t$ in 		\eqref{e:qSGD_probe0} satisfies (QSA0).
	Then the limit defining  $ \langle h \rangle $ in  \eqref{e:spath-notation} exists with  $h_t = h(\qsaprobe_t)$, and the following ergodic bounds hold for some constant $B_f$ independent of the phase values $\{\phi_i\}$: 
	\begin{align}
		\Bigl| \frac{1}{T} \int_0^T  \tilh(\qsaprobe_t)  \, dt  \Bigr|  &\le B_f  \frac{1}{T}\,,  \quad T>0
		\quad \text{where $\tilh = h-\langle h \rangle$.   
		}
		\label{e:BakerBoundQMC}
	\end{align} 
	Moreover,   there is a function $\hah\colon\Co^K\to \Co^K$  that is analytic in a neighborhood of the domain $\prstate\eqdef \{ z \in\Co^K :  |z_i| = 1\,, \ 1\le i\le K\}$,  real-valued on $\prstate$, and satisfying
	\begin{equation}
		\begin{aligned}
			\int_0^T  \tilh(\qsaprobe_t)  \,  dt &=   \hah(\Phi_0) -  \hah(\Phi_T)  \,,\quad T\ge 0  \,,
			\\
			\langle \hah(\Phi) \rangle &= \lim_{T\to\infty}   \frac{1}{T}    \int_0^T  \hah(\Phi_t)  \, dt   =  0  
		\end{aligned} 
		\label{e:PoissonBaker}
	\end{equation}
\end{theorem}

A version of \eqref{e:BakerBoundQMC} in    discrete time is contained in    \Cref{t:BakerCorDis}.   This result is a big surprise, given that there is so much theory predicting   $O(\log(T)^K /T)$ bounds.   Much more surprising is that $O(1/T)$ is a very poor rate of convergence for the algorithms developed in this paper, as observed in \Cref{fig:PRQSA}.   This is explained in Section~\ref{s:QMCnum}, along with examples to show how the theory can be applied to obtain convergence rates of order  $O(T^{-2+\delta})$ for specially designed QSA algorithms, and in particular new approaches to  QMC.

A few key ideas are presented here:   The assumption that $h$ is analytic is imposed so that we can first restrict to complex exponentials,  $H(\Phi_t) =   \exp(2\pi  j t \omega  t) $,  whose integral equals $ (2\pi  j  \omega  )^{-1} H(\Phi_t)$ when $\omega\neq 0$.   In the proof of  \Cref{t:BakerCor}  we use   $\omega = \sum_{i=1}^K n_i \omega_i$  with $\{n_i\}$ integers;  they are not necessarily positive, but at least one $n_i$ is assumed non-zero.  Extensions of Baker's Theorem, as surveyed in  \cite{bug18},  give a strict lower bound of the form  $|\omega | \ge \delta n^{-C}$,  where $n= 3+\sum|n_i|$ and $\delta ,C$ are non-negative constants that are independent of $n$---see \Cref{t:bug18-1.8}.   This combined with routine Taylor series bounds establishes the desired conclusions.

Bounds on the constant $B_f$ requires bounds on the constant $C$ appearing in \Cref{t:bug18-1.8}.
Current bounds on  this constant grow rapidly with $K$,  such as the doubly exponential bounds obtained in \cite{mat98} and \cite{mat00}.   Recall from \Cref{t:OneFrequency} that the nonlinearity $G_0$ in \eqref{e:qsaDynLinear} permits the creation of rich probing signals   from simple ones.  Hence,  \textit{large dimension $d$ does not mean that we   require a large value of $K$.   
}

\wham{An Elementary Ergodic Theorem}	We begin with an alternative characterization of $\barh = \langle h \rangle $.

\notes{I don't want to define this here:
	with $h_t = h(\qsaprobe_t)$}

The ergodic theorems presented here are based upon a stationary relaxation of the solution to the ODE $\ddt \Phi_t = W \Phi_t$  (recall \eqref{e:qsaDynLinear}).    We do not require (QSA0) here, but we do require that the frequencies $\{\omega_i : 1\le i\le K\}$ are \textit{distinct}.

Suppose that the initial conditions $\{\Phi_0^i : 1\le i\le K\}$ are chosen \textit{randomly} with i.i.d.\ values uniform on the unit circle $S\subset\Co$.   It follows that $\{\Phi_t^i : 1\le i\le K\}$ remain i.i.d.\  with uniform distribution for each $t\in \Re$, so that $\bfPhi =\{ \Phi_t : -\infty <t<\infty\}$ is a stationary Markov process.   Stationarity implies the Law of Large Numbers:   for any Borel measurable function $H\colon S^K\to \Co$,
\begin{equation}
	\lim_{T\to\infty}  \frac{1}{T} \int_0^T H(\Phi_t) \,  dt  =\lim_{T\to\infty}  \frac{1}{T} \int_0^T H(\Phi_{-t}) \,  dt  = \textrm{E}[ H(\Phi_0) ] =  \int_{\prstate}  H(z) \, dz \qquad a.s.
	\label{e:LLN_Phi}
\end{equation}
Conditioning on $\Phi_0 = z^0$ we can extend this limit to a.e.\ initial condition $z^0\in \prstate\eqdef S^K$.

For this stationary realization of $\bfPhi$,
we henceforth regard $\{ \qsaprobe_t = G(\Phi_t) :  t \in\Re \}$ as a  \textit{steady-state} realization  of the probing signal.

Consider any real-valued function $h$ of the probing signal \eqref{e:qSGD_probe0}.   
We have $h(\qsaprobe_t) = H(\Phi_t)$ with $H \eqdef h\circ G$ from \eqref{e:qsaDynLinear}, 
which then implies a law of large numbers.   A characterization of the limit is obtained in the following, along with a relaxation of the smoothness assumption imposed in \Cref{t:BakerCor}.

\begin{proposition}
	\label[proposition]{t:arcsinLaw}
	Suppose that
	the probing signal is defined using \eqref{e:qSGD_probe0},  with distinct  frequencies   $\{\omega_i : 1\le i\le K\}$. 
	Consider any Borel measurable function  $h\colon\Re^m\to\Re$ satisfying $ \Expect[ |h( G_0(X) ) |] <\infty$,   
	where the $K$-dimensional random vector has independent  entries,  with common distribution equal to the arcsine law on $[-\pi,\pi]$.
	
	The following limits then hold for a.e.\ set of phase angles $\{\phi_i\}$: 
	\begin{equation}
		\lim_{T\to\infty}  \frac{1}{T} \int_0^T h(\qsaprobe_t) \, dt   
		=
		\lim_{T\to\infty}  \frac{1}{T} \int_0^T h(\qsaprobe_{-t}) \, dt   
		=  \Expect[ h(G_0(X))]
		\label{e:time_reversal_stats}
	\end{equation}
	If in addition the function $h$ is continuous, then  \eqref{e:time_reversal_stats} holds for each initial condition,  and convergence is uniform in the initial phase angles.
\end{proposition}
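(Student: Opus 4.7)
The plan is to reduce the proposition to an ergodic theorem for the translation flow $\bfPhi$ on the torus $S^d \subset \Co^d$ introduced just above the statement. Write $h(\qsaprobe_t) = H(\Phi_t)$ with $H = h \circ G$ and $G(z) = \tfrac{1}{2}V(z + z^{-1})$ as in \eqref{e:qsaDynLinear}. When the phases $\{\phi_i\}$ are themselves drawn i.i.d.\ uniformly on $[0,1]$, the law of $\Phi_t$ is the product uniform measure $\uppi$ on $S^d$ for every $t$, so $\bfPhi$ is strictly stationary. The LLN \eqref{e:LLN_Phi}, combined with the integrability hypothesis $\Expect[|h(VX)|]<\infty$, then yields
\[
\frac{1}{T}\int_0^T h(\qsaprobe_t)\,dt \longrightarrow \int_{S^d} h(G(z))\,\uppi(dz)\quad \text{a.s.}
\]
The change of variables $z_i = e^{2\pi i U_i}$ with $U_i$ uniform converts the right-hand side to $\Expect[h(VX)]$, since $z_i + z_i^{-1} = 2\cos(2\pi U_i)$ and $\cos(2\pi U_i)$ is arcsine distributed.

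To pass from a.s.\ convergence under this random-phase coupling to convergence for a.e.\ fixed phase vector, I would invoke Fubini: the Borel set of phase vectors for which the sample-path limit fails has zero measure under the uniform law on $[0,1]^d$, hence is Lebesgue-null, which is exactly the ``a.e.\ set of phase angles'' statement. The backward-time claim is immediate: $\Phi_{-t}$ obeys the same linear flow with $W$ replaced by $-W$, and the product uniform measure on $S^d$ remains invariant under the reversed flow, so the identical stationarity argument yields the same limit $\Expect[h(VX)]$.

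For the strengthened claim under continuity of $h$---uniform convergence in the initial phases and convergence for every initial condition---the plan is to replace the probabilistic LLN with the Weyl equidistribution theorem. Under \eqref{e:logFreq}, the frequencies $\{\omega_i\}$ are $\mathbb{Q}$-linearly independent, so the orbit $\{\Phi_t : 0\leq t\leq T\}$ is equidistributed on $S^d$ as $T\to\infty$ starting from every initial point. The standard route is to first verify the limit on trigonometric polynomials $e^{2\pi i\langle n,\cdot\rangle}$ via the bound $|\int_0^T e^{2\pi i\langle n,\omega\rangle t}\,dt| \leq (\pi|\langle n,\omega\rangle|)^{-1}$, which is uniform in the initial phases, and then to approximate the continuous function $h\circ G$ on the compact torus by trigonometric polynomials via Stone--Weierstrass. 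The main obstacle I anticipate is largely bookkeeping: identifying the pushforward $\uppi \circ G^{-1}$ with the law of $VX$ is elementary, and the principal conceptual point is that a.s.\ convergence under random phases cannot be upgraded to every-phase convergence when $h$ is merely measurable, so continuity of $h$ is precisely the hypothesis that makes the uniform Weyl argument go through and produces the stronger conclusion.
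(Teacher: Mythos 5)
Your proof of the first part (measurable $h$, a.e.\ phases) is the same as the paper's: both invoke the stationary LLN \eqref{e:LLN_Phi} for the flow $\bfPhi$ on $S^d$, identify the limit $\int_{S^d} h\circ G \, d\uppi$ with $\Expect[h(VX)]$ via the arcsine law, handle backward time by noting that $\uppi$ is also invariant under $-W$, and deduce the ``a.e.\ phase'' statement by conditioning/Fubini. That part needs no further comment.

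For the continuous-$h$ strengthening the two proofs diverge in a genuine way. You propose the classical Weyl-equidistribution route: verify the limit on trigonometric characters $e^{2\pi j\langle n,\cdot\rangle}$ using the explicit bound $|\tfrac{1}{T}\int_0^T e^{2\pi j\langle n,\omega\rangle t}\,dt|\le (\pi|\langle n,\omega\rangle|T)^{-1}$ (uniform in phase), then push the limit through to arbitrary continuous $h\circ G$ by Stone--Weierstrass density on the compact torus. The paper instead shows that the family of Ces\`aro averages $\{\barh_T : T>0\}$, viewed as functions of the initial phase $\Phi_0\in\Upomega$, is equicontinuous (using that $e^{Wt}$ is an isometry on $\Co^d$ so that $G(e^{Wt}\cdot)$ has a $t$-uniform modulus of continuity); combining equicontinuity with the already-established pointwise a.e.\ convergence to the constant $\langle h\rangle$ yields uniform convergence on $\Upomega$ by an Arzel\`a--Ascoli subsequence argument. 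Both arguments are correct. Your route is more self-contained and more quantitative---it doesn't presuppose the a.e.\ LLN for the continuous case, and it exhibits the dependence on small divisors $\langle n,\omega\rangle$ that anticipates the Baker-theorem estimates used later in \Cref{t:hah}---whereas the paper's route is shorter given what has already been proved, since it simply upgrades a.e.\ pointwise convergence to everywhere and uniform convergence in one stroke via equicontinuity.
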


\begin{proof}
	The result  \eqref{e:time_reversal_stats} for a.e.\ initial condition $\Phi_0$ is immediate from \eqref{e:LLN_Phi},  
	the use of $G_0$ to define the probing signal in \eqref{e:qSGD_probe0}, and the definition
	$ G(z) \eqdef G_0(  (z+ z^{-1})/2) $.
	
	To prove the stronger result for continuous $h$,  we make explicit the dependency of the average on the initial condition:
	\[
	h_t (\Phi_0)   \eqdef  h( G(e^{Wt}\Phi_0) )  =  h(\qsaprobe_t)  \,,
	\qquad  
	\barh_T (\Phi_0)   \eqdef  \frac{1}{T}\int_0^T h_t (\Phi_0)   \, dt
	\]
	We show that $\{ h_t : t>0\}$   and  $\{ \barh_t : t>0\}$  are each equicontinuous families of functions on $\prstate$: Since $e^{Wt}$ is an isometry on $\Co^K$ for any $t$, there exists a $B_v<\infty$ such that:
	\begin{equation}
		\|G(e^{Wt}z) - G(e^{Wt}z^\prime)\| < B_v\|z-z^\prime\|, \quad \text{for $z,z^\prime \in  \prstate$ and $t\ge0$}
		\label{e:isometry_of_expW}
	\end{equation}
	Now, $h$ is uniformly continuous since this  its domain is compact  (we can take its domain to be the range of $G$). 
	Consequently, for each $\epsy>0$, there exists $\delta>0$ such that for $x,x^\prime$ in the domain of $h$,
	\[
	\| h( x ) -  h( x^\prime ) \| < \delta, \quad \text{if } \|x-x^\prime\| < \epsy    
	\]
	Thus, by \eqref{e:isometry_of_expW},    
	\[
	\| h_t( z ) -  h_t( z^\prime ) \| < \delta, \quad \text{if } \|z-z^\prime\| < \epsy/B_v 
	\]
	Equicontinuity of $\{ h_t : t>0\}$   and  $\{ \barh_t : t>0\}$ on $\prstate$ follows from these bounds. 
	Pointwise convergence of  $\barh_T $ to $\langle h \rangle$ for a.e.\ $\Phi_0 \in \prstate$, as $T \to \infty$, then implies convergence from each initial condition, and also uniform convergence:
	\[
	\lim_{T \to \infty} \max_{\Phi_0 \in \prstate} \|\barh_T(\Phi_0) -  \langle h \rangle \| =0      
	\]
\end{proof}

This proposition will be refined in the following.

\smallskip

\begin{subequations}
	
	If $g\colon\Re^K\to\Re$ is analytic in a neighborhood of $z\in\Re^K$, we denote the mixed partials by
	\[
	g^{(\alpha)} (z) 
	=  \frac{\partial^{\alpha_1}}{ \partial z_1^{\alpha_1}} \cdots  \frac{\partial^{\alpha_K}}{ \partial z_K^{\alpha_K}}  g\,  (z)\,,\quad  \alpha \in \nat^K\, .
	\] 
	Denote $z^\alpha = \prod_i z_i^{\alpha_i}$,  and  $\alpha! =\prod_i \alpha_i ! $ for $ \alpha \in \nat^K $ with  $0! \eqdef 1$.    This notation is used to express the multivariate Taylor series formula in the following:  	
	\begin{lemma}
		\label[lemma]{t:TaylorSeries}
		Suppose that $g\colon \Re^K\to\Re$ 
		is analytic in a neighborhood $\clN_g$ of the hypercube $[-1,1]^K$.  
		Then there is $r_g>1$ such that whenever  $z \in[-1,1]^K$ and $0\le r \le r_g$   we have
		\begin{align}
			g(rz ) = g(0) +  & \sum_{n=1}^\infty  r^n \sum_{|\alpha|=n} \frac{1}{\alpha !} z ^\alpha   g^{(\alpha)} (0)
			\label{e:hTS}
			\\
			\text{where the sum converges absolutely:}
			\quad
			& \sum_{n=1}^\infty  r_g^n \sum_{|\alpha|=n} \frac{1}{\alpha !}  |  g^{(\alpha)} (0) |<\infty
			\label{e:absTS}
		\end{align} 
	\end{lemma}
\end{subequations}

A useful representation requires more notation:
\begin{equation}
	g_0(\qsaprobe^0_t)  \eqdef
	h(\qsaprobe_t)    = h(  G_0(\qsaprobe^0_t)) \,,\quad t\ge 0\,,
	\label{e:g0_notation}
\end{equation}
with $\qsaprobe^{0}_{t,i} = \cos(2\pi[\omega_i t + \phi_i ] ) $ as defined by \eqref{e:qSGD_probe0}.

It follows that $g_0$ is analytic if both $h$ and $G_0$ also are. Along with this new notation, \Cref{t:TaylorSeries} provides a  representation of both $h(\qsaprobe_t)$ and the integral $\tilh(\qsaprobe_t) = h(\qsaprobe_t) -\barh$. 

For $\alpha \in \nat^K$ we denote
\begin{equation}
	\begin{aligned}
		\qsaprobe_t^\alpha &= \prod_{i=1}^K   \cos(2\pi[\omega_i t + \phi_i])^{\alpha_i}
		=  2^{-|\alpha|}  \prod_{i=1}^K ( \Phi_t^i  +{  \Phi_t^i}^* )^{\alpha_i}  
		\\
		\widebar{\qsaprobe^\alpha}          & =  \langle \qsaprobe^\alpha  \rangle\,, \qquad\tilqsaprobe_t^\alpha = \qsaprobe_t^\alpha -   \widebar{\qsaprobe^\alpha}   
	\end{aligned} 
	\label{e:qsaprobe-alpha}
\end{equation}
where $|\alpha| =\sum \alpha_i$ (the $\ell_1$-norm),    and ${  \Phi_t^i}^*  $ denotes the complex conjugate.  
Let $B$ denote the set of all $K$-dimensional row vectors with entries in $\{-1,1\}$.  For fixed $\alpha \in \nat^K$,  $|\alpha|\neq 0$, we decompose $b\in B$ as follows:
$
b = (b^1, b^2,\dots, b^K)
$,
where $b^i$ has length $\alpha_i$ for each $i$,  and necessarily has entries in $\{-1,1\}$. 
These are used to define the frequency and phase variables
\begin{equation}
	\omega_{\alpha,b} = \sum_{i=1}^K   \alpha_i \omega_i \sum_k b^i_k 
	\qquad
	\phi_{\alpha,b} = \sum_{i=1}^K   \alpha_i \phi_i \sum_k b^i_k 
	\label{e:omega_ab}
\end{equation}
Let $B_0^\alpha\subset B$  denote the set of vectors $b\in B$ for which $\omega_{\alpha,b} =0$.   Under the assumption that  the frequencies  are linearly independent over the rationals, this is equivalent to the following requirement:
\[
\alpha_i \sum_k b^i_k = 0\,\quad \text{for each $1\le i\le K$}
\]

\notes{CL: added "in \eqref{e:qsaprobe-alpha}" to \Cref{t:qsaInt}
	\\
	Great!  }

\begin{lemma}
	\label[lemma]{t:qsaInt}
	The signal $\{\qsaprobe^\alpha_t\}$ in \eqref{e:qsaprobe-alpha}, its mean,  and its centered integral admit the representations,
	\begin{equation}
		\begin{aligned}
			\qsaprobe^\alpha_t &= 2^{-|\alpha|} \sum_{b\in B^\alpha}  \qsaprobe_t^{\alpha,b}
			\,,\qquad\qquad
			\widebar{\qsaprobe^\alpha}   =  2^{-|\alpha|} \sum_{b\in B_0^\alpha}  \exp(j   \phi_{\alpha,b} )
			\\
			\qsaprobe^{\alpha\intI}_t &= 2^{-|\alpha|} \sum_{b\not\in B_0^\alpha}  \frac{1}{2\pi j \omega_{\alpha,b}}  \qsaprobe_t^{\alpha,b}  \,,   \qquad
			\qsaprobe_t^{\alpha,b} =  \exp(2\pi j [ \omega_{\alpha,b}t + \phi_{\alpha,b} ]) 
		\end{aligned} 
		\label{e:probeIntDecomposed}
	\end{equation}
\end{lemma}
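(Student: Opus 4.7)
\textit{Proof plan for \Cref{t:qsaInt}.}
The plan is to reduce everything to a single identity expressing each factor $\cos(2\pi[\omega_i t+\phi_i])$ as a half-sum of two complex exponentials, then expand the product and read off the three claimed formulas. First, starting from $\qsaprobe^i_t=\cos(2\pi[\omega_i t+\phi_i])=\tfrac12\bigl(e^{2\pi j(\omega_i t+\phi_i)}+e^{-2\pi j(\omega_i t+\phi_i)}\bigr)$, I would write
\[
\bigl(\qsaprobe^i_t\bigr)^{\alpha_i}
=2^{-\alpha_i}\!\!\sum_{b^i\in\{-1,1\}^{\alpha_i}}\exp\!\Bigl(2\pi j\bigl(\textstyle\sum_k b^i_k\bigr)(\omega_i t+\phi_i)\Bigr),
\]
using the algebraic identity $(x+y)^{n}=\sum_{\varepsilon\in\{0,1\}^n}\prod_{k}x^{1-\varepsilon_k}y^{\varepsilon_k}$ with $(x,y)$ the two exponentials. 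Taking the product over $i=1,\dots,d$ and combining exponents yields the first identity in \eqref{e:probeIntDecomposed},
\[
\qsaprobe^{\alpha}_t=2^{-|\alpha|}\!\!\sum_{b\in B^\alpha}\exp\!\bigl(2\pi j[\omega_{\alpha,b}t+\phi_{\alpha,b}]\bigr)=2^{-|\alpha|}\!\!\sum_{b\in B^\alpha}\qsaprobe^{\alpha,b}_t,
\]
with $\omega_{\alpha,b},\phi_{\alpha,b}$ as in \eqref{e:omega_ab} (writing these sums out verifies agreement with the displayed definitions up to harmless rewriting).

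Second, for the mean $\widebar{\qsaprobe^\alpha}=\langle\qsaprobe^\alpha\rangle$, I apply the ergodic limit termwise. Each term is a pure exponential $e^{2\pi j[\omega_{\alpha,b}t+\phi_{\alpha,b}]}$, whose time-average is $0$ if $\omega_{\alpha,b}\neq 0$ and equals $e^{2\pi j\phi_{\alpha,b}}$ if $\omega_{\alpha,b}=0$. By the linear independence of $\{\omega_i\}$ over $\mathbb{Q}$ imposed in \eqref{e:logFreq}, the condition $\omega_{\alpha,b}=0$ is equivalent to $b\in B_0^\alpha$ as defined just before the lemma. Summing these constant-mode contributions gives
$\widebar{\qsaprobe^\alpha}=2^{-|\alpha|}\sum_{b\in B_0^\alpha}e^{2\pi j\phi_{\alpha,b}}$, which matches the second identity (interpreting the phase notation in the statement with its implicit $2\pi$).

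Third, for the centered integral $\qsaprobe^{\alpha\intI}_t$, recall from \eqref{gh-notaton}--\eqref{e:h_double_int} that it equals $\int_0^t\widetilde{\qsaprobe^\alpha}_s\,ds$ minus a constant chosen so that the whole thing has zero mean. Subtracting the mean removes exactly the $b\in B_0^\alpha$ terms, leaving
\[
\int_0^t\widetilde{\qsaprobe^\alpha}_s\,ds=2^{-|\alpha|}\!\!\sum_{b\notin B_0^\alpha}\int_0^t e^{2\pi j[\omega_{\alpha,b}s+\phi_{\alpha,b}]}\,ds
=2^{-|\alpha|}\!\!\sum_{b\notin B_0^\alpha}\frac{1}{2\pi j\omega_{\alpha,b}}\bigl[\qsaprobe^{\alpha,b}_t-\qsaprobe^{\alpha,b}_0\bigr],
\]
using $\omega_{\alpha,b}\neq 0$ to justify elementary integration. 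Each summand is itself a pure oscillation of nonzero frequency, so it already has zero time-average; hence the final recentering constant absorbs precisely $2^{-|\alpha|}\sum_{b\notin B_0^\alpha}\frac{-\qsaprobe^{\alpha,b}_0}{2\pi j\omega_{\alpha,b}}$, producing the third identity.

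The whole argument is essentially bookkeeping once the initial cosine-to-exponential expansion is laid out; the only nontrivial input is the linear-independence hypothesis \eqref{e:logFreq}, which is invoked to identify the zero-mean frequencies with the combinatorial set $B_0^\alpha$. I do not expect any step to require delicate estimates (the bounds from \Cref{t:BakerCor} on the magnitude of such integrals are not needed for the algebraic identities themselves, only for subsequent applications of this lemma).
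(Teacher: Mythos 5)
Your proof is correct and follows essentially the same route as the paper's: expand each cosine factor into a half-sum of conjugate complex exponentials, multiply out the product over $i$ (giving the first identity in \eqref{e:probeIntDecomposed}), observe that the time-average of each pure exponential mode is zero unless $\omega_{\alpha,b}=0$ (which by \eqref{e:logFreq} characterizes $B_0^\alpha$) to read off $\widebar{\qsaprobe^\alpha}$, and finally integrate term-by-term with the recentering constant absorbing the constant offsets. The paper's proof is terser — it declares the first identity ``purely a change of notation'' and compresses the remaining two lines — whereas you spell out the multinomial expansion and the termwise ergodic limit; both arguments are the same in substance, and your extra detail is a net positive. You also correctly flag the small notational discrepancy in the statement: the mean term should read $\exp(2\pi j\,\phi_{\alpha,b})$ to be consistent with the definition $\qsaprobe_t^{\alpha,b}=\exp(2\pi j[\omega_{\alpha,b}t+\phi_{\alpha,b}])$, an omission of the $2\pi$ that is carried into the paper's proof as well.
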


\begin{proof} The representation for $\qsaprobe^\alpha_t$ is purely a change of notation. 
	We have 
	$\qsaprobe_t^{\alpha,b} =   \exp(j \phi_{\alpha,b} ) $ (independent of $t$) when $b\in B_0$,   and 
	$ \langle  \qsaprobe^{\alpha,b} \rangle =0$ otherwise.   
	Consequently,
	\[
	\begin{aligned} 
		\widebar{\qsaprobe^\alpha}  
		&=  2^{-|\alpha|}   \sum_{b\in  B^\alpha}  \langle  \qsaprobe^{\alpha,b} \rangle   
		=  2^{-|\alpha|} \sum_{b\in B_0^\alpha}  \exp(j   \phi_{\alpha,b} ) 
		\\
		\qsaprobe^{\alpha\intI}_t &= \int^t_0 [\qsaprobe^\alpha_r  -  \widebar{\qsaprobe^\alpha} ] \,dr  - \bar{\qsaprobe}^{\alpha\intI}_t
		=  2^{-|\alpha|} \sum_{b\not\in B_0^\alpha}  \int^t_0  \qsaprobe_r^{\alpha,b}  \, dr - \bar{\qsaprobe}^{\alpha\intI}_t
		= 2^{-|\alpha|} \sum_{b\not\in B_0^\alpha}  \frac{1}{2 \pi j \omega_{\alpha,b}}  \qsaprobe_t^{\alpha,b}
	\end{aligned}
	\]
\end{proof}  

Before we can state the main result of this subsection we require a few more definitions.   Let  $B_+^\alpha\subset B$  denote the set of vectors $b\in B$ for which $\omega_{\alpha,b} >0$,   and  $B_-^\alpha = \{-b :  b\in  B_+^\alpha\}$.  We also require the following extension of the notation in \eqref{e:probeIntDecomposed}:
\[
z^{\alpha,b} \eqdef  z_1^{n^{\alpha,b}_1} \cdots z_K^{n^{\alpha,b}_K}\,,\quad  
\text{with } \ \ n^{\alpha,b}_i=   \alpha_i  \sum_{k=1}^{\alpha_i}  b_k^i
\,,
\quad   z\in\Co^K \setminus \{0\}
\]
where the origin is avoided because $n^{\alpha,b}_i<0$ for some $(i,\alpha, b)$.  The following properties will be useful:
\begin{equation}
	z^{\alpha,-b} = 1/ z^{\alpha,b}   \quad \text{and hence}  \quad   z^{\alpha,-b} = { z^{\alpha,b} }^* \quad \text{whenever $z\in\prstate$.}
	\label{e:z+-}
\end{equation}
where the star denotes complex conjugate.

\begin{subequations}
	
	\notes{ \rd{CL:In our submission, this statement was  "Suppose that the assumptions of \Cref{t:TaylorSeries} hold for the function $h\colon\Co^d\to\Co$ and suppose that $h$ is real-valued on $\prstate = S^K$......". I changed it for several reasons:
			1) in \Cref{t:TaylorSeries}, the domain of $g$ is $\Re^K$ and not $\Re^m$.
			2) The domain of $h$ is Real.
			Right after \eqref{e:g0_notation} I have a statement saying that analyticity of $h$ and $G_0$ imply that $g_0$ is also analytic. (QSA0) states that $G_0$ is analytic as well. I added a statement in the proof saying that This assumptions imply
			$g_0$ is analytic in a neighborhood $\clN$ of the hypercube $[-1,1]^K$ just like needed for the Taylor series of \Cref{t:TaylorSeries}.}
		\\
		This all sounds reasonable.  I think the bug might originate from our $d=K$ convention in the spring.    I need to give the proof one more careful read.}
	
	\begin{theorem}
		\label[theorem]{t:hah} 
		Suppose that the function $h\colon\Re^m\to\Re$ is analytic in a neighborhood $\clN_h$ of the hypercube $[-1,1]^m$, and suppose that $\qsaprobe_t$ is the $m$-dimensional probing signal \eqref{e:qSGD_probe0} satisfying (QSA0).
		The following conclusions then hold:
		\begin{align}
			\tilh(\qsaprobe_t) \eqdef h(\qsaprobe_t)- \barh & = \sum_{n=1}^\infty   \sum_{|\alpha|=n} \frac{1}{\alpha !}     g_0^{(\alpha)} (0)    2^{-|\alpha|} \sum_{b \in B^\alpha_+}  2  \cos(2\pi[\omega_{\alpha,b} t + \phi_{\alpha,b}])
			\label{e:tilh}
			\\
			\int_{t_0}^{t_1} \tilh(\qsaprobe_t) \, dt  & = \hah(\Phi_{t_1}) - \hah(\Phi_{t_0}) \,, \quad 0\le t_0\le t_1<\infty\,, 
			\label{e:Poisson}
			\\
			\text{where} \quad \hah(z)  &=     -\frac{1}{ 2\pi  }
			\sum_{n=1}^\infty   \sum_{|\alpha|=n} \frac{1}{\alpha !}    g_0^{(\alpha)} (0)      2^{-|\alpha|}   \sum_{b \in B_+^\alpha}  \frac{1}{   j \omega_{\alpha,b}}  [ z^{\alpha,b} 
			-  z^{\alpha,-b}  ]	 
			\label{e:hahDefined}
		\end{align}
		where $g_0:\Re^K \to \Re$ is given by \eqref{e:g0_notation}.
		
		Moreover, the function $\hah$ is analytic in the domain $\{z\in\Co^K : 0<\|z\| < r_{g_0}\}$, and admits the following representation when restricted to $\prstate$:
		\begin{equation}
			\hah(\Phi_t) =  -   \frac{1}{ \pi  } \sum_{n=1}^\infty   \sum_{|\alpha|=n} \frac{1}{\alpha !}    g_0^{(\alpha)} (0)    2^{-|\alpha|} \sum_{b \in B_+^\alpha}   \frac{1}{ \omega_{\alpha,b}}    \sin(2\pi[\omega_{\alpha,b} t + \phi_{\alpha,b}])
			\label{e:hah2}
		\end{equation}
	\end{theorem}
	
	As mentioned after \eqref{e:Poissoneqdef}, the function $\hah$ solving \eqref{e:Poisson} is known as the solution to Poisson's equation with forcing function $h$.  This terminology is standard in ergodic theory for Markov processes. Since $\hah$ is also analytic we are assured of multiple integrals  that are also bounded in time:
	\begin{equation}
		\begin{aligned}
			\hah(\Phi_T)= -\int^T_0 \tilh(\qsaprobe_t) \,  dt + \hah(\Phi_0)     \,,
			\qquad
			\hahah(\Phi_T)=  - \int^T_0 \hah(\Phi_t)  dt + \hahah(\Phi_0)
		\end{aligned} 
		\label{e:h_double_int}
	\end{equation}
	Both are required in the analysis supporting our main results:  recall the functions   \eqref{e:hatDefs} used in  \Cref{t:P-meanflow} 
	to define the terms in the p-mean flow representation.
	
\end{subequations}

The following corollary will prove useful.    Note that we relax the assumption of analyticity.

\begin{corollary}
	\label[corollary]{t:hah-g-orth}
	Suppose that (QSA0) holds,  and that  $g,h\colon \Re^m\to\Re$ are continuous functions.   Suppose moreover that there is a zero-mean solution to Poisson's equation  $\hah $,   solving \eqref{e:Poisson} for any $\Phi_0\in\prstate$, and any $ 0\le t_0\le t_1<\infty$.    Then,   $\langle g(\qsaprobe), \hah(\Phi) \rangle =0$.
\end{corollary}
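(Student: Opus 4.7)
My plan is to exploit a natural involutive symmetry of the flow on $\Upomega$ and the probing signal, combined with uniqueness of Poisson solutions up to constants. Let $\iota\colon\Upomega\to\Upomega$ denote componentwise inversion, $\iota(z)=z^{-1}$. Two observations are central: first, $G(\iota(z))=G(z)$, since $G(z)=\half V(z+z^{-1})$, so $g(\qsaprobe_t)$ is insensitive to $\iota$ when viewed as a function of $\Phi_t$; second, the Haar measure $\uppi$ on $\Upomega$ is $\iota$-invariant. The first nontrivial step is to show that $\hat{h}^\iota(z)\eqdef\hat{h}(z^{-1})$ satisfies the \emph{negated} Poisson equation,
\[
\hat{h}^\iota(\Phi_{t_1})-\hat{h}^\iota(\Phi_{t_0}) \;=\; -\int_{t_0}^{t_1}\tilh(\qsaprobe_r)\,dr.
\]
The key computation is that componentwise inversion along the forward trajectory coincides with the reverse-time flow started at the inverted point, $\Phi_t^{-1}=e^{-Wt}\Phi_0^{-1}$. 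Applying the Poisson identity for $\hat{h}$ along the orbit of $\Phi_0^{-1}$ and substituting $r\mapsto -r$ in the integral yields the displayed identity, using $G(\Phi_r^{-1})=G(\Phi_r)=\qsaprobe_r$ and $\tilh=h$ (since $\barh=0$).

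Next I would appeal to uniqueness. Condition \eqref{e:logFreq} makes $\{\omega_i\}$ linearly independent over $\mathbb{Q}$, so Kronecker's theorem yields density of every orbit $\{e^{Wt}z:t\in\Re\}$ in $\Upomega$. Since $\hat{h}$ and $-\hat{h}^\iota$ are continuous solutions of the same Poisson equation for every initial condition, their difference is constant along every orbit, and density plus continuity promote this to a global constant: $\hat{h}(z^{-1})=-\hat{h}(z)-C$ on $\Upomega$ for some $C\in\Re$. To conclude, the ergodic identity underlying Proposition \ref{t:arcsinLaw}, applied to the continuous function $z\mapsto g(G(z))\hat{h}(z)$ on $\Upomega$, gives
\[
\langle g(\qsaprobe)\hat{h}(\Phi)\rangle \;=\; \int_\Upomega g(G(z))\hat{h}(z)\,\uppi(dz).
\]
Changing variable $z\mapsto z^{-1}$, which preserves $\uppi$ and $G$, and inserting $\hat{h}(z^{-1})=-\hat{h}(z)-C$, the right-hand side becomes $-\langle g(\qsaprobe)\hat{h}(\Phi)\rangle-C\barg=-\langle g(\qsaprobe)\hat{h}(\Phi)\rangle$, since $\barg=0$. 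Hence the common value is zero.

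The main obstacle is the first step: the involution $\iota$ does not commute with the flow but converts forward orbits into reverse orbits starting at the inverted point, so the signs and limits in the Poisson integral must be tracked carefully through the substitution $r\mapsto -r$, including an extension of the Poisson cocycle to negative times by the group property of the flow. Once that symmetry identity is established, uniqueness and measure-invariance close the argument cleanly, and no regularity on $g$ or $h$ beyond continuity is required.
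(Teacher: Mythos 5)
Your proposal is correct, and it takes a genuinely different route from the paper. The paper's proof proceeds in three steps: (a) Lemma~\ref{t:hah-g-orth-analytic} handles the analytic case by expanding $\hah$ via the explicit Fourier representation \eqref{e:hah2} and checking directly that every $\langle\cos(\cdot)\sin(\cdot)\rangle$ term vanishes; (b) Lemma~\ref{t:hah-epsy} introduces the exponential regularization $\hah^\epsy(z)=\int_0^\infty e^{-\epsy t}\tilh(e^{Wt}z)\,dt$ and establishes uniform convergence to $\hah$; (c) the general case is reached by two applications of Stone--Weierstrass (first approximating $h$ by polynomials $h_n$ and passing through $\hah^\epsy_n\to\hah_n$, then approximating $g$). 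Your proof instead isolates a structural symmetry: since $G(z^{-1})=G(z)$, the involution $\iota(z)=z^{-1}$ commutes with the observable, converts the flow to its time-reversal, and preserves Haar measure; uniqueness of Poisson solutions (via Kronecker density of orbits, justified by \eqref{e:logFreq}) forces $\hah\circ\iota=-\hah+C$, and $\iota$-invariance of $\uppi$ together with $\barg=0$ closes the argument in one change of variables. This is shorter, avoids Fourier expansions and double approximation, needs nothing beyond continuity of $g$, $h$, $\hah$, and makes transparent why the result is true --- it is the same time-reversal mechanism the paper exploits separately in Lemma~\ref{t:Back_Poisson} for forward--backward filtering, which your proof unifies with the PR-averaging result. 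One small point to tighten when writing it up: to get the negated Poisson identity on $[t_0,t_1]$ with $0\le t_0\le t_1$, either extend the cocycle to negative times by the group property (as you note) or, slightly cleaner, apply the hypothesized identity \eqref{e:Poisson} with the shifted initial condition $\Psi_0=\Phi_{t_1}^{-1}$ and run the flow forward for time $t_1-t_0$; both work, but the latter requires no extension at all.
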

The proof of the theorem and its corollary are postponed to the end of this subsection.

It is clear from \eqref{e:hahDefined}
that we require a lower bound on $| \omega_{\alpha,b} |$ for $b\in B_+^\alpha$ in order to justify that $\hah$ is analytic in a neighborhood of $\prstate$.   Useful bounds are possible through application of extensions of Baker's Theorem concerning linear independence of algebraic numbers  \cite{mat00,bug18}.

The assumption that the  $\{\omega_i\}$ defined in \eqref{e:logFreq} are  linearly independent over the field of rational numbers is equivalent to the requirement that the rational numbers $\{r_i = a_i/b_i\}$ are  \textit{multiplicatively independent}.  That is, for any integers $\{ n_i : 1\le i\le K\} \subset\intgr$, the equation  
\[
r_1^{n_1}
r_2^{n_2} \cdots
r_K^{n_K} = 1
\]
implies that $n_i=0$ for each $i$.    This is the language used in much of the literature surrounding Baker's Theorem.
The following follows from \cite[Thm. 1.8]{bug18}:
\begin{theorem}
	\label[theorem]{t:bug18-1.8}
	Under the assumptions of \Cref{t:BakerCor}  there is a constant $C>0$ depending only on   $\{a_i,b_i : 1\le i\le K\}$  
	such that whenever $\omega_{\alpha,b}  \neq 0$,
	\[
	|\omega_{\alpha,b} |   \ge \beta_\alpha^{-C}\,,  \quad \beta_\alpha = \max\{3, \alpha_1,\dots,\alpha_d\}
	\]
	\qed
\end{theorem}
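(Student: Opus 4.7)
The plan is to reduce the bound to a direct application of Matveev's refinement of Baker's theorem, in the form stated as \cite[Thm.~1.8]{bug18}; everything else is a bookkeeping translation of notation.

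First, I would unpack the definition of $\omega_{\alpha,b}$. Using $\omega_i = \log(a_i/b_i)$ together with \eqref{e:omega_ab},
\begin{equation*}
    \omega_{\alpha,b} \;=\; \sum_{i=1}^d m_i \log(a_i/b_i), \qquad m_i \eqdef \alpha_i \sum_{k=1}^{\alpha_i} b^i_k \in \intgr,
\end{equation*}
so $\omega_{\alpha,b}$ is an integer linear form in logarithms of rationals. Since each $b^i_k \in \{\pm 1\}$, one has the crude bound $|m_i| \le \alpha_i^2 \le \beta_\alpha^2$, hence $H \eqdef \max\{3,|m_1|,\dots,|m_d|\} \le \beta_\alpha^2$.

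Second, I would verify the hypotheses of the cited theorem: under \eqref{e:logFreq} the rationals $r_i = a_i/b_i$ are multiplicatively independent and in particular nonzero and not equal to $\pm 1$. When $\omega_{\alpha,b}\neq 0$, the Baker--Matveev estimate \cite[Thm.~1.8]{bug18} (see also \cite{mat98,mat00}) then produces constants $C_0>0$ and $\kappa_0>0$, depending only on $d$ and on the heights of the fixed rationals $\{a_i,b_i\}$ but not on $(\alpha,b)$, such that
\begin{equation*}
    |\omega_{\alpha,b}| \;\ge\; C_0 \cdot H^{-\kappa_0}.
\end{equation*}
Combining with $H\le \beta_\alpha^2$ yields $|\omega_{\alpha,b}| \ge C_0\, \beta_\alpha^{-2\kappa_0}$, and since $\beta_\alpha\ge 3$ is bounded away from zero, a sufficiently large choice of $C$ absorbs the constant $C_0$ to give the stated bound $|\omega_{\alpha,b}| \ge \beta_\alpha^{-C}$.

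The proof is therefore almost entirely mechanical once Baker--Matveev is invoked; the substance is contained in that cited result. The genuine obstacle is not in this translation but in the \emph{quantitative} behaviour of $C$ in the dimension $d$: present upper bounds on $\kappa_0 = \kappa_0(d)$ grow at least exponentially in $d$, which is precisely the source of the curse of dimensionality flagged in the discussion following \Cref{t:BakerCor} and observed empirically in \Cref{f:Ackley_08507_cov}.
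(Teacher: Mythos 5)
Your proof is correct and follows the same route the paper intends: the paper states the result as an immediate consequence of \cite[Thm.~1.8]{bug18} (the trailing \qed in the theorem statement signals the citation is itself the proof), and your proposal merely spells out the routine translation---identifying $\omega_{\alpha,b}$ as an integer linear form in the fixed logarithms, bounding the coefficients by $|m_i|\le\beta_\alpha^2$, and absorbing the multiplicative constant $C_0$ into the exponent via $\beta_\alpha\ge 3$---that the paper leaves implicit. Your closing remark about the growth of $C$ in $d$ also matches the discussion the paper gives immediately after \Cref{t:bug18-1.8}.
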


\Cref{t:BakerCor}  follows immediately from \Cref{t:hah}, for which the proof is   given next:

\begin{proof}[Proof of   \Cref{t:hah}]
	The function $G_0$ is assumed analytic on $\Re^K$ under (QSA0). 
	Since $h$ is also analytic, this implies $g_0$ is analytic in a neighborhood $\clN_{g_0}$ of the hypercube $[-1,1]^K$.  
	
	The Taylor series expansion in \Cref{t:TaylorSeries} combined with \Cref{t:qsaInt}
	gives
	\begin{equation}
		\begin{aligned}
			h(\qsaprobe_t )&= g_0(0) +    \sum_{n=1}^\infty   \sum_{|\alpha|=n} \frac{1}{\alpha !}  g_0^{(\alpha)} (0)\qsaprobe_t^\alpha  
			\\
			& =     g_0(0) +\sum_{n=1}^\infty   \sum_{|\alpha|=n} \frac{1}{\alpha !}  g_0^{(\alpha)} (0) 2^{-|\alpha|} \sum_{b \in B^\alpha}   \qsaprobe_t^{\alpha,b} 
		\end{aligned}
		\label{e:hahProof1}
	\end{equation}
	Obtaining the mean of each side gives
	\[
	\barh =  g_0(0) +  \sum_{n=1}^\infty   \sum_{|\alpha|=n} \frac{1}{\alpha !}  g_0^{(\alpha)} (0) 2^{-|\alpha|} \sum_{b \in B^\alpha_0}   \qsaprobe_t^{\alpha,b} 
	\]
	where $  \qsaprobe_t^{\alpha,b}  = \exp(2\pi j \phi_{\alpha,b} )$ for $b \in B^\alpha_0$;  similar arguments were used in the derivation of 
	\eqref{e:probeIntDecomposed}.   Subtracting $\barh$ from each side of \eqref{e:hahProof1}
	gives
	\begin{equation}
		\begin{aligned}
			\tilh(\qsaprobe_t )&=       \sum_{n=1}^\infty   \sum_{|\alpha|=n} \frac{1}{\alpha !}  g_0^{(\alpha)} (0) 2^{-|\alpha|} \sum_{b \not\in B^\alpha_0}   \qsaprobe_t^{\alpha,b} 
		\end{aligned}
		\label{e:hahProof2}
	\end{equation}
	The proof of \eqref{e:tilh} is completed on observing that
	\[
	\sum_{b \not\in B^\alpha_0}   \qsaprobe_t^{\alpha,b}  
	= \sum_{b  \in B^\alpha_+}    \qsaprobe_t^{\alpha,b}  +\sum_{b  \in B^\alpha_-}    \qsaprobe_t^{\alpha,b}    
	= \sum_{b  \in B^\alpha_+}  [ \qsaprobe_t^{\alpha,b}  + \qsaprobe_t^{\alpha,-b} ]  =  2  \cos(2\pi[\omega_{\alpha,b} t + \phi_{\alpha,b}])
	\]

	The representation \eqref{e:hahProof2}  for $\tilh$  motivates the following definition:
	\begin{equation}
		\hah(\Phi_t) \eqdef  -
		\sum_{n=1}^\infty   \sum_{|\alpha|=n} \frac{1}{\alpha !}    g_0^{(\alpha)} (0)     \qsaprobe^{\alpha\intI}_t 
		\label{e:hahDef}
	\end{equation}
	whose extension to $\Co^K$ given in \eqref{e:hahDefined} is duplicated here for convenience:
	\[
	\hah(z)   =     -\frac{1}{ 2\pi  }
	\sum_{n=1}^\infty   \sum_{|\alpha|=n} \frac{1}{\alpha !}    g_0^{(\alpha)} (0)      2^{-|\alpha|}   \sum_{b \in B_+^\alpha}  \frac{1}{   j \omega_{\alpha,b}}  [ z^{\alpha,b} 
	-  z^{\alpha,-b}  ]	
	\] 
	The extension follows from the preceding arguments, using $\omega_{\alpha,-b} = -\omega_{\alpha,b} $.

	The remainder of the proof consists of two parts:   show that $\hah$ is analytic in the region $\{z\in\Co^K : 0<\|z\| < r_{g_0}\}$, and then establish the desired properties when $z\in\prstate$.   Those desired properties are firstly   $\ddt \hah(\Phi_t)  = -\tilh(\qsaprobe_t)$,  which follows from   \eqref{e:tilh}   provided the sum in  \eqref{e:hahDef}
	converges absolutely.   The final property is the representation \eqref{e:hah2}
	in terms of  sums of $ \sin(2\pi[\omega_{\alpha,b} t + \phi_{\alpha,b}])$.   This is also immediate since
	\[
	\tfrac{1}{   j }  [ z^{\alpha,b}   	-  z^{\alpha,-b}  ]	 = 2   \sin(2\pi[\omega_{\alpha,b} t + \phi_{\alpha,b}])  \quad \text{when $z= \Phi_t$}
	\]

	\smallskip
	To complete the proof we establish that $\hah$ is analytic in the given domain.
	
	From  \eqref{e:hahDefined} it follows   that $\hah$ is a function of the $2K$-dimensional vector valued function $ v(z) = (z_1,\dots,z_K, z_1^{-1}, \dots, z_K^{-1})$.   The mapping   $z\mapsto v(z)$ is analytic in $\Co^K\setminus\{0\}$, so it suffices to obtain the   bound  
	\[
	B_h(r) \eqdef 
	\sum_{n=1}^\infty   \sum_{|\alpha|=n} \frac{1}{\alpha !}   | g_0^{(\alpha)} (0)  |     2^{-|\alpha|}   \sum_{b \in B_+^\alpha}  \omega_{\alpha,b}^{-1} <\infty 
	\,,\qquad \text{ for  all  $ r < r_{g_0}$. }
	\]
	It will follow that $\hah$ is analytic in the set $\{ z\in\Co^K : 0< \|z\|< r_{g_0} \}$.
	
	\Cref{t:bug18-1.8} gives the bound  $  \omega_{\alpha,b}^{-1}  \le  3^C + n^C$  with $n=|\alpha|$.   Consequently, for any $r>0$,
	\begin{equation}
		\begin{aligned}
			B_h(r) 
			& \le   \sum_{n=1}^\infty  r^n [3^C + n^C]\sum_{|\alpha|=n} \frac{1}{\alpha !}   | g_0^{(\alpha)} (0) | 
		\end{aligned} 
		\label{e:Bh_hah}
	\end{equation}
	The right hand side is finite  for any $r< r_{g_0}$ due to the bound \eqref{e:absTS}.   
\end{proof}


We turn next to the proof of \Cref{t:hah-g-orth}, which will follow from a sequence of lemmas.

\begin{lemma}
	\label[lemma]{t:hah-g-orth-analytic}
	Suppose that the functions $g,h\colon\Re^m\to\Re$ are analytic in a neighborhood $\clN_h$ of the hypercube $[-1,1]^m$, and suppose that $\qsaprobe_t$ is the $m$-dimensional probing signal \eqref{e:qSGD_probe0} satisfying (QSA0).
	Then,   $\langle g(\qsaprobe) ,\hah(\Phi) \rangle =0$,  where $\hah$ is given in \eqref{e:hahDefined}.   
	\notes{intentionally did not use   \eqref{e:hah2} here, but reference to this eqn below is perfect.  }
\end{lemma}

\begin{proof}
	Given the representation \eqref{e:hah2} it suffices to show that   $\langle \gamma ,\zeta \rangle = 0$ for the functions  defined by $\gamma_t = \cos(2\pi[\omega_{\alpha,b} t + \phi_{\alpha,b}])$   and $\zeta_t = \sin(2\pi[\omega_{\alpha',b'} t + \phi_{\alpha',b'}])$,    with $(\alpha, b)$, $(\alpha', b')$ arbitrary pairs appearing in the sum that represents $\hah$.   
	
	If $\omega_{\alpha,b} \neq \omega_{\alpha',b'}$ then the conclusion $\langle \gamma, \zeta \rangle = 0$ is immediate (including the case $\omega_{\alpha,b} =0$).
	
	If $\omega_{\alpha,b} = \omega_{\alpha',b'}$ it follows from the definition \eqref{e:omega_ab} that $\phi_{\alpha,b} = \phi_{\alpha',b'}$, and the conclusion $\langle \gamma, \zeta \rangle = 0$ follows from the  
	double angle identity:
	\[
	\gamma_t \zeta_t   =   \sin(4\pi[\omega_{\alpha,b} t + \phi_{\alpha,b}])/2  
	\]
\end{proof}  

The next result is required for approximating $h$ and $\hah$ simultaneously by analytic functions.  Denote for $\epsy>0$,
\begin{equation}
	\hah^\epsy(z) = \int_0^\infty   e^{-\epsy t}  \tilh (e^{Wt} z)\, dt\,,\quad z\in\prstate
	\label{e:hah-epsy}
\end{equation}

\begin{lemma}
	\label[lemma]{t:hah-epsy}
	Under the assumptions of \Cref{t:hah-g-orth} we have
	\[
	\begin{aligned}
		&\hah^\epsy(z) = \hah(z)   - \epsy  \int_0^\infty   e^{-\epsy t}  \hah (e^{Wt} z)\, dt
		\\
		& \lim_{\epsy\downarrow 0} \max_{z\in\prstate} |\hah^\epsy(z) - \hah(z) | =0      
	\end{aligned} 
	\]
\end{lemma}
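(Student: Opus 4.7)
The plan is to prove the two displayed statements in sequence; the first is a single integration by parts, while the second relies on a second application of \Cref{t:hah}.

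\emph{Step 1 (first identity).} Write $\Phi_t^z \eqdef e^{Wt}z$. By Poisson's equation (\eqref{e:Poisson} in differentiated form, consistent with the series derivation \eqref{e:tilh}--\eqref{e:hah2}) we have $\ddt \hah(\Phi_t^z) = -\tilh(G(\Phi_t^z))$. Substituting into \eqref{e:hah-epsy} and integrating by parts gives
\begin{equation*}
\hah^\epsy(z) \;=\; -\!\int_0^\infty e^{-\epsy t}\,\ddt\hah(\Phi_t^z)\, dt
\;=\; -\bigl[e^{-\epsy t}\hah(\Phi_t^z)\bigr]_0^\infty - \epsy\!\int_0^\infty e^{-\epsy t}\hah(\Phi_t^z)\, dt
\;=\; \hah(z) - \epsy\!\int_0^\infty e^{-\epsy t}\hah(\Phi_t^z)\, dt,
\end{equation*}
where the boundary term at $\infty$ vanishes because $\hah$ is continuous on the compact torus $\Upomega$ and hence bounded.

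\emph{Step 2 (uniform convergence).} Define $H_T(z) \eqdef \int_0^T \hah(\Phi_s^z)\, ds$. The key point is that $\{H_T(z) : T\ge 0,\; z\in\Upomega\}$ is uniformly bounded. To see this, apply \Cref{t:hah} a second time, now with $\hah$ playing the role of $h$: the function $\hah$ is analytic in a neighborhood of $\Upomega$ (the domain established in the closing part of the proof of \Cref{t:hah} via \eqref{e:hahDefined}) and has zero mean by \eqref{e:PoissonBaker}, which are precisely the hypotheses delivering a bounded antiderivative. Equivalently, boundedness of $H_T$ is the second iterated-integral bound already highlighted in \eqref{e:h_double_int}. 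Let $C \eqdef \sup_{T\ge 0, z\in\Upomega} |H_T(z)| < \infty$. A second integration by parts then yields
\begin{equation*}
\epsy \!\int_0^\infty e^{-\epsy t} \hah(\Phi_t^z)\, dt
\;=\; \epsy\bigl[e^{-\epsy t}H_t(z)\bigr]_0^\infty + \epsy^2 \!\int_0^\infty e^{-\epsy t} H_t(z)\, dt
\;=\; \epsy^2 \!\int_0^\infty e^{-\epsy t} H_t(z)\, dt,
\end{equation*}
since $H_0(z)=0$ and $H_t$ is bounded. Combined with Step 1 this gives $|\hah^\epsy(z) - \hah(z)| \le \epsy C$ uniformly in $z\in\Upomega$, which implies the stated limit.

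\emph{Main obstacle.} The only subtle point is the reapplication of \Cref{t:hah} to $\hah$ itself. Formally, the theorem is stated for functions analytic in a neighborhood of the hypercube $[-1,1]^d$, whereas $\hah$ is naturally analytic on the annulus $\{0<\|z\|<r_h\}$ via \eqref{e:hahDefined}. However, all we actually need is the boundedness of $H_T$, not a Taylor expansion centered at the origin. This is obtained by rerunning the Baker-type argument used in \Cref{t:hah} directly on the series \eqref{e:hahDefined}: since the frequency lower bound of \Cref{t:bug18-1.8} yields $|\omega_{\alpha,b}|^{-1} \le 3^C + n^C$, the termwise integration of \eqref{e:hahDefined} converges absolutely by the same argument that produced \eqref{e:Bh_hah}, and the resulting bounded primitive serves as the antiderivative certifying $H_T \in L^\infty(\Upomega \times \Re_+)$.
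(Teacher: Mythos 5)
\textbf{Step 1 matches the paper exactly}: the first identity is a single integration by parts using the Poisson-equation relation $\ddt\hah(\Phi_t^z) = -\tilh(\Phi_t^z)$ and boundedness of $\hah$ on the compact torus, which is precisely the paper's argument.

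\textbf{Step 2 has a genuine gap}, and you have identified the trouble spot yourself without resolving it. The lemma must hold ``under the assumptions of \Cref{t:hah-g-orth},'' where $h$ and $\hah$ are only assumed \emph{continuous}, not analytic. That is exactly the regime where the lemma is subsequently used in the proof of \Cref{t:hah-g-orth} (Stone--Weierstrass is being applied because $h$ has no Taylor series). Your argument for uniform boundedness of $H_T(z) = \int_0^T \hah(\Phi_s^z)\,ds$ invokes the series \eqref{e:hahDefined} and the absolute convergence bound \eqref{e:Bh_hah}, but both are built from the Taylor coefficients $h^{(\alpha)}(0)$. When $h$ is merely continuous, those objects do not exist, so the claim $\sup_{T,z}|H_T(z)| < \infty$ is unjustified, and with it the rate $|\hah^\epsy - \hah| \le \epsy C$. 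The ``Main obstacle'' paragraph restates the issue but does not fix it: ``rerunning the Baker-type argument directly on the series \eqref{e:hahDefined}'' presupposes the series exists.

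The fix is that uniform boundedness of $H_T$ is much more than you need. The standing assumptions give $\langle \hah\rangle = 0$ and $\hah$ continuous, so \Cref{t:arcsinLaw} yields \emph{uniform Cesàro convergence}
\[
\sup_{z\in\Upomega}\Bigl|\tfrac{1}{T} H_T(z)\Bigr| \to 0 \quad\text{as } T\to\infty,
\]
i.e.\ $H_T = o(T)$ uniformly in $z$, together with the trivial bound $|H_T(z)| \le T\|\hah\|_\infty$. A second integration by parts as in your Step 2 gives
\[
\hah^\epsy(z) - \hah(z) = -\epsy^2 \int_0^\infty e^{-\epsy t} H_t(z)\,dt = -\epsy^2\int_0^\infty e^{-\epsy t}\, t\, \Bigl(\tfrac{1}{t}H_t(z)\Bigr)\,dt,
\]
and since $\epsy^2\int_0^\infty e^{-\epsy t} t\,dt = 1$, splitting the integral at a large $T_0$ (using $|\frac1t H_t|\le\delta$ for $t\ge T_0$, and $\epsy^2\int_0^{T_0}e^{-\epsy t}t\,dt\to 0$ as $\epsy\downarrow 0$) gives the uniform limit. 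This is the Abelian theorem the paper is gesturing at when it says the result ``follows from the assumption that the mean of $\hah$ is zero,'' and it works with continuity alone. Your approach delivers a quantitative $O(\epsy)$ rate when $h$ is analytic, which is a genuinely stronger conclusion, but it is not available (and not needed) in the generality the lemma must cover.
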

\begin{proof}
	The first limit follows from the differential representation of the solution to Poisson's equation:   $\hah (e^{Wt} z)$ is absolutely continuous, with  $d \hah (e^{Wt} z)  =  -\tilh  (e^{Wt} z) dt$.
	
	The second limit then follows from the assumption that the mean of $\hah$ is zero.
\end{proof}

\begin{proof}[Proof of \Cref{t:hah-g-orth}]

	\Cref{t:hah-g-orth-analytic} covers the special case in which $g, h$ are analytic.
	
	Consider next the case in which  $g$ is analytic, but $h$ and $\hah$ are only assumed continuous.    Let $\epsy>0$,   $n\ge 1$ be arbitrary,   and apply the Stone-Weierstrass Theorem to obtain a polynomial function $h_n$ satisfying $| h(x) - h_n(x)| \le \epsy/n$ for all $x\in [-1,1]^m$.   It follows from the definition \eqref{e:hah-epsy}
	that $| \hah^\epsy (z) - \hah^\epsy_n(z)| \le 1/n$ for all  $z \in\prstate$.
	
	We then have    
	\[
	\begin{aligned} 
		\langle g(\qsaprobe), \hah(\Phi) \rangle & =   \langle g(\qsaprobe), \hah^\epsy(\Phi) \rangle   + o(1)   
		\\
		&=   \langle g(\qsaprobe) ,\hah_n^\epsy(\Phi) \rangle   + o(1)  + O(\| g\|_\infty/n)   
		\\
		&=   \langle g(\qsaprobe) ,\hah_n(\Phi) \rangle   + o(1)  + O(\| g\|_\infty/n) + o_n(1)  
	\end{aligned} 
	\]
	with $ \| g\|_\infty$ the maximum of $|g(x)|$ over $x\in [-1,1]^m$ and $\hah_n$ is a polynomial function defined by \Cref{t:hah} using $h_n$. 
	In the final approximation, $o_n(1)  \to 0$ as $\epsy\downarrow 0$ for each fixed $n$, but the convergence is not necessarily uniform in $n$.   Letting $\epsy\downarrow 0$ gives,
	\[  
	\langle g(\qsaprobe) ,\hah(\Phi) \rangle  =  \langle g(\qsaprobe) ,\hah_n(\Phi) \rangle      + O(\| g\|_\infty/n)    = O(\| g\|_\infty/n)
	\]
	where the second bound holds because $g$ and $h_n$ satisfy the assumptions of \Cref{t:hah-g-orth-analytic}.    It follows that $\langle g ,\hah \rangle  = 0$ since $n\ge 1$ is arbitrary.
	
	The general case is similar but simpler:  apply the Stone-Weierstrass Theorem to obtain a polynomial function $g_n$ satisfying $| g(x) - g_n(x)| \le 1/n$ for all  $x\in [-1,1]^m$.  From the previous bound we have
	\[  
	\langle g(\qsaprobe) ,\hah(\Phi) \rangle  =  \langle g_n(\qsaprobe) ,\hah(\Phi) \rangle      + O(1/n)     = O(1/n)
	\]
	This completes the proof, since $n\ge 1$ is arbitrary.  
\end{proof}

\subsection{Extensions to Discrete Time}

 The extension of \Cref{t:BakerCor} to the discrete time setting is essentially unchanged, though obtaining bounds on the constants is more challenging \cite{wus02,bug18}.
 
 \begin{theorem}
 	\label[theorem]{t:BakerCorDis}   
 	Suppose that the assumptions of \Cref{t:BakerCor}  hold.    Then there is a finite constant $B_f$ independent of the phase values $\{\phi_i\}$ such that
 	\begin{equation}
 		\Bigl|  \frac{1}{N}   \sum_{k=1}^N      \tilh(\qsaprobe_k)        \Bigr|    \le B_f  \frac{1}{N}\,,   \quad N\ge 1 
 		\label{e:BakerBoundQMC-dis}
 	\end{equation}\qed
 \end{theorem}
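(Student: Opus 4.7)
The plan is to mirror the proof of \Cref{t:BakerCor} almost verbatim, replacing integrals of complex exponentials with finite geometric sums and substituting a discrete-time analog of \Cref{t:bug18-1.8} that controls the distance of $\omega_{\alpha,b}$ to the integers, rather than merely from zero. First I would expand $h$ via \Cref{t:TaylorSeries} and apply \Cref{t:qsaInt} to decompose each $\qsaprobe_k^\alpha$ into the complex exponentials $\qsaprobe_k^{\alpha,b} = \exp(2\pi j[\omega_{\alpha,b}k+\phi_{\alpha,b}])$. Subtracting $\barh$ eliminates precisely the $b \in B_0^\alpha$ contributions, leaving
$$\sum_{k=1}^N \tilh(\qsaprobe_k) = \sum_{n=1}^\infty \sum_{|\alpha|=n} \frac{h^{(\alpha)}(0)}{\alpha!}\,2^{-|\alpha|} \sum_{b \notin B_0^\alpha} e^{2\pi j \phi_{\alpha,b}} \sum_{k=1}^N e^{2\pi j \omega_{\alpha,b} k},$$
in which the inner geometric sum is bounded, uniformly in $N$, by $|e^{2\pi j \omega_{\alpha,b}}-1|^{-1} = (2|\sin(\pi \omega_{\alpha,b})|)^{-1}$.

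The central obstacle is a quantitative lower bound on the distance $\|\omega_{\alpha,b}\|_\intgr \eqdef \min_{m\in\intgr}|\omega_{\alpha,b}-m|$: in the continuous setting it sufficed to keep $\omega_{\alpha,b}$ away from zero, but in discrete time $e^{2\pi j\omega_{\alpha,b}}$ collapses to $1$ whenever $\omega_{\alpha,b}$ approaches \emph{any} integer. Before invoking a quantitative bound, one must verify that $B_0^\alpha$ still captures every cancellation, i.e., that no nontrivial $\omega_{\alpha,b}$ is itself a nonzero integer; but if $\omega_{\alpha,b} = m \in \intgr\setminus\{0\}$ held, then $e^m = \prod_i (a_i/b_i)^{n_i^{\alpha,b}} \in \rat$, contradicting the transcendence of $e$. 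Thus under \eqref{e:logFreq} the discrete sample-mean coincides with the continuous-time mean $\barh$. The quantitative distance estimate then follows from the original Baker theorem on linear forms in logarithms of algebraic numbers (as surveyed in \cite{wus02,bug18}), applied to the form $\omega_{\alpha,b}-m = (-m)\cdot 1 + \sum_i n_i^{\alpha,b} \log(a_i/b_i)$ with integer constant term $-m$; this yields a constant $C' > 0$, depending only on $\{a_i,b_i\}$, such that $\|\omega_{\alpha,b}\|_\intgr \geq \beta_\alpha^{-C'}$ once $\beta_\alpha$ is enlarged to dominate $|m|$, which is harmless since $|m| \leq |\omega_{\alpha,b}|+1 = O(|\alpha|)$.

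Assembling the pieces via $|\sin(\pi x)| \geq 2\|x\|_\intgr$ and the crude count $|B^\alpha \setminus B_0^\alpha| \leq 2^{|\alpha|}$, the $2^{-|\alpha|}$ prefactor cancels and one obtains
$$\Bigl|\sum_{k=1}^N \tilh(\qsaprobe_k)\Bigr| \leq \tfrac{1}{2} \sum_{n=1}^\infty \sum_{|\alpha|=n} \frac{|h^{(\alpha)}(0)|}{\alpha!}\, \beta_\alpha^{C'},$$
which is finite by the absolute-convergence bound \eqref{e:absTS}, possibly after shrinking the radius $r_h' < r_h$ slightly to absorb the polynomial factor $\beta_\alpha^{C'}$, exactly as in the argument following \eqref{e:Bh_hah}. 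The resulting constant $K_f < \infty$ is independent of $N$ and of the phase vector $\{\phi_i\}$, since the phases enter only through moduli-one exponentials $e^{2\pi j \phi_{\alpha,b}}$; dividing by $N$ gives \eqref{e:BakerBoundQMC-dis}.
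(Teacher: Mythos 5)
Your proposal is correct and follows essentially the same route as the paper's proof: decompose $\tilh$ via the Taylor series into the exponentials $\qsaprobe_k^{\alpha,b}$, bound each geometric sum by the reciprocal of $|e^{2\pi j\omega_{\alpha,b}}-1|$, and control the distance from $\omega_{\alpha,b}$ to $\intgr$ by a Baker-type estimate for linear forms in logarithms with a rational integer term (the paper's \Cref{t:bug18-2.1}), then conclude by absolute summability of the Taylor coefficients as around \eqref{e:Bh_hah}. Your explicit check that $\omega_{\alpha,b}$ can never be a nonzero integer (via transcendence of $e$) is a detail the paper leaves implicit in its appeal to \Cref{t:bug18-2.1}, and is a nice clarifying addition.
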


 We will see in the proof that the discrete time case is more complex because we require a stronger condition on $\omega_{\alpha,b}$. 
 A useful bound is obtained   from \cite[Thm. 2.1]{bug18}:
 \begin{theorem}
 	\label[theorem]{t:bug18-2.1}
 	Under the assumptions of \Cref{t:BakerCor}  there are constants $\delta, C>0$ depending only on   $\{a_i , b_i : 1\le i\le K\}$  
 	such that whenever $\omega_{\alpha,b}  \neq 0$ and $n_0\in\intgr$,
 	\[
 	|n_0 + \omega_{\alpha,b} |   \ge \delta \beta_\alpha^{-C}\,,  \quad \beta_\alpha = \max\{3, \alpha_1,\dots,\alpha_d\}
 	\]
 	\qed
 \end{theorem}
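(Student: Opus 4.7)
The plan is to reduce the claim to the inhomogeneous form of Baker's theorem on linear forms in logarithms of algebraic numbers. First I would unpack the definition of $\omega_{\alpha,b}$ from \eqref{e:omega_ab}: setting $m_i \eqdef \alpha_i\sum_k b_k^i \in \intgr$, we may write $\omega_{\alpha,b} = \sum_{i=1}^d m_i \log(a_i/b_i)$, with the height estimate $|m_i|\le \alpha_i^2\le \beta_\alpha^2$, and hence $|\omega_{\alpha,b}|\le C_0\, \beta_\alpha^2$, where $C_0$ depends only on $d$ and $\max_i|\log(a_i/b_i)|$.

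Next I would dispose of the trivial regime. If $|n_0+\omega_{\alpha,b}|\ge 1$, the claim holds for any $\delta\le 1$ regardless of $C$. In the nontrivial regime $|n_0+\omega_{\alpha,b}|<1$ forces $|n_0|\le 1 + C_0\,\beta_\alpha^2$, so
\[
B \eqdef \max\{|n_0|,|m_1|,\ldots,|m_d|\} \le C_1\, \beta_\alpha^2
\]
for a constant $C_1$ depending only on $\{a_i,b_i\}$.

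The heart of the argument is a direct appeal to \cite[Thm.~2.1]{bug18}, which is precisely the quantitative Baker bound for a linear form $b_0+\sum b_i\log\alpha_i$ with algebraic $\alpha_i$ and integer $b_0,\ldots,b_n$. Applied to the linear form $\Lambda \eqdef n_0 + \sum_{i=1}^d m_i\log(a_i/b_i) = n_0+\omega_{\alpha,b}$, with the algebraic (rational) base numbers $\{a_i/b_i\}$, it produces a bound of the form $|\Lambda|\ge c_1 B^{-c_2}$, where $c_1,c_2>0$ depend only on $\{a_i,b_i\}$. Combining with $B\le C_1\,\beta_\alpha^2$ yields
\[
|n_0+\omega_{\alpha,b}|\;\ge\; c_1 (C_1\,\beta_\alpha^2)^{-c_2} \;=\; \delta\,\beta_\alpha^{-C},
\]
on setting $\delta \eqdef c_1 C_1^{-c_2}$ and $C \eqdef 2c_2$. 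This is the stated inequality.

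The one point requiring care is the hypothesis $\Lambda\ne 0$ needed to invoke Baker's bound. If $\Lambda = 0$ then $\omega_{\alpha,b}=-n_0\in\intgr$, i.e.\ $\prod_i(a_i/b_i)^{m_i}=e^{-n_0}$. The left-hand side is rational, while $e^{-n_0}$ is rational only for $n_0=0$; but $n_0=0$ together with $\omega_{\alpha,b}=-n_0$ would give $\omega_{\alpha,b}=0$, contradicting the standing hypothesis. There is thus no genuine obstacle beyond invoking Baker's theorem as a black box. Since the paper already relies on \cite{bug18} for precisely this style of Diophantine bound (cf.\ the parallel use of \cite[Thm.~1.8]{bug18} in \Cref{t:bug18-1.8} for the continuous-time case), the proof reduces to the short bookkeeping above.
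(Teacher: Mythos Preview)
Your proposal is correct and matches the paper's approach: the paper offers no proof at all beyond the citation ``follows from \cite[Thm.~2.1]{bug18}'' (note the \qed immediately after the statement), so you have in fact supplied the bookkeeping --- bounding the integer coefficients by $\beta_\alpha^2$, disposing of the trivial regime $|n_0+\omega_{\alpha,b}|\ge 1$, and verifying $\Lambda\neq 0$ via transcendence of $e^{-n_0}$ --- that the paper omits.
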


 \begin{proof}[Proof of \Cref{t:BakerCorDis}]
 	Denote the partial sums,
 	\[
 	S_N = \sum_{k=0}^N \tilh(\qsaprobe_k)  
 	\]
 	Motivated by the foregoing, to bound the sum we consider sums of the primitives obtained from the Taylor series expansion  \eqref{e:hTS}: 
 	\[
 	\begin{aligned}
 		S_N ^{\alpha,b} & \eqdef\sum_{k=0}^N  \exp( 2\pi j [  \omega_{\alpha,b} k + \phi_{\alpha,b}]  ) 
 		\\
 		S_N ^{\alpha} & \eqdef  \sum_{b\not\in B_0^\alpha} S_N ^{\alpha,b} 
 	\end{aligned} 
 	\]
 	The expansion \eqref{e:hTS} tells us that  $S_N$ admits the representation
 	\[
 	S_N =  \sum_{n=1}^\infty   \sum_{|\alpha|=n} \frac{1}{\alpha !}    g_0^{(\alpha)} (0)   S_N ^{\alpha} 
 	\]
 	To prove the theorem, it suffices to establish a uniform bound over $N$ and $\{\phi_i\}$.
 	
 	Consider any $\alpha$ and $b\not\in B_0^\alpha$.   On denoting  $w=  \exp( 2\pi j \omega_{\alpha,b}  )$ we obtain
 	\[
 	S_N ^{\alpha,b}  =
 	\exp( 2\pi j \phi_{\alpha,b} )  \sum_{k=0}^N w^k     = \exp( 2\pi j \phi_{\alpha,b} )  \frac{w^{N+1} -1}{ w-1}
 	\,,\qquad
 	|S_N ^{\alpha,b} |   \le 2\frac{1}{| w-1| }
 	\]
 	Given $| w-1|^2 =  |1-\cos(    2\pi \omega_{\alpha,b} )|^2  + \sin( 2\pi \omega_{\alpha,b} )^2  $ it is not enough to bound 
 	$| \omega_{\alpha,b} |$ from zero as in the continuous time case.  Rather, we require a bound on $\inf_{n_0\in\intgr} |n_0+ \omega_{\alpha,b} |$.   \Cref{t:bug18-2.1} gives us the desired bound:   for some $\delta_0>0$ and any $n_0 \in \intgr$,
 	\[
 	\inf_{n_0\in\intgr} |n_0+ \omega_{\alpha,b} |   \ge\delta_0 \beta_\alpha^{-C}
 	\]
 	The remainder of the proof that $\{S_N: N\ge 1\}$ is bounded is identical to the continuous time case (see arguments surrounding \eqref{e:Bh_hah}).
 \end{proof}

	\subsection{The Perturbative Mean Flow Representation}
\label{s:p_mean_proof}
The derivation of the perturbative mean flow representation \eqref{e:BigGlobalODEvanishing} is based upon the solutions to Poisson's equation in \eqref{e:hatDefs}. We start by re-writing the QSA ODE \eqref{e:QSAgen} in terms of the \textit{apparent noise} $\{\tilXi_t\}$,
\[
\ddt \ODEstate_t = a_t [\barf(\ODEstate_t) + \tilXi_t] \, , 
\qquad 
\text{$\tilXi_t \eqdef f(\ODEstate_t,\qsaprobe_t) -  \barf(\ODEstate_t)$}
\]

The proof of the first part of \Cref{t:P-meanflow} follows directly from the next three lemmas.

\begin{lemma}
	\label[lemma]{t:step1_lemma}
	Under (QSA5), the apparent noise $\tilXi_t$ can be expressed as
	\begin{equation}
		\tilXi_t  = -\ddt  \haf(\ODEstate_t,  \Phi_t)   -a_t\Upupsilon(\ODEstate_t,  \Phi_t)
		\label{e:step1}
	\end{equation}
\end{lemma}
\begin{proof}
	Applying \eqref{e:Chainrule_diffgen}  with $h=\haf$, 
	\[
	\ddt  \haf(\ODEstate_t,  \Phi_t)   =   
	\partial_\theta \haf(\ODEstate_t,  \Phi_t)  \ddt \ODEstate_t    - 
	[ f(\ODEstate_t,  \qsaprobe_t)  -\barf(\ODEstate_t)] 
	\]
	This gives
	\[
	\tilXi_t  =   -\ddt  \haf(\ODEstate_t,  \Phi_t) 
	+    
	a_t   \partial_\theta \haf(\ODEstate_t,  \Phi_t) f(\ODEstate_t,  \qsaprobe_t)  
	\]
	Finally, \eqref{e:step1} follows from \eqref{e:Upupsilon}.
\end{proof}

\begin{lemma}
	\label[lemma]{t:step2_lemma}
	Suppose that (QSA1) and (QSA5) hold. If $a_t = (1+t)^{-\rho}$, with $\rho\in(0,1)$ , 
	\begin{equation}
		\ddt \haf(\ODEstate_t, \Phi_t )   =  -r_t a_t   [\Df \hahaf ] (\ODEstate_t, \Phi_t )   
		+ a_t \ddt  [\Df \hahaf ] (\ODEstate_t, \Phi_t ) 
		-  \tfrac{d^2}{dt^2} \hahaf(\ODEstate_t, \Phi_t ) 
		\label{e:step2}
	\end{equation}
	where $r_t = \rho/(t+1)$.
\end{lemma}
\begin{proof}
	Similarly to \Cref{t:step1_lemma}, applying \eqref{e:Chainrule_diffgen}  with $h=\hahaf$ gives
	\[
	\ddt \hahaf  (\ODEstate_t,\Phi_t)    =    
	a_t [\Df \hahaf ] (\ODEstate_t, \Phi_t )      
	-  \haf(\ODEstate_t, \Phi_t )   	
	\]
	Differentiating both sides with respect to $t$ yields
	\[
	\begin{aligned}
		\tfrac{d^2}{dt^2} \hahaf(\ODEstate_t, \Phi_t )  &= 
		\ddt\{ a_t [\Df \hahaf ] (\ODEstate_t, \Phi_t )  \} -  \ddt \haf(\ODEstate_t, \Phi_t )   
		\\
		&= -r_t a_t [\Df \hahaf ] (\ODEstate_t, \Phi_t ) + a_t \ddt [\Df \hahaf ] (\ODEstate_t, \Phi_t ) 
		-  \ddt \haf(\ODEstate_t, \Phi_t )
	\end{aligned}
	\]
	where the last equality follows from the product rule. The result in \eqref{e:step2} is then achieved upon rearranging terms.
\end{proof}

\begin{lemma}
	\label[lemma]{t:step3_lemma}
	Under (QSA5), 
	\begin{equation}
		\Upupsilon(\ODEstate_t, \Phi_t)   = \barUpupsilon(\ODEstate_t) 
		+  a_t [\Df \haUpupsilon ] (\ODEstate_t, \Phi_t ) 
		-\ddt \haUpupsilon(\ODEstate_t, \Phi_t)
		\label{e:step3}
	\end{equation}
\end{lemma}
\begin{proof}
	Again, applying \eqref{e:Chainrule_diffgen}  with $h=\haUpupsilon$,
	\[
	\ddt \haUpupsilon (\ODEstate_t,\Phi_t)    =    a_t [\Df \haUpupsilon ] (\ODEstate_t, \Phi_t )    
	- [  \Upupsilon(\ODEstate_t, \Phi_t)     -   \barUpupsilon(\ODEstate_t)] 	
	\]
	which gives \eqref{e:step3} after rearranging terms.
\end{proof}

We conclude this subsection with the remainder of the proof of  \Cref{t:P-meanflow}~(i).    Proofs of the remaining parts are given after the theorem statement:    \Cref{t:P-meanflow}~(ii) follows from  \Cref{t:hah-g-orth} and the representation  $\Upupsilon_i(\theta,\Phi) =  -\sum_{j=1}^d \haA_{i,j}( \theta, \Phi)  f_j( \theta, \qsaprobe)$,   and  (iii)  follows from \cite[Prop. 4.33 and 4.34]{CSRL}.

\begin{proof}[Proof of part (i) in \Cref{t:P-meanflow}]
	This part  contains the details of the p-mean flow representation
	\eqref{e:BigGlobalODEvanishing_thm}.   It is obtained upon substitution of \eqref{e:step2} and \eqref{e:step3} into \eqref{e:step1}.
\end{proof}

\subsection{Acceleration}
We turn next to analysis of PR and FB techniques.

\wham{Tighter bounds for PR averaging}
\label{s:Up_proofs}
As explained in \Cref{s:QSA_intro}, convergence of the QSA ODE is established by the coupling of $\{\ODEstate_t\}$ and $\{\barODEstate_t\}$ for $t \geq t_0$, for some $t_0$ that depends on the stability properties of $\dot{x} = \barf(x)$. This coupling is used to establish boundedness of the scaled error,
\begin{equation}
	Z_t \eqdef \frac{\ODEstate_t - \barODEstate_t}{a_t}, \quad t \geq t_0
	\label{e:scaled_error}
\end{equation}
Convergence of $\{\barODEstate_t\}$ to $\theta^*$ is typically very fast when $a_t=(1+t)^{-\rho}$ and $\rho<1$:
\begin{lemma}
	\label[lemma]{t:bartheta_fast} Suppose (QSA1) -- (QSA4) hold. If $a_t = (1+t)^{-\rho}$, with $0<\rho<1$, then
	\[  
	Z_t =   \frac{\ODEstate_t - \theta^*}{a_t} + \epsy^{\barODEstate}_t   
	\]
	with $\epsy^{\barODEstate}_t = [\theta^* - \barODEstate_t]/a_t$ vanishing faster than $O(T^{-N})$ for any $N \geq 1$.\qed
\end{lemma}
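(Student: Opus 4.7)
The decomposition itself is immediate algebra: from the definition \eqref{e:scaled_error},
\[
Z_t \;=\; \frac{\ODEstate_t - \theta^*}{a_t} \;+\; \frac{\theta^* - \barODEstate_t}{a_t} \;=\; \frac{\ODEstate_t - \theta^*}{a_t} + \epsy^{\barODEstate}_t,
\]
so the content of the lemma is the super-polynomial decay of $\epsy^{\barODEstate}_t$. My plan is to recognize $\barODEstate_t$ as a \emph{time-changed} orbit of the nominal ODE $\dot\odestate = \barf(\odestate)$, exploit exponential stability near $\theta^*$ coming from (QSA4), and observe that the time-change $\tau(t)$ grows like $t^{1-\rho}$ so the exponential in $\tau$ beats every power of $t$ in the denominator $a_t$.

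Concretely, define $\tau(t) = \int_{t_0}^{t} a_s\,ds = \bigl[(1+t)^{1-\rho} - (1+t_0)^{1-\rho}\bigr]/(1-\rho)$ and $y(\tau) \eqdef \barODEstate_{t^{-1}(\tau)}$. A direct chain-rule computation from \eqref{e:barQSA} shows $\dot y(\tau) = \barf(y(\tau))$ with $y(0) = \ODEstate_{t_0}$. By (QSA3), $y(\tau)\to\theta^*$ as $\tau\to\infty$; combined with the Hurwitz hypothesis in (QSA4), Lyapunov's indirect method supplies $\lambda>0$, $C_1<\infty$ and a neighborhood $B$ of $\theta^*$ such that any trajectory entering $B$ satisfies $\|y(\tau)-\theta^*\|\le C_1 e^{-\lambda(\tau-\tau_B)}$, where $\tau_B$ is the entry time into $B$. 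Since the orbit converges, $\tau_B<\infty$; any a priori boundedness of $\ODEstate_{t_0}$ (as is standard in the QSA framework) makes $\tau_B$ uniform on that bounded set.

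Transferring back to the $t$ variable yields
\[
\bigl\|\barODEstate_t - \theta^*\bigr\| \;\le\; C_2\, \exp\!\Bigl(-\tfrac{\lambda}{1-\rho}(1+t)^{1-\rho}\Bigr),
\]
and therefore
\[
\bigl\|\epsy^{\barODEstate}_t\bigr\| \;=\; \frac{\|\barODEstate_t-\theta^*\|}{a_t} \;\le\; C_2 (1+t)^{\rho}\exp\!\Bigl(-\tfrac{\lambda}{1-\rho}(1+t)^{1-\rho}\Bigr).
\]
Because $1-\rho>0$, the exponential on the right decays faster than any polynomial $(1+t)^{-N}$, giving the claim.

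The only delicate step is the transition between global asymptotic stability (which alone gives no rate) and the local exponential rate from the Hurwitz linearization; this is the place where one actually uses (QSA4). Everything else (the time-change, the growth of $\tau(t)$, and the final comparison $(1+t)^{\rho} e^{-ct^{1-\rho}} = o(t^{-N})$) is a direct calculation.
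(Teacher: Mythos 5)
Your proof is correct. The paper does not actually prove this lemma in-text---it cites it as part of \cite[Prop.~4.26]{CSRL}---so there is no in-paper proof to compare against. The time-change argument you use (passing to the autonomous flow $\dot y=\barf(y)$ via $\tau(t)=\int_{t_0}^t a_s\,ds\asymp(1+t)^{1-\rho}$, then combining the global convergence from (QSA3) with the local exponential stability implied by the Hurwitz linearization $A^\ocp$ in (QSA4) to obtain an $\exp(-c(1+t)^{1-\rho})$ decay for $\|\barODEstate_t-\theta^\ocp\|$, which overwhelms the $(1+t)^\rho$ factor arising from dividing by $a_t$) is the standard mechanism in the QSA literature and almost certainly matches the argument in the cited reference. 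One small simplification: since $t_0$ is fixed in the QSA framework (as the paper states explicitly), the entry time $\tau_B$ into the exponentially-attracting neighborhood is a single constant, and your remark about uniformity over a bounded set of initial conditions is unnecessary. The hypothesis $\rho<1$ is used correctly and essentially: it guarantees $\tau(t)\to\infty$ at a polynomial rate rather than logarithmically, which is exactly what upgrades the final decay from merely polynomial to super-polynomial.
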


This lemma is part of    \cite[Prop. 4.26]{CSRL}.   Combined with  \eqref{e:Couple_ODEstate} it gives  
\begin{equation}
	Z_t = \barY^* -\haf^*_t  + O(a_t) , \quad t \geq t_0
	\label{e:QSAcouple_rho}
\end{equation}

\notes{CL: added \Cref{t:oldPR}
	\\
	OK, but it makes me sad.   The reader will wonder why the pmf isn't the foundation.  I gave it a tweak.}

The next result is  of \cite[Thm.~4.25]{CSRL}.   It is a primitive version of the p-mean flow representation that will serve as a foundation for the proof of \Cref{t:Couple_main}:
\begin{theorem}
	\label[theorem]{t:oldPR} Suppose the assumptions of \Cref{t:Couple_main} hold, then
	\begin{equation} 
		\ODEstatePR_T = \theta^* + a_T c(\rho,\kappa) \barY^* + \mathcal{B}_T/T
		\label{e:PR_1/T}
	\end{equation} 
	where $\{\mathcal{B}_T\}$ is bounded in $T$ and $c(\rho,\kappa)$ is defined in \Cref{t:Couple_main} .\qed
\end{theorem}

The key step in the proof of \Cref{t:Couple_main} is to bound the process $\{\mathcal{B}_T\}$ in \eqref{e:PR_1/T}. It is expressed in \cite[Thm. 4.25]{CSRL} as 
\begin{equation}
	\mathcal{B}_T = [A^\ocp]^{-1} \{ \epsy^{Y}_T - \epsy^{Z}_T + \epsy^{\Upupsilon}_T + \epsy^{a}_T \} 
	\label{e:BddProcessPR}
\end{equation}
where for $Y_t \eqdef Z_t + \haf(\ODEstate_t,\Phi_t)$ (recall \eqref{e:PoissonA1}), $r_t=\rho/(1+t)$ and $\tilUpupsilon^*_t = \Upupsilon^*_t - \barUpupsilon^*$ (recall \eqref{e:hatDefs} and \eqref{e:Upupsilon}),
\begin{equation*}
	\epsy^{Y}_T = Y_T - Y_{T0},
	\qquad  
	\epsy^{Z}_T =  \int_{T_0}^T r_t Z_t \,dt, 
	\qquad
	\epsy^{\Upupsilon}_T = \int_{T_0}^Ta_t \tilUpupsilon^*_t \,dt,
	\qquad
	\epsy^{a}_T = \int^T_{T_0} a_t O(\|\ODEstate_t - \theta^*\| )\,dt
\end{equation*}

We proceed to bound each term.

\begin{lemma}
	\label[lemma]{t:epsyY_bound} Under the assumptions of \Cref{t:Couple_main}, $\epsy^{Y}_T  =O(a_T)$.
	\begin{proof}
		By substitution of \eqref{e:QSAcouple_rho} into the definition of $Y_t$,
		\begin{align*}
			Y_t &= \barY^* - \haf^*_t + \haf(\ODEstate_t, \Phi_t) + O(a_t)  \\
			& =\barY^* + O(\| \ODEstate_t - \theta^*\|) +  O(a_t) =  \barY^*+ O(a_t) 
		\end{align*}
		where the last equalities follow from Lipschitz continuity of $\haf_t(\ODEstate_t, \Phi_t)$ together with the fact that $\|\ODEstate_t - \theta^*\| = O(a_t)$ from \Cref{t:Couple_main}. Now, $a_{T_0} = O(a_T)$ since $T_0 = (1-1/\kappa)T$ and hence
		\[ 
		\epsy^{Y}_T = Y_T - Y_{T0} = O(a_T) 
		\]
	\end{proof}
\end{lemma}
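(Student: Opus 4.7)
The plan is to exploit the decomposition $Y_t = Z_t - \XiI_t(\ODEstate_t)$ together with the sharp description of the scaled error given by \eqref{e:QSAcouple_rho}, namely $Z_t = \barY + \XiI_t + O(a_t)$. The constant vector $\barY$ will appear identically in both $Y_T$ and $Y_{T_0}$, so it cancels in the difference $\epsy^Y_T = Y_T - Y_{T_0}$, leaving only quantities that decay like $a_t$.

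First I would substitute \eqref{e:QSAcouple_rho} into the definition of $Y_t$ to obtain
\[
Y_t = \barY + \XiI_t - \XiI_t(\ODEstate_t) + O(a_t).
\]
Recall from \eqref{e:NoiseInt} that $\XiI_t = \XiI_t(\theta^\ocp)$, and that $\XiI_t(\theta) = \haf(\theta,\Phi_0) - \haf(\theta,\Phi_t)$. By (QSA5)(ii) the map $\theta \mapsto \haf(\theta,\Phi)$ is Lipschitz uniformly in $\Phi\in\prstate$, so $\theta \mapsto \XiI_t(\theta)$ is Lipschitz with a constant independent of $t$. This yields
\[
\|\XiI_t - \XiI_t(\ODEstate_t)\| \le 2\Lip_f\|\ODEstate_t - \theta^\ocp\|.
\]
Next I would invoke \Cref{t:Couple_main} to assert $\|\ODEstate_t - \theta^\ocp\| = O(a_t)$, so that $Y_t = \barY + O(a_t)$.

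Finally, evaluating at $t=T$ and $t=T_0$ and subtracting gives $\epsy^Y_T = O(a_T) + O(a_{T_0})$. Since $T_0 = (1-1/\kappa)T$ with $\kappa > 1$ fixed and $a_t = (1+t)^{-\rho}$, the ratio $a_{T_0}/a_T$ converges to the finite constant $(1 - 1/\kappa)^{-\rho}$, so $a_{T_0} = O(a_T)$ and the desired bound $\epsy^Y_T = O(a_T)$ follows. There is no real obstacle here; the one item worth verifying carefully is the uniform (in $t$) Lipschitz bound on $\XiI_t(\,\cdot\,)$, which is exactly what (QSA5)(ii) supplies.
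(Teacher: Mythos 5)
Your proposal is correct and follows essentially the same route as the paper: substitute the scaled-error expansion \eqref{e:QSAcouple_rho} into $Y_t = Z_t - \XiI_t(\ODEstate_t)$, control the $\XiI$ difference by the uniform Lipschitz property of $\haf(\cdot,\Phi)$ from (QSA5)(ii), invoke $\|\ODEstate_t - \theta^\ocp\| = O(a_t)$, and observe that $a_{T_0}/a_T$ is bounded. The only difference is that you spell out the $2\Lip_f$ Lipschitz bound on $\theta\mapsto\XiI_t(\theta)$ explicitly, which the paper leaves implicit.
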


\begin{lemma}
	\label[lemma]{t:epsyZ_bound} Under the assumptions of \Cref{t:Couple_main},
	\[  
	\epsy^{Z}_T = \epsy^{Z}_\infty + O(a_T), \qquad \epsy^{Z}_\infty =  \barY^*\log(\kappa/(\kappa-1))  \rho 
	\]
\end{lemma}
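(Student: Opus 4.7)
The plan is to substitute the expansion \eqref{e:QSAcouple_rho}, namely $Z_t = \barY + \XiI_t + O(a_t)$, directly into the defining integral
\[
\epsy^{Z}_T = \int_{T_0}^{T} r_t Z_t \, dt, \qquad r_t = \frac{\rho}{1+t},
\]
and treat the three resulting pieces separately. The first piece, $\barY \int_{T_0}^T r_t \, dt = \barY \rho \log\bigl((1+T)/(1+T_0)\bigr)$, is the source of $\epsy^{Z}_\infty$. Since $T_0 = (1-1/\kappa)T$, an elementary computation gives $(1+T)/(1+T_0) = \kappa/(\kappa-1) + O(1/T)$, hence
\[
\barY \rho \log\frac{1+T}{1+T_0} = \barY \rho \log\frac{\kappa}{\kappa-1} + O(1/T) = \epsy^{Z}_\infty + O(a_T),
\]
where the last equality uses $1/T = o(a_T)$ since $\rho < 1$.

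The second piece, $\int_{T_0}^T r_t \XiI_t \, dt$, is handled by integration by parts using the second primitive $\XiII_t$ from \eqref{e:Xii}, which is bounded by (QSA5). Since $\ddt \XiII_t = \XiI_t$ and $r_t' = -\rho/(1+t)^2$, we obtain
\[
\int_{T_0}^T r_t \XiI_t \, dt = \bigl[r_t \XiII_t\bigr]_{T_0}^{T} + \int_{T_0}^T \frac{\rho}{(1+t)^2} \XiII_t \, dt.
\]
Both the boundary term and the remaining integral are $O(1/T)$ by boundedness of $\XiII$, hence $o(a_T)$. The third piece, $\int_{T_0}^T r_t O(a_t) \, dt$, is bounded by $C \rho \int_{T_0}^T (1+t)^{-1-\rho}\, dt = O(T^{-\rho}) = O(a_T)$.

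Adding the three contributions gives $\epsy^{Z}_T = \epsy^{Z}_\infty + O(a_T)$, as claimed. The only place where any care is required is the integration by parts in the $\XiI$ term; everything else is direct evaluation of elementary integrals and Taylor expansion of the logarithm. I do not anticipate any genuine obstacle, since boundedness of $\XiI$ and $\XiII$ is provided by (QSA5) and the expansion of $Z_t$ is provided by \Cref{t:Couple_main} combined with \Cref{t:bartheta_fast}.
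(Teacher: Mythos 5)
Your proof is correct and follows essentially the same route as the paper: substitute $Z_t = \barY + \XiI_t + O(a_t)$, treat the $\barY$ piece as the source of $\epsy^Z_\infty$, handle the $\XiI$ piece by integration by parts against the bounded second primitive $\XiII$, and absorb the $O(a_t)$ piece directly. The one minor difference is a point in your favor: you explicitly account for the fact that $\int_{T_0}^T r_t\,dt = \rho\log\bigl((1+T)/(1+T_0)\bigr)$ only equals $\rho\log(\kappa/(\kappa-1))$ up to an $O(1/T)=o(a_T)$ correction, whereas the paper writes this identity as exact.
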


\begin{proof}
	For $r_t$ as defined below \eqref{e:BddProcessPR} and $a_t,T_0$ as defined by \Cref{t:Couple_main}, we have the following:
	\[ 
	\int_{T_0}^T  r_t \,dt
	= \rho\log(\kappa/(\kappa-1)), \qquad
	\int_{T_0}^T  r_t O(a_t)\,dt
	= O(a_T)
	\]
	The above identities along with the representation of $Z_t$ in \eqref{e:QSAcouple_rho} imply
	\begin{equation}
		\epsy^{Z}_T =  \int_{T_0}^T  r_t[  \barY^* - \haf^*_t + O(a_t) ] \,dt  = \barY^*\log(\kappa/(\kappa-1))  \rho    + \int_{T_0}^T  r_t \haf^*_t  \,dt + O(a_T)
		\label{e:epsyz}
	\end{equation}
	It remains to bound the last integral in the right side of \eqref{e:epsyz}. By integration by parts,
	\begin{equation*}
		\int_{T_0}^T  r_t \haf^*_t \,dt  =     \int_{T_0}^T  r_t  \, d\hahaf^*_t   = r_T \hahaf^*_T      - r_{T_0} \hahaf^*_{T_0} +  \frac{1}{\rho} \int_{T_0}^T  r^2_t  \hahaf^*_t \,dt
	\end{equation*}
	Here, $\hahaf^*_t$ is bounded by assumption in (QSA5) so we have the bound
	\[
	\int_{T_0}^T r^2_t  \hahaf^*_t \,dt \leq \sup_t\| \hahaf^*_t \| \int_{T_0}^T r^2_t \, dt = O(r_T)
	\]
	Thus, $\int_{T_0}^T  r_t \haf^*_t  \,dt = O(r_T)$. The result then follows from substitution of this bound into \eqref{e:epsyz}.
\end{proof}

\begin{lemma}
	\label[lemma]{t:epsyUp_bound} Under the assumptions of \Cref{t:Couple_main}, $ \| \epsy^{\Upupsilon}_T \|= O(a_T)$.
	
	\begin{proof}
		Again applying integration by parts,
		\begin{equation}
			\epsy^{\Upupsilon}_T = - \int_{T_0}^Ta_t \,d\haUpupsilon^*_t  = a_T\haUpupsilon^*_T - a_{T_0}\haUpupsilon^*_{T_0} + \int_{T_0}^T  r_t a_t \haUpupsilon^*_t \,dt
			\label{e:epsyup}
		\end{equation}
		where $\haUpupsilon^*_t$ is as defined by \eqref{e:haUpupsilon}. We have that $\haUpupsilon^*_t$ is bounded by assumption in (QSA5) and we have the bound 
		\[
		\int_{T_0}^T r_t a_t   \haUpupsilon^*_t \,dt \leq \sup_t\| \haUpupsilon^*_t \| \int_{T_0}^T r_t a_t \,dt =  O(a_T)
		\]
		Then, we obtain the desired conclusion by substitution of the above bound into \eqref{e:epsyup}.	
	\end{proof}
\end{lemma}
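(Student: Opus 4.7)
The plan is to exploit the boundedness of $\UpI_t$ guaranteed by (QSA5) via a clean integration-by-parts argument. The crucial observation is that the definition of $\UpI_t$ in \eqref{e:Xii} gives $d \UpI_t = \tilUpupsilon_t \, dt$ (up to an additive term $\bar{\UpI}_t$ whose derivative vanishes in the ergodic sense), so we can trade the oscillatory factor $\tilUpupsilon_t$ for the bounded primitive $\UpI_t$, transferring the time dependence onto the smooth gain $a_t$.

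First I would write
\[
\epsy^{\Upupsilon}_T \;=\; \int_{T_0}^T a_t \tilUpupsilon_t \, dt \;=\; \int_{T_0}^T a_t \, d\UpI_t,
\]
and apply integration by parts to obtain
\[
\epsy^{\Upupsilon}_T \;=\; a_T \UpI_T - a_{T_0} \UpI_{T_0} - \int_{T_0}^T \dot a_t \, \UpI_t \, dt.
\]
Then I would use the identity $\dot a_t = -\rho(1+t)^{-\rho-1} = -r_t a_t$, with $r_t = \rho/(1+t)$, to rewrite the remainder as $\int_{T_0}^T r_t a_t \UpI_t \, dt$, reducing everything to controlling three terms.

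Next I would bound each term. The two boundary terms are immediate: by (QSA5), $\sup_t \|\UpI_t\| < \infty$, so $\|a_T \UpI_T\| = O(a_T)$ and $\|a_{T_0} \UpI_{T_0}\| = O(a_{T_0}) = O(a_T)$, where the last equality uses that $T_0 = (1-1/\kappa)T$ implies $a_{T_0}/a_T$ is bounded by a constant depending only on $\kappa$ and $\rho$. For the integral term I would again pull out $\sup_t \|\UpI_t\|$ and compute
\[
\int_{T_0}^T r_t a_t \, dt \;=\; \rho \int_{T_0}^T (1+t)^{-1-\rho}\, dt \;=\; a_{T_0} - a_T \;=\; O(a_T),
\]
closing the argument.

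The main obstacle, such as it is, is conceptual rather than computational: one must recognize that (QSA5) has been set up precisely to make this integration by parts work, because without boundedness of $\UpI_t$ the factor $a_t$ (which decays only slowly, at rate $t^{-\rho}$ with $\rho<1$) is not integrable against $\tilUpupsilon_t\, dt$ on $[T_0, T]$ to produce the desired $O(a_T)$ rate. Once this is recognized, the remaining steps are routine estimates, and the pattern parallels exactly the earlier proof of \Cref{t:epsyZ_bound}, which handled $\epsy^Z_T$ by the same integration-by-parts technique applied to $\XiII_t$ instead of $\UpI_t$.
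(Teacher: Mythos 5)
Your proof is correct and is essentially the paper's own argument: the same integration by parts $\int_{T_0}^T a_t\, d\UpI_t = a_T\UpI_T - a_{T_0}\UpI_{T_0} + \int_{T_0}^T r_t a_t \UpI_t\, dt$, the same appeal to boundedness of $\UpI_t$ from (QSA5), and the same $O(a_T)$ bound on the remaining integral. You are slightly more explicit than the paper in spelling out $\dot a_t = -r_t a_t$ and the exact evaluation $\int_{T_0}^T r_t a_t\, dt = a_{T_0} - a_T$, but the route is the same; one minor note is that the hedge about $\widebar{\UpI}$ ``vanishing in the ergodic sense'' is unnecessary---by the normalization in \eqref{gh-notaton} it is a genuine constant, so $d\UpI_t = \tilUpupsilon_t\, dt$ holds exactly.
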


\begin{lemma}
	\label[lemma]{t:epsya_bound} Under the assumptions of \Cref{t:Couple_main}, $\epsy^{a}_T = O(T^{1-2\rho})$.
	\begin{proof}
		From \Cref{t:Couple_main}, $\|\ODEstate_t - \theta^*\| = O(a_t)$. Thus,
		\begin{equation*}
			\epsy^{a}_T = 	\int^T_{T_0} a_t O(\|\ODEstate_t - \theta^*\| ) \, dt = \int^T_{T_0} O(a^2_t) \,dt = O(T^{1-2\rho})
		\end{equation*}	
	\end{proof}
\end{lemma}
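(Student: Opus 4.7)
The plan is to chain together two facts already at our disposal: the pointwise error bound from \Cref{t:Couple_main} and an elementary integral estimate. Recall that $\epsy^a_T$ is defined as an integral of $a_t$ against a quantity controlled by $\| \ODEstate_t - \theta^\ocp \|$, and the averaging window in the PR definition is $[T_0, T]$ with $T_0 = (1 - 1/\kappa) T$. The whole argument should be one or two lines in display form, since nothing subtle needs to happen along the way.

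First I would invoke \Cref{t:Couple_main}, which supplies $\| \ODEstate_t - \theta^\ocp \| = O(a_t)$. Substituted into the definition of $\epsy^a_T$, this gives
\begin{equation*}
\epsy^a_T \;=\; \int_{T_0}^{T} a_t \, O(\| \ODEstate_t - \theta^\ocp\|) \, dt \;=\; \int_{T_0}^{T} O(a_t^2) \, dt .
\end{equation*}
Then I would insert $a_t = (1+t)^{-\rho}$ and compute the remaining integral directly. Since $\rho \in (1/2, 1)$, we have $2\rho > 1$, and
\begin{equation*}
\int_{T_0}^{T} (1+t)^{-2\rho} \, dt \;=\; \frac{(1+T_0)^{1-2\rho} - (1+T)^{1-2\rho}}{2\rho - 1}.
\end{equation*}
Because $T_0 = (1 - 1/\kappa) T$, both endpoints are of order $T^{1-2\rho}$, so the whole expression is $O(T^{1-2\rho})$.

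There is no real obstacle here: the only bookkeeping concern is to be sure that the constant hidden in $O(a_t)$ from \Cref{t:Couple_main} is uniform in $t \ge T_0$, which is automatic once $T_0$ is chosen large enough for the coupling argument underlying that theorem to apply. The condition $\rho > 1/2$ is what makes $\int a_t^2 \, dt$ even integrable at infinity, ensuring the bound is meaningful; if $\rho \le 1/2$ the exponent $1-2\rho$ would be nonnegative and the argument would no longer produce vanishing error. No integration by parts is needed (unlike in the bounds for $\epsy^Z_T$ and $\epsy^\Upupsilon_T$) because we are not trying to extract a leading-order constant — we just need a vanishing rate, and the crude pointwise bound $\|\ODEstate_t - \theta^\ocp\| = O(a_t)$ is enough.
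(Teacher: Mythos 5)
Your proposal is correct and follows exactly the paper's argument: invoke \Cref{t:Couple_main} for the pointwise bound $\|\ODEstate_t - \theta^\ocp\| = O(a_t)$, substitute into the definition of $\epsy^a_T$, and integrate $a_t^2 = (1+t)^{-2\rho}$ over $[T_0, T]$. The extra explicit antiderivative computation and the remarks about $T_0 = (1-1/\kappa)T$ and the role of $\rho > 1/2$ simply spell out what the paper leaves implicit.
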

\smallskip
\begin{proof}[Proof of \Cref{t:Couple_main}]
	The proof of \eqref{e:Couple_ODEstate} can be found in \cite[Thm. 4.24]{CSRL}.
	
	Combining \Cref{t:epsyY_bound,t:epsyZ_bound,t:epsyUp_bound,t:epsya_bound}:
	\[   
	\mathcal{B}_T   =  -[A^\ocp]^{-1} \barY^*\log(\kappa/(\kappa-1))  \rho  +O(T^{1-2\rho})   
	\]	
	Then, \eqref{e:PRtheta} follows from substitution of the above representation into \eqref{e:PR_1/T}.	
\end{proof}

\wham{Forward-backward filtering}
We refer to $\qsaprobe_t$ as the forward probing signal, and $\qsaprobe^-_t$ as backward probing.
Similarly to $\qsaprobe^-_t$, we introduce the signal $\Phi^-_t \eqdef \Phi_{-t}$.
The first step in establishing the identity $\barY^{*-} = - \barY^*$ is to show that the solutions to Poisson's equation \eqref{e:haA_def} for the two QSA ODEs differ by a negative sign.

\begin{lemma}		
	\label[lemma]{t:Back_Poisson}
	Suppose that  (QSA5) holds, and let $\haA^{*}$ and $\haA^{*-}$ denote the solutions to Poisson's equation satisfying:
	\[
	\begin{aligned}
		\haA^{*}(  \Phi_{t_0} )  & = 
		\int_{t_0} ^{t_1}   \tilA( \theta^\ocp ,  \qsaprobe_t)   \, dt    + \haA^{*}(  \Phi_{t_1} )  
		\\
		\haA^{*-}(  \Phi_{t_0}^-)  & = 
		\int_{t_0} ^{t_1}   \tilA( \theta^\ocp ,  \qsaprobe_t^-)   \, dt    + \haA^{*-}(  \Phi_{t_1}^- ) \, ,   \quad \text{any  $0\le t_0\le t_1$,}
	\end{aligned} 
	\]
	normalized so that 		 $\langle \haA^{*} \rangle =\langle \haA^{*-} \rangle =0$,  as in \eqref{e:Ameanzero}.
	We then have  $\haA^{*}( z) = -\haA^{*-}( z) $ for $z\in\prstate$.   
\end{lemma}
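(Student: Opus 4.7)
The plan is to perform a change of variables in the defining Poisson equation for $\haA^-$ to relate it directly to the equation defining $\haA^+$, and then exploit ergodicity of $\bfPhi$ on $\Upomega$ together with the zero-mean normalization.

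First I would substitute $s=-t$ in the integral defining $\haA^-$. Using $\qsaprobe_t^- = \qsaprobe_{-t}$ and $\Phi_t^- = \Phi_{-t}$, the equation
\[
\haA^-(\Phi_{t_0}^-) = \int_{t_0}^{t_1} \tilA(\theta^\ocp,\qsaprobe_t^-)\,dt + \haA^-(\Phi_{t_1}^-)
\]
rewrites, after the change of variable and a relabeling $s_0=-t_1\le s_1=-t_0$, as
\[
\haA^-(\Phi_{s_1}) - \haA^-(\Phi_{s_0}) = \int_{s_0}^{s_1}\tilA(\theta^\ocp,\qsaprobe_s)\,ds.
\]
The forward Poisson equation, meanwhile, yields
\[
\haA^+(\Phi_{s_0}) - \haA^+(\Phi_{s_1}) = \int_{s_0}^{s_1}\tilA(\theta^\ocp,\qsaprobe_s)\,ds.
\]
Subtracting these identities gives
\[
\haA^+(\Phi_{s_1}) + \haA^-(\Phi_{s_1}) = \haA^+(\Phi_{s_0}) + \haA^-(\Phi_{s_0})
\]
for every $0\le s_0\le s_1$, so the continuous function $\Psi\eqdef \haA^+ + \haA^-$ is constant along every forward trajectory $\{\Phi_s : s\ge 0\}$.

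Next I would promote this orbit-wise constancy to constancy on all of $\Upomega$. Under the frequency hypothesis \eqref{e:logFreq}, Kronecker's theorem (equivalently, \Cref{t:arcsinLaw} applied to a single orbit) implies that the orbit $\{\Phi_s : s\ge 0\}$ is dense in $\Upomega$; since $\Psi$ is continuous, it must therefore be constant on the whole of $\Upomega$. Finally, the normalization $\langle\haA^+\rangle = \langle\haA^-\rangle = 0$ forces the constant value of $\Psi$ to be zero, yielding $\haA^-(z) = -\haA^+(z)$ for every $z\in\Upomega$.

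The only delicate point will be step two: making sure that the continuity of $\haA^\pm$ combined with density of a single orbit is enough. If one prefers to avoid a topological density argument, an equivalent route is to average the identity against $\uppi$ over a long time window---the mean of $\Psi\circ\Phi_s$ is both the constant value and, by the normalizations, equal to $\langle\haA^+\rangle + \langle\haA^-\rangle = 0$---and then invoke continuity to upgrade the a.e.\ conclusion to pointwise equality on $\Upomega$.
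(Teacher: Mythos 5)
Your proposal is correct and follows essentially the same route as the paper's proof: the paper differentiates the Poisson identities along the reversed trajectory to get $\ddt[\haA^+(\Phi^-_t)+\haA^-(\Phi^-_t)]=0$ and then invokes the normalization, whereas you subtract the integral forms after a change of variables, but the mechanism is identical (and you are in fact more explicit about the step from constancy along an orbit to constancy on all of $\Upomega$). One small slip to fix: after the relabeling $s_0=-t_1\le s_1=-t_0$ with $t_0,t_1\ge 0$, the identity you derive holds for $s_0\le s_1\le 0$, not $0\le s_0\le s_1$; this is harmless because the negative-time orbit is equally dense in $\Upomega$ (and the forward Poisson equation for $\haA^+$ extends to negative times by the same density-plus-continuity argument), but the stated range of $(s_0,s_1)$ should be corrected.
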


\begin{proof} Through a change of variables when $t_0 =0$, $t_1=t$,
	\[
	\haA^-(\Phi^-_t)  =  \int^{-t}_{0} - \tilA( \theta^\ocp ,  \qsaprobe_{\tau})  \,d\tau + \haA^{*-}(\Phi^-_0) 
	\]
	Differentiating each side gives
	\begin{equation*}
		\ddt \haA^{*-}(\Phi^-_t)  =  \tilA( \theta^\ocp ,  \qsaprobe_t^-) = - \ddt 	\haA^{*}(\Phi^-_t)
	\end{equation*}
	That is, $\haA^{*}(z) = -\haA^{*-}(z)+M^\circ$ for a constant matrix $M^\circ \in \Re^{d \times d}$. 
	The conclusion $M^\circ =0$ follows from the assumed normalization on the means of $\haA^{*}$ and $\haA^{*-}$.
\end{proof}

%

Recall from \eqref{e:barY} that $\barUpupsilon^*$ and its analogous quantity $\barUpupsilon^{*-}$ for the QSA ODE with backward probing \eqref{e:Backwards_QSA} can be expressed
\begin{subequations}
	\begin{align}
		\barUpupsilon^*&=- \lim_{T \to \infty} \frac{1}{T} \int^T_{0} \haA^{*}(\Phi_t) f(\theta^\ocp,\qsaprobe_t)\,dt
		\label{e:Upsilon_pos}\\
		\barUpupsilon^{*-}&=- \lim_{T \to \infty} \frac{1}{T} \int^T_{0} \haA^{*-}(\Phi^-_t) f(\theta^\ocp,\qsaprobe^-_t)\,dt
		\label{e:Upsilon_neg}
	\end{align}
\end{subequations}
where $\haA^*$ and $\haA^{*-}$ are as defined by \Cref{t:Back_Poisson}. We now show that the application of \Cref{t:Back_Poisson} and \Cref{t:arcsinLaw} to \eqref{e:Upsilon_pos} and \eqref{e:Upsilon_neg} leads to the proof of \Cref{t:FB_ROC}.
\smallskip
\begin{proof}[Proof of \Cref{t:FB_ROC}]
	From \Cref{t:Couple_main}, the following holds for $\rho \in (1/2,1)$:
	\begin{align}
		\ODEstateFB_T &= \half  [\ODEstatePR_T+\ODEstatePRm_T] \nonumber\\
		&= \theta^\ocp+  \half  {a_T c(\kappa,\rho)} (\barY^*+\barY^{*-})+ \half  (\mathcal{B}_T + \mathcal{B}^-_T)/T
		\label{e:FB_eq}
	\end{align}
	An application of the backwards in time LLN in \Cref{t:arcsinLaw} with $h(\qsaprobe_t) =-\haA^{*}(\Phi_t) f(\theta^\ocp,\qsaprobe_t)$ along with \eqref{e:Upsilon_pos} yields
	\begin{equation*}
		\barUpupsilon^* =- \lim_{T \to \infty} \frac{1}{T} \int^T_{0} \haA^{*}(\Phi^-_t) f(\theta^\ocp,\qsaprobe^-_t)\,dt
	\end{equation*}
	Then, by \Cref{t:Back_Poisson}, we get that $\haA^{*}(\Phi^-_t) f(\theta^\ocp,\qsaprobe^-_t) = -\haA^{*-}(\Phi^-_t) f(\theta^\ocp,\qsaprobe^-_t)$. Together with \eqref{e:Upsilon_neg} we obtain
	\begin{equation*}
		\barUpupsilon^{*-} = -\lim_{T \to \infty} \frac{1}{T} \int^T_{0} \haA^{*-}(\Phi^-_t) f(\theta^\ocp,\qsaprobe^-_t)\,dt=  \lim_{T \to \infty} \frac{1}{T} \int^T_{0} \haA^{*}(\Phi^-_t) f(\theta^\ocp,\qsaprobe^-_t)\,dt = - \barUpupsilon^* 
	\end{equation*}
	which implies $\barY^{*-} =-\barY^*$ via \eqref{e:barY}. Combining this result with \eqref{e:FB_eq} completes the proof.
\end{proof}
	
	\subsection{QSA applied to Gradient-Free Optimization}
\label{s:GFO_proofs}
Recall that $\Obj: \Re^d \to \Re$ is strongly convex if there exists a constant $\delta_0>0$ such that:
\begin{equation}
	\delta_0 \| \theta -\theta^0\| ^2\leq 	(\nabla\Obj(\theta) -\nabla\Obj(\theta^0))\cdot (\theta -\theta^0) \, , \quad \forall \,\theta, \theta^0 \in \Re^d
	\label{e:strong_convex_def}
\end{equation}

The bias bound in \Cref{s:GFO_intro} is a corollary of the fact that $1$qSGD and $2$qSGD have identical average vector fields under mild assumptions on $\bfqsaprobe$. This follows from \Cref{t:3sin} and \Cref{t:samefbar}.

\begin{lemma}
	\label[lemma]{t:3sin}
	Suppose $\bfqsaprobe$ is defined using \eqref{e:qSGD_probe},  with $\{\omega_i\}$ distinct. Then,
	for any constant matrix $M \in \Re^{d \times d}$,   
	\[
	\langle\qsaprobe \qsaprobe^\transpose M \qsaprobe\rangle = 0
	\]
\end{lemma}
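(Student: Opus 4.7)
The plan is to expand the cubic form coordinate-wise into triple products of cosines, apply the product-to-sum identity, and kill every term under the time average using linear independence of the frequencies $\{\omega_i\}$ over $\mathbb{Q}$---the standing assumption \eqref{e:logFreq} that underlies all the ergodic-type results in this paper (in particular \Cref{t:BakerCor}).

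First, I would fix an index $p$ and write
\[
(\qsaprobe_t \qsaprobe_t^\transpose M \qsaprobe_t)_p \;=\; \sum_{q,r} M_{qr}\, \qsaprobe^p_t \, \qsaprobe^q_t \, \qsaprobe^r_t .
\]
Substituting $\qsaprobe^j_t = \sum_i v^i_j \cos(2\pi(\omega_i t + \phi_i))$ from \eqref{e:qSGD_probe} and writing $c_\ell(t) = \cos(2\pi(\omega_\ell t + \phi_\ell))$, the time average of the $p$-th component reduces to a finite linear combination of scalar averages $\langle c_i c_j c_k \rangle$ ranging over index triples $(i,j,k)$.

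Next, the standard triple product-to-sum identity
\[
c_i(t)\, c_j(t)\, c_k(t) \;=\; \tfrac{1}{4}\!\!\sum_{\varepsilon_1,\varepsilon_2 \in \{\pm 1\}}\!\! \cos\!\bigl(2\pi\bigl[(\omega_i + \varepsilon_1 \omega_j + \varepsilon_2 \omega_k)\,t + (\phi_i + \varepsilon_1 \phi_j + \varepsilon_2 \phi_k)\bigr]\bigr)
\]
writes each triple product as four sinusoids. A sinusoid $\cos(2\pi(\omega t + \phi))$ has vanishing time average iff $\omega \neq 0$, so it suffices to show that none of the combinations $\omega_i + \varepsilon_1 \omega_j + \varepsilon_2 \omega_k$ is zero.

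This reduces to a short case analysis on how many of the indices $i, j, k$ coincide, combined with \eqref{e:logFreq}. If all three are distinct, the expression is an integer combination of three distinct $\omega_\ell$'s with coefficients $\pm 1$, so it is nonzero by linear independence. If exactly two coincide, the coefficient on the doubled frequency is $0$ or $\pm 2$, while the coefficient on the other frequency is $\pm 1$; at least one nonzero coefficient thus remains. If all three indices coincide, the combination reduces to $n\,\omega_i$ with $n \in \{\pm 1, \pm 3\}$, which is nonzero since each $\omega_i > 0$. In every case $\langle c_i c_j c_k \rangle = 0$, and summing back over $q, r$ and $i, j, k$ gives $\langle \qsaprobe \qsaprobe^\transpose M \qsaprobe \rangle_p = 0$ for each $p$.

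There is no real obstacle: the only mildly fiddly part is the three-way case split above, and everything else is a direct application of standard trigonometric identities and of the linear independence assumption that is already in force for the surrounding theory. As an aside, in the specific setting of \Cref{t:samefbar} where $\phi_i = 1/4$, a one-line proof is available: then $\qsaprobe_t = -\sum_i v^i \sin(2\pi \omega_i t)$ satisfies $\qsaprobe_{-t} = -\qsaprobe_t$, so the time-reversal identity \eqref{e:time_reversal_stats} from \Cref{t:arcsinLaw}, applied to the odd cubic form $h(\qsaprobe) = \qsaprobe \qsaprobe^\transpose M \qsaprobe$, gives $\langle h(\qsaprobe) \rangle = \langle h(\qsaprobe_{-\cdot}) \rangle = -\langle h(\qsaprobe) \rangle$, hence zero.
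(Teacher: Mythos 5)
Your main argument is correct and takes a genuinely different route from the paper. You expand the cubic form directly into triple cosine products, apply the product-to-sum identity, and carry out a case analysis on the frequency combinations $\omega_i + \varepsilon_1\omega_j + \varepsilon_2\omega_k$, using the rational independence \eqref{e:logFreq} and positivity of the $\omega_i$ to show none of these vanish (in the triple-coincidence case, $1+\varepsilon_1+\varepsilon_2 \in \{-1,1,3\}$, never zero), so every resulting sinusoid time-averages to zero. The paper instead gets the result in one line from \Cref{t:arcsinLaw}: the time average equals $\Expect[h(VX)]$ with $X$ having i.i.d.\ arcsine components, and since the arcsine law is symmetric ($X \eqdist -X$) while $h(\qsaprobe) = \qsaprobe\qsaprobe^\transpose M\qsaprobe$ is odd in $\qsaprobe$, the expectation vanishes. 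Your route is more elementary and makes the dependence on the frequency condition explicit; the paper's is slicker once the proposition is available, and is phase-independent. Your concluding aside is close in spirit to the paper's actual proof but not identical: you invoke the time-reversal identity \eqref{e:time_reversal_stats} together with the oddness of $t\mapsto\qsaprobe_t$, which holds only when $\phi_i = 1/4$, whereas the paper's distributional-symmetry argument needs no restriction on the phases and thus establishes the lemma in the generality actually stated. Both of your lines of reasoning are sound; the first is your own elementary proof, the second recovers the paper's idea in a special case.
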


\begin{proof}
	We apply \Cref{t:arcsinLaw}.   The arcsine law used in the proposition is odd, meaning that $\qsaprobe $ and $-\qsaprobe$ have the same steady-state distribution, giving 
	$\langle\qsaprobe \qsaprobe^\transpose M \qsaprobe\rangle 
	= -\langle\qsaprobe \qsaprobe^\transpose M \qsaprobe\rangle =0 
	$.
\end{proof}

\smallskip

\begin{proof}[Proof of \Cref{t:samefbar}] 
	Following the proof of \Cref{t:3sin}, $\qsaprobe$ and $-\qsaprobe$ have the same steady-state distribution by \Cref{t:arcsinLaw}, which gives
	\[  
	\langle\fq(\theta,\qsaprobe)\rangle  =  -\frac{1}{\epsy}\langle \qsaprobe \Obj(\theta+\epsy\qsaprobe)\rangle 
	=  \frac{1}{\epsy} \langle \qsaprobe \Obj(\theta-\epsy\qsaprobe) \rangle 
	= \langle\fq(\theta,-\qsaprobe)\rangle 
	\] 
	This implies equality of the average vector fields for $\fq$ and $\fqq$:
	\begin{align*}  
		\barf(\theta)  = \langle\fqq(\theta,\qsaprobe)\rangle &=  -\frac{1}{2\epsy} \langle\qsaprobe \Obj(\theta+\epsy\qsaprobe) \rangle + \frac{1}{2\epsy} \langle \qsaprobe \Obj(\theta-\epsy\qsaprobe)\rangle \\
		&= -\frac{2}{2\epsy}\langle \qsaprobe \Obj(\theta+\epsy\qsaprobe) \rangle  =  \langle\fq(\theta,\qsaprobe) \rangle
	\end{align*}
	Now, by a second order Taylor series expansion of $\fq(\theta,\qsaprobe_t)$ around $\theta$,
	\[
	\fq(\theta,\qsaprobe_t) = -\frac{1}{\epsy}\qsaprobe_t\Obj(\theta)- \qsaprobe_t \qsaprobe_t^\transpose \nabla\Obj(\theta)- \frac{\epsy}{2} \qsaprobe_t \qsaprobe_t^\transpose \nabla^2\Obj(\theta)\qsaprobe_t + O(\epsy^2)
	\]
	Taking the mean of each side yields
	\[
	\barf(\theta) = - \Sigmaqsa \nabla\Obj(\theta)- \frac{\epsy}{2} \langle \qsaprobe \qsaprobe^\transpose \nabla^2\Obj(\theta)\qsaprobe \rangle + O(\epsy^2)
	\]
	We have $ \langle \qsaprobe \qsaprobe^\transpose \nabla^2\Obj(\theta)\qsaprobe \rangle =0$ by \Cref{t:3sin}, which gives \eqref{e:barf_qsgd}.
\end{proof}

\smallskip
The proof of \Cref{t:bias_qSGD1_strconvx} then follows from applying the results of \Cref{t:samefbar} to \eqref{e:strong_convex_def}.

\begin{proof}[Proof of \Cref{t:bias_qSGD1_strconvx}]
	Since $\Obj$ satisfies \eqref{e:strong_convex_def} and $\nabla\Obj(\thetaopt)=0$, we achieve the following for $\delta>0$:
	\begin{align*}
		\delta \| \theta^{\ocp} -\thetaopt\| ^2 &\leq	(\nabla\Obj(\theta^{\ocp} ) -\nabla\Obj(\thetaopt))\cdot (\theta^{\ocp}  -\thetaopt) \\
		\delta \| \theta^{\ocp}  -\thetaopt\| ^2 & \leq \| \nabla\Obj(\theta^{\ocp})\| \, \| \theta^{\ocp} -\thetaopt\| 
	\end{align*} 
	We have that $\barf(\theta^*) =0$ by (QSA3), which implies $\nabla\Obj(\theta^{\ocp}) = O(\epsy^2) $ from \eqref{e:barf_qsgd} under the assumption that $\Sigmaqsa > 0$. Thus,
	\[
	\| \theta^{\ocp}  -\thetaopt\|   \leq   O(\epsy^2)
	\]
\end{proof}

	\end{document}